\documentclass[10pt, a4paper, reqno]{amsart}
\numberwithin{equation}{section}
\usepackage{amsmath,amssymb,amsthm}
\usepackage{xcolor,textcomp}
\usepackage{graphicx, epstopdf}
\usepackage{braket,amsfonts}
\usepackage{dsfont}
\usepackage{mathtools}
\usepackage{mwe}
\usepackage[top=2cm,bottom=2cm,left=2cm,right=2cm]{geometry}
\usepackage{hyperref}
\hypersetup{colorlinks=true,linkcolor=red,citecolor=blue,filecolor=magenta,urlcolor=cyan} 
\usepackage[capitalise]{cleveref}
\usepackage{enumitem}
\usepackage{comment}
\usepackage{tikz}
\usepackage{pgfplots}
\usepackage{stmaryrd}
\usepackage[mathscr]{euscript}
\usepackage[sort,nocompress]{cite}

\usepackage{esint}

\newtheorem{theorem}{Theorem}[section]
\newtheorem{lemma}[theorem]{Lemma}
\newtheorem{remark}[theorem]{Remark}

\newtheorem{proposition}[theorem]{Proposition}

\DeclareMathOperator*{\esssup}{ess\,sup}

\DeclareMathOperator{\re}{Re}

\newcommand\odd{\textnormal{odd}}
\newcommand\B{\mathcal{B}}

\newcommand\A{\mathcal{A}}
\newcommand\C{\mathcal{C}}
 
\newcommand\V{\mathcal{V}}
\newcommand\Ss{\mathcal{S}}
\newcommand\Y{\mathcal Y}

\newcommand \iintq{\iint_{Q_T}}
\newcommand \veps{\varepsilon}

\newcommand{\avint}{\mathop{\,\rlap{--}\!\!\int_{\Omega}}}

\newcommand{\vphi}{\varphi}

\makeatletter
\@namedef{subjclassname@2020}{%
	\textup{2020} Mathematics Subject Classification}
\makeatother

\def\dx{\,\textnormal{d}x}
\def\dt{\textnormal{d}t}
\def\d{\,\textnormal{d}}

\title[Controls for parabolic-elliptic nonlocal systems]{Controllability issues for parabolic-elliptic systems involving nonlocal couplings}

\author[K. Bhandari,   V. Hern\'andez-Santamar\'ia]{Kuntal Bhandari$^*$ \and V\'ictor Hern\'andez-Santamar\'ia$^\dagger$}
\thanks{$^*$The work of K. Bhandari is supported by  the Czech-Korean project GA\v{C}R/22-08633J and the  Praemium Academiae of {\v{S}}{\'a}rka Ne{\v{c}}asov{\'{a}}, Institute of Mathematics of the Czech Academy of Sciences (Praha, Czech Republic).}
\thanks{$^\dagger$The work of V. Hern\'andez-Santamar\'ia is supported by the program ``Estancias Posdoctorales por México para la Formación y Consolidación de las y los Investigadores por México'' of CONAHCYT (Mexico). He also received support from UNAM-DGAPA-PAPIIT grants IN109522, IN104922, and IA100324 (Mexico).}

\keywords{Distributed controls, moments method, Carleman inequalities, observability, fixed point theorem}

\subjclass{35M33, 93B05, 93B07, 93C20}

\date{\today}

\begin{document}
	\maketitle
	\begin{abstract}
		This work addresses controllability properties for some systems of partial differential equations in which the main feature is the coupling through nonlocal integral terms. In the first part, we study a nonlinear parabolic-elliptic system arising in mathematical biology and, using recently developed techniques, we show how Carleman estimates can be directly used to handle the nonlocal terms, allowing us to implement well-known strategies for controlling coupled systems and nonlinear problems. In the second part, we investigate fine controllability properties of a 1-d linear nonlocal parabolic-parabolic system. In this case, we will see that the controllability of the model can fail and it will depend on particular choices and combinations of local and nonlocal couplings.
	\end{abstract}

\section{Introduction}

\subsection{Motivation}

Chemotaxis is a biological phenomenon in which bacteria (or other types of living organisms) move in a media according to the concentration of certain chemical substances. Since  1970s, originated from the pioneering work of Keller and Segel \cite{KS70}, many models have been proposed to describe this phenomenon and to analyze its main qualitative features such as global existence, pattern formation, blow-up, and stability. 

The models used to describe chemotaxis can be classified according to the nature of the differential equations involved. The election depends mainly on the choice of the chemoattractant, that is, the chemical substance that stimulates the movement of the organisms. Among the models available in the literature, we can identify three main categories: parabolic-parabolic systems, parabolic-elliptic, and parabolic-ODE ones. Without being too exhaustive, we refer to \cite{JL92,HMV97,Nag01,BB09,Lem13,NT13,CLW20} for a wide variety of analytical results related to those kind of systems. We also refer to the survey \cite{HP09} and the references therein for an accessible introduction to this topic and compendium of results. 

\subsection{Statement of the problem and first results}

Inspired in the work \cite{NT13}, the first goal of this paper is to analyze controllability properties of a chemotaxis-like model consisting of two parabolic equations and an elliptic one coupled through nonlocal terms. 

In more detail, let $\Omega\subset \mathbb R^N$ for  $N \in \{1,2,3\}$ be a nonempty open bounded set of class $\C^2$  and $T>0$ be given. Denote $Q_T:=(0,T)\times \Omega$, $\Sigma_T := (0,T)\times \partial \Omega$ and assume that $\omega \subset \Omega$ is a nonempty open  set (typically small).  We consider the following system of pdes: 
	\begin{align}\label{System-main}
		\begin{dcases}
		y_t - \Delta y  =  - \chi_1 \nabla \cdot (y \nabla w) 
		+f_1\left(y,z, \avint y, \avint z  \right)  + u\mathds{1}_\omega 
		  &\text{in } Q_T, \\
	z_t - \Delta z = 
	  a y +  b \avint y    + c w - \chi_2 \nabla \cdot (z \nabla w) +
f_2\left(y,z,\avint y, \avint z\right)  + v\mathds{1}_\omega     &\text{in } Q_T, 
		\\
	-\Delta w + \kappa w =  d_1 y + d_2 z   &\text{in } Q_T, 
	  		\\
	y = z = w = 0  &\text{on } \Sigma_T, 
				\\
	(y,z)(0, \cdot) = (y_0, z_0)  &\text{in } \Omega,
		\end{dcases}
	\end{align}
where $(y_0,z_0)$ is the given initial data,  $u, v$ are the control functions (to be determined) localized in the set $\omega$, and $a, b, c, d_1, d_2 \in \mathbb R$,  and $\chi_1, \chi_2, \kappa>0$ are constants.

Here, and in the sequel, 
$\displaystyle \avint \phi$  denotes    the average integral of $\phi$  in $\Omega$, i.e., 
\begin{align*}
	\avint \phi = \frac{1}{|\Omega|} \int_{\Omega} \phi, \quad \text{for any }  \ \phi \in L^1(\Omega).
\end{align*}
Further, the functions $f_j \in \C^1(\mathbb R^4; \mathbb R)$ are taken of the form 
\begin{align} 
\begin{dcases}\label{nonlinear-func}
	f_1 \left(y, z, \avint  y,  \avint z\right) =  \beta_1 \left( y^2 + yz
	+ y\avint y + y  \avint z   \right) , \\
f_2 \left(y, z, \avint  y,  \avint z\right) =    \beta_2\left(  z^2 + yz +    z \avint y + z \avint z   \right) .
\end{dcases}
\end{align}
with $\beta_j\in L^\infty(Q_T)$ for $j=1,2$.

Following \cite{NT13}, $y$ and $z$ can be understood as the density of the population of two living organisms confined in a region $\Omega$, which satisfy a parabolic equation with constant diffusion and constant chemotactic sensitivity $\chi_1$ and $\chi_2$. On the other hand, $w$ represent the chemoattractant concentration which behaves as an elliptic equation due to convenient mathematical simplifications based on the fact that chemicals diffuse much more faster than living species (see \cite{HP09}). The nonlocal terms appearing in \eqref{System-main} are used to describe the influence of  total mass of the species in the growth of the population and have been introduced with great success for modeling purposes, see for instance \cite{JL92,SRLC09,NT13}.

The control question that we ask consists in determining if \eqref{System-main} can be steered to rest at time $T$ by means of the control functions $u$ and $v$. More precisely, we shall say that system \eqref{System-main} is \textit{locally null-controllable} at time $T>0$ if there exists $\delta>0$ such that for every initial data $(y_0,z_0)\in \mathcal [H_0^1(\Omega)]^2$ satisfying $\|(y_0,z_0)\|_{[H_0^1(\Omega)]^2}\leq \delta$, there exists $(u,v)\in [L^2((0,T)\times \omega)]^2$ such that the solution to \eqref{System-main} verifies 
\begin{equation}\label{null_sol}
y(T,\cdot)=z(T,\cdot)=w(T,\cdot)=0 \quad \textnormal{in } \Omega.
\end{equation}

In this regard, our main result is the following. 
\begin{theorem}\label{thm-nonlinear}
		Assume that  $d_2 \neq 0$ and let $T>0$ be given.  Then, system \eqref{System-main} is locally null-controllable at time $T$. 
\end{theorem}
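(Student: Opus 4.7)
The strategy is the classical linearization plus fixed-point scheme, the non-routine ingredient being the presence of nonlocal couplings inside the Carleman analysis.

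First, I would note that $w(T,\cdot)=0$ is free once $y(T,\cdot)=z(T,\cdot)=0$: at $t=T$ the elliptic third equation reads $-\Delta w(T)+\kappa w(T)=0$ with $w(T)=0$ on $\partial\Omega$, so $w(T)\equiv 0$. Thus \eqref{null_sol} reduces to steering $(y,z)$ to zero at time $T$ by means of the two localized controls $u,v$. Next, I would linearize \eqref{System-main} around zero. Since the chemotactic drifts $\chi_j\nabla\cdot(\cdot\,\nabla w)$ and the quadratic nonlinearities $f_j$ vanish to first order at $(y,z,w)=0$, the linearization is a parabolic-parabolic-elliptic system with $u\mathds{1}_\omega$ in the $y$-equation and $v\mathds{1}_\omega$ in the $z$-equation, coupled through $ay+b\avint y+cw$ and $-\Delta w+\kappa w=d_1 y+d_2 z$ with Dirichlet conditions. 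Substituting $w=(-\Delta+\kappa)_D^{-1}(d_1 y+d_2 z)$ recasts this as a two-equation parabolic system with \emph{nonlocal-in-space} lower-order couplings, coming both from the averages $\avint$ and from the elliptic resolvent.

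By the Hilbert Uniqueness Method, null-controllability of the linearization is equivalent to an observability inequality for the backward adjoint. I would prove this via a global Carleman estimate: apply Fursikov--Imanuvilov weights to each adjoint equation separately, and then absorb the nonlocal zeroth-order terms $\avint\varphi$ and $(-\Delta+\kappa)_D^{-1}\varphi$ appearing on the right-hand side. Both can be dominated in $L^2$ by the corresponding weighted $L^2$-norm of the adjoint states with a constant independent of the Carleman parameter $s$, so that for $s$ large enough they are absorbed into the left-hand side. The hypothesis $d_2\neq 0$ enters here to guarantee that the coupling of $w$ to the directly controlled component $z$ is non-degenerate and no cancellation invalidates the absorption.

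Once the linear null-controllability is available in weighted spaces whose weights blow up as $t\to T^-$, I would conclude by a fixed-point argument: either Liusternik's inverse mapping theorem, or a source-term method combined with Schauder's theorem. The Sobolev embedding $H^1(\Omega)\hookrightarrow L^6(\Omega)$ valid for $N\le 3$ together with maximal parabolic regularity and elliptic regularity for $w$ ensures that the quadratic nonlinearities in \eqref{nonlinear-func} together with the bilinear chemotactic drifts $\chi_j\nabla\cdot(y\nabla w)$ and $\chi_j\nabla\cdot(z\nabla w)$ fit into the admissible source class for small initial data $\|(y_0,z_0)\|_{[H_0^1(\Omega)]^2}\le\delta$. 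The main obstacle I anticipate is the Carleman step rather than the fixed point: absorbing the nonlocal terms, particularly those originating from the elliptic resolvent of $(y,z)$, demands a careful pairing of weighted elliptic estimates with the parabolic Carleman weights, and this is precisely where the ``recently developed techniques'' advertised in the abstract are indispensable.
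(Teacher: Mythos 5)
Your overall architecture (linearize, prove observability by Carleman estimates, conclude by a source-term/fixed-point argument in weighted spaces) matches the paper, and your remarks about $w(T,\cdot)=0$ being automatic and about the Sobolev embeddings for $N\le 3$ are correct. However, the central step of your Carleman argument contains a genuine gap: you claim that the nonlocal term $\avint\varphi$ (in the paper's notation, the term $b\avint\psi$ in the first adjoint equation) ``can be dominated in $L^2$ by the corresponding weighted $L^2$-norm of the adjoint states with a constant independent of the Carleman parameter $s$, so that for $s$ large enough it is absorbed into the left-hand side.'' This is false, and it is precisely the obstruction that makes the problem nontrivial. Writing out the term, one has
\begin{equation*}
\iint_{Q_T} e^{-2s\alpha}\Big|\avint \psi\Big|^2 \;\leq\; \int_0^T e^{-2s\alpha^*(t)}\int_\Omega |\psi(t,\xi)|^2\,\textnormal{d}\xi\,\textnormal{d}t,
\end{equation*}
and to absorb this into $s^3\lambda^4\iint_{Q_T}e^{-2s\alpha}\xi^3|\psi|^2$ you would need $e^{2s(\alpha(t,x)-\alpha^*(t))}$ to be dominated by a polynomial in $s$ and $\xi$; since $\alpha(t,x)-\alpha^*(t)$ reaches $\widehat\alpha-\alpha^*=(e^{3\lambda}-e^{2\lambda})/(t(T-t))>0$, this factor blows up exponentially in $s$ and the absorption fails no matter how large $s$ is taken. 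The mismatch between the pointwise weight $e^{-2s\alpha(t,x)}$ and the spatially averaged quantity $\avint\psi(t)$ is exactly why nonlocal couplings are not ``lower-order perturbations'' for Carleman estimates.

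The paper's proof replaces this step by a genuinely different mechanism: since $\avint\psi$ is $x$-independent, Lemma \ref{Lemma:carleman-0_T} (from Guerrero--Takahashi) converts the global weighted integral of $\avint\psi$ into a \emph{local} one over $\omega$; then the first adjoint equation is used to write $\avint\psi=-\tfrac1b(\varphi_t+\Delta\varphi+a\psi+d_1\theta)$, and the resulting local observations of $\varphi_t$ and $\Delta\varphi$ are reduced to local observations of $\varphi$ itself through integrations by parts combined with a bootstrap of weighted parabolic regularity (Steps 1--3 of Part II in the proof of \Cref{Thm-Carleman}). A similar substitution $\theta=-\tfrac{1}{d_2}(\psi_t+\Delta\psi)$ — this is where $d_2\neq0$ is used — removes the local observation of $\theta$. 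Your alternative suggestion of eliminating $w$ via the resolvent $(-\Delta+\kappa)^{-1}$ does not avoid the difficulty either, since the same weight-mismatch arises for any spatially nonlocal operator; the paper instead keeps the elliptic equation and applies a separate elliptic Carleman estimate to $\theta$. Without the localization lemma and the bootstrap, your observability inequality does not close.
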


In terms of modeling, \Cref{thm-nonlinear} ensures that if the initial population of organisms is sufficiently small, then by acting on a specific location where the species live, we can guarantee their extinction at any time $T>0$. 

\begin{remark}\label{Remark-Nonlinear-Intro} Some remarks are in order.

\begin{itemize}
\item  In system \eqref{System-main} we have considered homogenous Dirichlet boundary conditions, while in \cite{NT13} the authors impose zero Neumann ones. \Cref{thm-nonlinear} is also valid under that consideration by changing some of the tools in the proof, see Subsections \ref{sec:carleman_weights} and \ref{sec:carleman_ineqs}, but not the overall strategy. To be consistent with the second part of this work, we only show the Dirichlet case.

\item From  mathematical point of view, one may consider more general nonlinear functions $f_1, f_2$ in \eqref{nonlinear-func} to obtain the local null-controllability result as in \Cref{thm-nonlinear}; more precisely, we can allow the nonlinearities of type 
	\begin{align}\label{Nonliner-general}
		y^k z^l, \ \ \text{$\forall k,l >0$ such that $0< k+l \leq 4$} ,
	\end{align} 
	as long as the spatial  dimension is up to $3$. We  refer to \Cref{Remark-Nonlinear-Main} in Section \ref{Section-nonlinear} for more details. 

\item Note that when $b= 0$, the linear nonlocal coupling in \eqref{System-main} disappears and, even though the system is still nonlocal due to the nonlinearities, our control problem simplifies a lot. The reason is that the analysis of the linearized model will reduce to a standard local coupled parabolic-elliptic system (see \eqref{System-Linear} below), so throughout the paper, we consider that $b\neq 0$, unless specified. 
\end{itemize}
\end{remark}


The controllability of coupled parabolic-elliptic systems like \eqref{System-main} is a topic that has attracted a lot of attention in the recent past. Among the results available in the literature, we specially mention the works \cite{GZ14,CSG15,GZ16} in which the problem of controllability for some Keller-Segel type systems of chemotaxis is addressed. Other results available in the literature deal with generic parabolic-parabolic systems in which one of the equations degenerates into an elliptic one, see e.g. \cite{CSGP14,CSB15}. In the case where the starting point is a parabolic-elliptic system we refer the reader to \cite{FCLM13,FCLM16,PL19,BM23} and \cite[Chapter 3]{Her21}. 

To put our work in context and to highlight its difficulties, let us introduce the linearized version of \eqref{System-main} around the stationary state $(0,0,0)$, more precisely 
\begin{align}\label{System-Linear}
	\begin{dcases}
		y_t - \Delta y  =  u \mathds{1}_\omega 
		&\text{in } Q_T, 
				\\
		z_t - \Delta z =  a y +  b \avint y  + cw  + v \mathds{1}_\omega
		     &\text{in } Q_T, 
				\\
		-\Delta w + \kappa w =   d_1 y + d_2 z 
		&\text{in } Q_T, 
			\\
		y = z = w = 0  &\text{on } \Sigma_T, 
			\\
		(y,z)(0, \cdot) = (y_0, z_0)  &\text{in } \Omega.
	\end{dcases}
\end{align}
Following classical strategies, to establish the controllability of the nonlinear system \eqref{System-main}, the first ingredient consists in proving the controllability of its linear counterpart. In this case, we shall say that \eqref{System-Linear} is \textit{null-controllable} at time $T>0$ if for every initial data $(y_0,z_0)\in \mathcal [L^2(\Omega)]^2$, there exists $(u,v)\in [L^2((0,T)\times \omega)]^2$ such that the solution to \eqref{System-main} verifies \eqref{null_sol}. 

In this regard, we have the following result.

\begin{theorem}\label{thm-linear}
	Assume that $d_2 \neq 0$ and let $T>0$ be given.  Then, \eqref{System-Linear} is null-controllable at  time $T>0$ for any given initial data  $(y_0,z_0)\in \mathcal [L^2(\Omega)]^2$.  In addition, the controls $(u,v)$ satisfy the following cost estimate as $T\to 0^+$, namely
	\begin{align}\label{control-cost}
		\|(u,v)\|_{[L^2((0,T) \times \omega)]^2} \leq Me^{M/T} \|(y_0, z_0)\|_{[L^2(\Omega)]^2},
		\end{align}
	  where the constant $M>0$ neither depends  on $T$ nor on $(y_0,z_0)$. 
\end{theorem}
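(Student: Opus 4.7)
The plan is to apply HUM and reduce to an observability inequality for the adjoint system. Null-controllability of \eqref{System-Linear} with two controls localized in $\omega$ is equivalent to proving
\begin{equation*}
\|\varphi(0,\cdot)\|_{L^2(\Omega)}^2 + \|\psi(0,\cdot)\|_{L^2(\Omega)}^2 \leq C \iintq \mathds{1}_\omega\bigl(|\varphi|^2 + |\psi|^2\bigr)\,dx\,dt
\end{equation*}
with $C\lesssim e^{M/T}$ (to recover the cost estimate \eqref{control-cost}), where $(\varphi,\psi,\theta)$ is the adjoint state
\begin{equation*}
\begin{cases}
-\varphi_t - \Delta \varphi = a\psi + b \avint \psi + d_1\theta & \text{in } Q_T,\\
-\psi_t - \Delta \psi = d_2\theta & \text{in } Q_T,\\
-\Delta \theta + \kappa \theta = c\psi & \text{in } Q_T,\\
\varphi = \psi = \theta = 0 & \text{on } \Sigma_T,\\
(\varphi,\psi)(T,\cdot) = (\varphi_T,\psi_T) & \text{in } \Omega.
\end{cases}
\end{equation*}

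To establish this observability, my strategy is to apply a global Carleman estimate of Fursikov--Imanuvilov type to each parabolic equation separately, with standard weights $e^{-2s\alpha(t,x)}$ and $\xi(t,x)$ blowing up at $t\in\{0,T\}$ and built from an auxiliary function $\eta_0$ having no critical points outside $\omega_0\Subset\omega$. For $s,\lambda$ large, this yields
\begin{equation*}
I(s,\lambda;\varphi) + I(s,\lambda;\psi) \leq C\iintq e^{-2s\alpha}\bigl(|a\psi + b\avint\psi + d_1\theta|^2 + |d_2\theta|^2\bigr) + C\int_0^T\!\!\int_{\omega_0} e^{-2s\alpha}\xi^3\bigl(|\varphi|^2+|\psi|^2\bigr),
\end{equation*}
where the Carleman functional $I(s,\lambda;\cdot)$ dominates the weighted $L^2$-term $s^3\lambda^4\iintq e^{-2s\alpha}\xi^3|\cdot|^2$.

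The heart of the argument is to absorb every term in the bulk integral on the right-hand side into the left. The local term $a\psi$ is swallowed by enlarging $s\lambda$. For the elliptic coupling, the pointwise-in-time $H^2$-estimate $\|\theta(t,\cdot)\|_{H^2(\Omega)} \leq C\|\psi(t,\cdot)\|_{L^2(\Omega)}$ for $-\Delta\theta+\kappa\theta=c\psi$, combined with the embedding $H^2(\Omega)\hookrightarrow L^\infty(\Omega)$ valid since $N\leq 3$, reduces $\iintq e^{-2s\alpha}|\theta|^2$ to an expression of the form $\int_0^T \|\psi(t,\cdot)\|_{L^2(\Omega)}^2\int_\Omega e^{-2s\alpha(t,x)}\,dx\,dt$. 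For the nonlocal term $\avint\psi$, which is constant in $x$, Jensen's inequality gives analogously
\begin{equation*}
\iintq e^{-2s\alpha}|\avint\psi|^2\,dx\,dt \leq \tfrac{1}{|\Omega|}\int_0^T \|\psi(t,\cdot)\|_{L^2(\Omega)}^2\int_\Omega e^{-2s\alpha(t,x)}\,dx\,dt.
\end{equation*}
Closing the Carleman estimate thus reduces to a compatibility inequality comparing the time-only integrand $\int_\Omega e^{-2s\alpha(t,x)}\,dx$ with the dominant $(t,x)$-integrand $e^{-2s\alpha}\xi^3$; this is the ``recently developed technique'' referenced in the abstract, typically realized by introducing an auxiliary time-only weight $e^{-s\alpha^\ast(t)}$ with $\alpha^\ast(t)=\min_{x\in\Omega}\alpha(t,x)$ and exploiting pointwise weight comparisons that hold for $s,\lambda$ large.

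Once absorption is complete we arrive at
\begin{equation*}
s^3\lambda^4 \iintq e^{-2s\alpha}\xi^3\bigl(|\varphi|^2+|\psi|^2\bigr) \leq C \int_0^T\!\!\int_{\omega_0} e^{-2s\alpha}\xi^3\bigl(|\varphi|^2+|\psi|^2\bigr),
\end{equation*}
and a standard dissipation argument on $(0,T/2)$ for the backward adjoint (using elliptic regularity once more to treat $\theta$) upgrades this to the desired observability inequality with constant $Me^{M/T}$. The main obstacle I expect is the compatibility step: reconciling the $x$-constant quantity $\avint\psi$ and the $x$-pointwise $L^\infty$-bound on $\theta$ with a Carleman weight that varies explicitly in $x$ is not automatic, and it is precisely here that the non-standard ingredient of the proof enters; once it is in place, the rest is essentially a careful bookkeeping of parameters and a routine energy estimate.
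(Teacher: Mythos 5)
Your overall architecture (Carleman estimate for the adjoint $\Rightarrow$ observability with constant $e^{M/T}$ $\Rightarrow$ HUM/penalization) is the same as the paper's, and the final reduction steps you describe are standard. However, there is a genuine gap at exactly the point you flag as ``the compatibility step,'' and the mechanism you propose to close it does not work. After bounding $\left|\avint\psi\right|^2$ (or $\|\theta(t,\cdot)\|_{L^\infty(\Omega)}^2$) by $\|\psi(t,\cdot)\|_{L^2(\Omega)}^2$, the bulk term you must absorb takes the form $\int_0^T e^{-2s\alpha^*(t)}\|\psi(t,\cdot)\|_{L^2(\Omega)}^2\,\dt$ with $\alpha^*(t)=\min_{x}\alpha(t,x)$, because an $x$-independent quantity only sees the \emph{largest} value of the weight $e^{-2s\alpha}$ over $\Omega$. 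Since $e^{-2s\alpha^*}\geq e^{-2s\alpha}$ pointwise, absorbing this into $s^3\lambda^4\iintq e^{-2s\alpha}\xi^3|\psi|^2$ would require controlling the ratio $e^{2s(\alpha-\alpha^*)}$, which grows like $e^{cs/(t(T-t))}$ and cannot be compensated by any power of $s\xi$. There is no ``pointwise weight comparison valid for $s,\lambda$ large'' in the favorable direction; this is precisely the known obstruction for nonlocal couplings, not a routine bookkeeping issue.

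The paper's actual route is different and is the substantive content of \Cref{Thm-Carleman}. For the nonlocal term it first invokes the special inequality of \Cref{Lemma:carleman-0_T} (valid for functions of $t$ only) to localize $\iintq e^{-2s\alpha}\left|\avint\psi\right|^2$ to $(0,T)\times\omega$, then substitutes $\avint\psi=-\tfrac1b(\vphi_t+\Delta\vphi+a\psi+d_1\theta)$ and converts the resulting local integrals of $|\vphi_t|^2$ and $|\Delta\vphi|^2$ back into zeroth-order local observations of $\vphi$ via integrations by parts in $t$ and $x$, supported by a two-step weighted bootstrap ($\widehat\rho$, then $\widetilde\rho=\widehat\xi^{-2}\widehat\rho$) giving $L^2(H^4)$ and $H^2$-in-time bounds on the weighted adjoint state. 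For $\theta$ it does not use $H^2\hookrightarrow L^\infty$ at all: the bulk term $\iintq e^{-2s\alpha}|\theta|^2$ is absorbed into the elliptic Carleman functional $I_E(s,\lambda;\theta)$ (which you omit from the left-hand side), and the remaining \emph{local} observation of $\theta$ is eliminated using $\theta=-\tfrac{1}{d_2}(\psi_t+\Delta\psi)$ --- this is where the hypothesis $d_2\neq 0$ enters, which your argument never uses. As written, your proposal is missing the key lemma and the bootstrap/integration-by-parts machinery, and the step it relies on instead is false.
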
 

From the control point of view, it is known that the inclusion of additive nonlocal terms increase notoriously the difficulty of the problem, even in the linear setting. In fact, there are only a handful of results for such kind of problems, see e.g. \cite{FCLZ16,MT18,LZ18,BHS19,HSLB21}. All of them can be summarized by taking a look at the linear equation
\begin{equation}\label{eq:nonlocal_ex}
\displaystyle y_t-\Delta y+\int_{\Omega}k(x,\xi) y(\xi)\d\xi=u\mathds{1}_{\omega} \text{ in }Q_T, \quad 
y=0 \text{ on } \Sigma_T, \quad 
y(0,\cdot)=y_0 \text{ in } \Omega,
\end{equation}
where $k$ is a suitable kernel. In \cite{FCLZ16}, the authors have studied the null-controllability of \eqref{eq:nonlocal_ex} by imposing some restrictive conditions about the analyticity of the kernel $k$ (this was later extended to the case of coupled parabolic systems in \cite{LZ18}). Such conditions were relaxed  in the one-dimensional case in \cite{MT18} by considering kernels of the form $k(x,\xi)=k_1(x)k_2(\xi)$ and in \cite{BHS19} by considering time-dependent kernels behaving as $e^{-C/(T-t)}$ near $T$ for some $C>0$. 

More recently, in \cite{HSLB21} the authors have studied the particular case $k(x,\xi)=\frac{b}{|\Omega|}$ for $b\in\mathbb R$, which corresponds to the nonlocal terms in \eqref{System-main} and \eqref{System-Linear}. It is important to mention that this case is not included in the results of \cite{FCLZ16} or \cite{BHS19} since constant kernels do not satisfy the hypothesis of any of those works\footnote{If we restrict to the one-dimensional case, the results of \cite{MT18} are in fact applicable.}. The main idea in \cite{HSLB21} is to use the following convergence between systems
\begin{equation}\label{conv}
\begin{cases}
y_t-\Delta y + w = u\mathds{1}_{\omega}, \\
\tau w_t-\sigma \Delta w  = y - w, \\
y|_{\partial \Omega}=\frac{\partial w}{\partial \nu}|_{\partial \Omega}=0,
\end{cases}
\quad 
\xrightarrow[\tau \to 0 \atop \sigma \to \infty]{}
\quad 
\begin{cases}
\displaystyle y_t-\Delta y+\avint y = u\mathds{1}_{\omega}, \\
y|_{\partial \Omega}=0,
\end{cases}
\end{equation}
which was originally considered in \cite{HR00} outside the control framework. This means that for studying the controllability of \eqref{eq:nonlocal_ex} with constant kernels $k(x,\xi)=\frac{b}{|\Omega|}$ for $b\in\mathbb R$, the problem amounts to prove the uniform null-controllability (w.r.t. $\tau$ and $\sigma$) of the coupled system in \eqref{conv}. However, as shown in \cite[Section 2]{HSLB21}, this turns out to be a delicate question and requires lengthy and precise computations, so adding extra equations to the already coupled system \eqref{System-Linear} is out of order. 

In \cite{Gue-Takahasi}, the authors have introduced a new idea that can be used to prove the null-controllability of the system
\begin{equation}
\displaystyle y_t-\Delta y+\avint y = u\mathds{1}_{\omega} \text{ in } Q_T, \quad 
y=0 \text{ on }\Sigma_T, \quad y(0,\cdot)=y_0  \text{ in } \Omega,
\end{equation}
without extending it as in \cite{HSLB21}, and employing Carleman estimates directly on the nonlocal terms\footnote{This kind of question has been already asked in \cite{FCLZ16} for system \eqref{eq:nonlocal_ex} and was left as an open difficult question, but the technique shown in \cite{Gue-Takahasi} partially answers this in the case where the kernel $k(x,\xi)$ only depends on $\xi$}. This idea consists in proving an observability inequality for the adjoint system
\begin{equation}
\displaystyle -\varphi_T-\Delta \varphi+\avint \varphi = 0 \text{ in } Q_T, \quad 
\varphi=0 \text{ on }\Sigma_T, \quad \varphi(T, \cdot)=\varphi_T  \text{ in } \Omega,
\end{equation}
by applying standard Carleman estimates (see Subsection \ref{sec:carleman_ineqs}) and transforming the global term corresponding to $\displaystyle \int_{\Omega}\rho^{2}\left|\avint \varphi\right|^2$ (where $\rho$ is the usual exponential weight of Carleman estimates) into a local one of the form $\displaystyle \int_{\omega}\rho^{2}(|\varphi_t|^2+|\Delta \varphi|^2)$ and which can be further estimated by a localized term of $\varphi$ in $\omega$. 
 
This strategy is particularly import for us since it allows us to extend the classical methodology of \cite{GB10} for studying the controllability of linear coupled systems to the case of \eqref{System-Linear} where couplings can be done through nonlocal terms.

\subsection{Further results on the controllability of nonlocal coupled systems}\label{Subsection-spectral-introduction} From \Cref{thm-linear}, we see that the action of two controls is needed for controlling \eqref{System-Linear}, but taking a closer look at the internal couplings, it is reasonable to ask whether only the control $u$ would suffice to achieve a null-controllability result, that is, one would expect that
\begin{equation*}
u \mathds{1}_\omega \xrightarrow[]{controls} y \xrightarrow[]{controls} z \xrightarrow[]{controls} w.
\end{equation*}
Actually, when $b= 0$ and $d_1=0$, system \eqref{System-Linear} is in cascade form and it is known (see \cite{FCLM16}) that the corresponding system is  null-controllable with only one control localized on the first equation, that is, we can take $v\equiv 0$.

In the second part of this work, we will see that the presence of nonlocal couplings play a big role for determining the controllability properties of a coupled system. To state our results, we shall focus on the simplified one-dimensional  system
\begin{equation}\label{eq:sys_simplified}
\begin{dcases}
			y_t - y_{xx}  = u \mathds{1}_{\omega} &\text{in } (0,T)\times (0,1), \\
			z_t - z_{xx} = a y +  b \int_0^1 y  &\text{in } (0,T)\times (0,1), \\
			y = z = 0  &\text{on } (0,T) \times \{0,1\}, \\
			(y,z)(0, \cdot) = (y_0, z_0)  &\text{in } (0,1),
		\end{dcases}
\end{equation}
where $a,b\in\mathbb R$, $(y_0,z_0)\in [L^2(0,1)]^2$ and $\omega \subset (0,1)$ is a nonempty open set. 

For system \eqref{eq:sys_simplified}, we define an additional notion of controllability: system \eqref{eq:sys_simplified} is said to be \textit{approximately controllable} at time $T>0$ if, for any $(y_0,z_0)\in [L^2(0,1)]^2$ and any $(y_1,z_1)\in [L^2(0,1)]^2$, there exists $u\in L^2((0,T)\times\omega)$ such that 
\begin{equation*}
\|(y(T, \cdot),(z(T, \cdot))-(y_1,z_1)\|_{[L^2(0,1)]^2}<\epsilon, \quad \text{for any $\epsilon>0$}.
\end{equation*}

Let us set
\begin{equation*}
\phi_k(x)=\sin(k\pi x), \ \  \forall x\in (0,1), \quad \lambda_k=k^2\pi^2, \quad \forall k\in\mathbb N^*,
\end{equation*}
i.e.,  the eigenfunctions and eigenvalues of the Laplace operator in 1-d with homogenous Dirichlet boundary conditions. 

We write $\Lambda=\{\lambda_k\}_{k\geq 1}\subset \mathbb R_+$ and $\Phi=\{\phi_k\}_{k\geq 1}\subset L^2(0,1)$. Then, we consider the following families of eigenfunctions
\begin{equation*}
\Phi_e:=\{\phi_{2k}\in \Phi: k\in\mathbb N^*\} \quad\text{and}\quad \Phi_o:=\{\phi_{2k-1}\in \Phi: k\in\mathbb N^*\}.
\end{equation*}
Clearly $\Phi=\Phi_{e}\cup \Phi_o$ and
\begin{equation*}
  \int_0^1 \phi =0, \quad \forall \phi\in\Phi_e. 
\end{equation*}

To state our first (non-)controllability result, let us define the linear closed subspaces of $L^2(0,1)$ given by
\begin{equation*}
\mathcal{H}_{e}:=\textnormal{span}\left\{\phi \in \Phi_e \right\} \text{ in } L^2(0,1) \quad\text{and}\quad \mathcal{H}_{o}:=\textnormal{span}\left\{\phi\in \Phi_o \right\} \text{ in } L^2(0,1).
\end{equation*}
We have the following.
\begin{theorem}\label{prop:negative}
 Let $a=0$ and $b\neq 0$ be a real constant. Then,
 \begin{enumerate}[label={\arabic*})]
 \item system \eqref{eq:sys_simplified} is not  null-controllable;
 \item if $(y_0,z_0)\in L^2(0,1)\times \mathcal H_o$, system \eqref{eq:sys_simplified} is approximately controllable; 
 \item but, if $(y_0,z_0)\in L^2(0,1)\times \mathcal H_e$, the approximate controllability of \eqref{eq:sys_simplified} fails. 
\end{enumerate}
\end{theorem}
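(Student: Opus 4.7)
The plan is to exploit the Dirichlet eigenbasis on $(0,1)$ together with the key arithmetic fact
\[
m_k := \int_0^1 \phi_k(x)\,dx = 0 \text{ for } k \text{ even}, \qquad m_k \neq 0 \text{ for } k \text{ odd}.
\]
Since the nonlocal source $b\int_0^1 y\,dx$ in the equation for $z$ is constant in $x$, its projection onto $\phi_{2j}$ vanishes, so the even Fourier coefficient $z_{2j}(t):=\langle z(t,\cdot),\phi_{2j}\rangle$ satisfies the decoupled ODE $\dot z_{2j}+\lambda_{2j} z_{2j}=0$, independently of the control $u$. This structural fact is the common source of all three claims.

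For parts (1) and (3) I would argue by explicit obstruction. Fix $k\geq 1$ and take $y_0=0$, $z_0=\phi_{2k}$; the decoupled ODE yields $\langle z(T,\cdot),\phi_{2k}\rangle = e^{-\lambda_{2k}T}\langle z_0,\phi_{2k}\rangle \neq 0$ regardless of $u$. This proves (1), since $z(T)\neq 0$. For (3) the same identity shows that the $\phi_{2k}$-Fourier coefficient of $z(T)$ is entirely dictated by $z_0$, so the target $(y_1,z_1)=(0,0)$ cannot be approximated within error less than $e^{-\lambda_{2k}T}|\langle z_0,\phi_{2k}\rangle|$.

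For part (2) I would use the classical duality between approximate controllability and unique continuation for the adjoint. The subspace $L^2(0,1)\times\mathcal{H}_o$ is invariant under the dynamics whenever $z_0\in\mathcal{H}_o$ (the spatially constant source $b\int_0^1 y$ has zero projection onto every $\phi_{2j}$), so the natural statement is density of the reachable set in this invariant subspace. The adjoint system reads
\[
-\varphi_t-\varphi_{xx}=b\int_0^1 \psi(t,x')\,dx', \qquad -\psi_t-\psi_{xx}=0,
\]
with homogeneous Dirichlet conditions and terminal data $(\varphi_T,\psi_T)\in L^2(0,1)\times\mathcal{H}_o$; the required implication is that $\varphi\equiv 0$ on $(0,T)\times\omega$ forces $\varphi_T=\psi_T=0$. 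Since $\varphi$ vanishes on an open set in space-time, both $\varphi_t$ and $\varphi_{xx}$ vanish there, and the first adjoint equation then forces the $x$-independent quantity $g(t):=b\int_0^1\psi(t,x)\,dx$ to vanish on all of $(0,T)$. Expanding $\psi_T=\sum_{j\text{ odd}}\hat\psi_j\phi_j$ gives
\[
g(t)=b\sum_{j\text{ odd}}e^{-\lambda_j(T-t)}\hat\psi_j\, m_j=0 \quad \text{on } (0,T),
\]
a Dirichlet series in $s=T-t$ whose vanishing on an interval, combined with $m_j\neq 0$ for odd $j$, forces $\hat\psi_j=0$ for every odd $j$. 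Hence $\psi_T=0$ and $\psi\equiv 0$, so the equation for $\varphi$ reduces to the pure backward heat equation with $\varphi\equiv 0$ on $(0,T)\times\omega$, and the standard unique continuation principle yields $\varphi\equiv 0$ and $\varphi_T=0$.

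The main obstacle, in my view, is not the algebraic Dirichlet-series step but the functional-analytic framing of part (2): the reachable set from $L^2(0,1)\times\mathcal{H}_o$ is contained in that same invariant subspace, so ``approximate controllability'' must be read relative to it, which in turn forces the adjoint to be tested only against $\psi_T\in\mathcal{H}_o$. Precisely this restriction removes the obstruction coming from the zero moments $m_{2j}=0$; without it, the very same argument reproduces the failure modes of parts (1) and (3), which serves as a pleasant internal consistency check.
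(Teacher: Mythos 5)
Your proposal is correct, and for parts 1) and 3) it takes a genuinely different (primal) route from the paper. The paper argues entirely on the dual side: for 1) it constructs an adjoint datum $\psi_T=a_0\phi_{2k_0}$, $\varphi_T=0$, for which $\int_0^1\psi=0$, hence $\varphi\equiv 0$ and the observability inequality fails; for 3) the same construction violates the unique continuation property. You instead observe directly on the controlled system that, since $a=0$ and $\int_0^1\phi_{2j}=0$, each even coefficient $z_{2j}(t)=\langle z(t,\cdot),\phi_{2j}\rangle$ obeys $\dot z_{2j}+\lambda_{2j}z_{2j}=0$ independently of $u$, so the even part of $z(T)$ is frozen by $z_0$; this yields explicit unreachable targets. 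The two arguments are dual to one another (your invariant mode $\phi_{2k}$ is precisely the paper's adjoint datum), but yours is more elementary, makes the obstruction quantitative, and identifies the invariant subspace explicitly — which is exactly the point you raise about how part 2) must be read: the reachable set from $L^2(0,1)\times\mathcal{H}_o$ stays in that subspace, so approximate controllability there means density in it, and correspondingly the adjoint is tested only against $\psi_T\in\mathcal{H}_o$. The paper makes the same (implicit) restriction. For part 2) your proof coincides with the paper's up to one technical choice: after reducing to $\sum_{j\ \mathrm{odd}}e^{-\lambda_j(T-t)}\hat\psi_j m_j=0$ on $(0,T)$, the paper concludes $\hat\psi_j=0$ by pairing with a biorthogonal family to the exponentials, whereas you invoke uniqueness for Dirichlet series vanishing on an interval; both are standard and correct, and both then finish with Holmgren-type unique continuation for the remaining heat equation in $\varphi$. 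One cosmetic caveat: in part 3), if $z_0=0\in\mathcal{H}_e$ your chosen target $(0,0)$ is actually reachable, so one should either take $z_0\neq 0$ in $\mathcal{H}_e$ (as you implicitly do) or switch to a target with a nonzero even component; this does not affect the validity of the claim.
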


This result tells that just having the presence of a non-local coupling is not enough to establish a null-controllability result for \eqref{eq:sys_simplified} and, moreover, there are some scenarios where not even the weaker notion of approximate controllability holds.

This can be remediated by considering coefficients $a\neq 0$ as the following result states. 
\begin{theorem}\label{prop:improved} Let $a\neq 0$ be any  real constant. 
\begin{enumerate}[label={\arabic*})]
	\item
 Then, system \eqref{eq:sys_simplified} is approximately controllable.
\item  In addition, under the assumption  
\begin{align}\label{assump-1}
	\sqrt{ -\frac{8b}{a\pi^2}  } \notin \mathbb N_{\odd}:=\{k\in\mathbb N^*: \textnormal{$k$ is odd}\},
	\end{align} 
the system \eqref{eq:sys_simplified} is null-controllable.
\end{enumerate}
\end{theorem}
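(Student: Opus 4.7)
My plan is to establish both assertions by duality with the backward adjoint of \eqref{eq:sys_simplified}, namely
\[
-\varphi_t-\varphi_{xx}=a\psi+b\int_0^1\psi,\quad -\psi_t-\psi_{xx}=0 \text{ in } (0,T)\times(0,1),
\]
with homogeneous Dirichlet boundary conditions and terminal data $(\varphi_T,\psi_T)$. Approximate controllability is equivalent to the unique continuation statement ``$\varphi\equiv 0$ on $(0,T)\times\omega\Rightarrow (\varphi_T,\psi_T)=(0,0)$'', while null controllability is equivalent to an observability estimate that I plan to derive via the moment method.

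For part (1), assume $\varphi\equiv 0$ on the open cylinder $(0,T)\times\omega$. Then all its $(t,x)$-derivatives vanish there, and the $\varphi$-equation forces $a\psi(t,x)+b\int_0^1\psi(t,y)\,dy=0$ almost everywhere on that cylinder. The crucial use of $a\neq 0$ is that this makes $\psi(t,\cdot)$ constant in $x$ on $\omega$. Since $\psi(t,\cdot)$ is real-analytic on $(0,1)$ for each $t<T$ (being the value at time $T-t$ of a forward heat flow with $L^2$ data $\psi_T$), constancy on $\omega$ propagates to $\psi_{xx}\equiv 0$ on all of $(0,1)$; thus $\psi(t,\cdot)$ is affine, and the Dirichlet conditions give $\psi\equiv 0$. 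With $\psi=0$, the $\varphi$-equation becomes the homogeneous backward heat equation, and standard unique continuation applied to $\varphi=0$ on $(0,T)\times\omega$ yields $\varphi\equiv 0$; hence $(\varphi_T,\psi_T)=(0,0)$.

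For part (2), the strategy is the moment method adapted to a Jordan structure. A direct spectral analysis of the generator $\mathcal A(y,z):=(-y_{xx},\,-z_{xx}-ay-b\int_0^1 y)$ shows that its eigenvalues are $\{\lambda_k=k^2\pi^2\}_{k\geq 1}$, each of algebraic multiplicity $2$. Setting $\tilde\phi_k(x):=\sqrt{2}\sin(k\pi x)$ and $\alpha_k:=\int_0^1\tilde\phi_k$ (equal to $0$ for even $k$ and to $2\sqrt 2/(k\pi)$ for odd $k$), a Fredholm-type computation shows that $\lambda_k$ has geometric multiplicity exactly $1$ -- equivalently, admits a genuine length-two Jordan chain -- if and only if $a+b\alpha_k^2\neq 0$. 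For even $k$ this is automatic since $a\neq 0$; for odd $k$ it is exactly the hypothesis \eqref{assump-1}, since $a+b\alpha_k^2=0$ with $k$ odd is equivalent to $k=\sqrt{-8b/(a\pi^2)}\in\mathbb N_{\odd}$. Under \eqref{assump-1}, the adjoint evolution therefore admits an expansion whose time profiles are precisely $\{e^{-\lambda_k(T-t)},(T-t)e^{-\lambda_k(T-t)}\}_{k\geq 1}$.

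I would then translate null controllability into the moment problem $\int_0^T\int_\omega u\varphi\,dx\,dt=-\langle y_0,\varphi(0)\rangle-\langle z_0,\psi(0)\rangle$ for all $(\varphi_T,\psi_T)\in [L^2(0,1)]^2$. Testing against $(\tilde\phi_k,0)$ produces a first family of scalar moments for $U_k(t):=\int_\omega u(t,x)\tilde\phi_k(x)\,dx$ against $e^{-\lambda_k(T-t)}$, while testing against $(0,\tilde\phi_k)$ and expanding the inhomogeneous $\varphi$ in the basis $\{\tilde\phi_j\}_j$ produces a second family in which the coefficient of the moment of $U_k$ against $(T-t)e^{-\lambda_k(T-t)}$ equals $a+b\alpha_k^2$, non-zero by \eqref{assump-1}. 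Classical Fattorini--Russell / Ammar-Khodja--Benabdallah--Gonz\'alez-Burgos theory supplies a biorthogonal family to $\{e^{-\lambda_k(T-t)},(T-t)e^{-\lambda_k(T-t)}\}_{k\geq 1}$ in $L^2(0,T)$, with norms $\lesssim Me^{M/T}$, thanks to the gap $\sqrt{\lambda_{k+1}}-\sqrt{\lambda_k}\geq c$ and the summability $\sum 1/\lambda_k<\infty$; from this family, the $U_k$'s can be assembled mode by mode. The main obstacle I anticipate is the cross-coupling of odd modes in the second family of moments: for odd $k$, the equation involves contributions from every $U_j$ with $j$ odd and $j\neq k$, through integrals against $(e^{-\lambda_j(T-t)}-e^{-\lambda_k(T-t)})/(\lambda_k-\lambda_j)$; handling these requires organizing the reconstruction as a block-moment problem on the odd part of the spectrum and verifying that the resulting series for $u$ converges in $L^2((0,T)\times\omega)$ with the expected $Me^{M/T}$ dependence on $T$.
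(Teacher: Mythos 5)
Your part~1) is correct, and it takes a genuinely different route from the paper. The paper verifies the Fattorini--Hautus criterion on the explicit eigenfunctions $\Phi_k=(\phi_k,0)^{\top}$ of $\mathcal A^*$, whereas you argue by direct unique continuation: from $\varphi\equiv 0$ on $(0,T)\times\omega$ you read off $a\psi+b\int_0^1\psi=0$ there, use $a\neq 0$ to conclude $\psi(t,\cdot)$ is constant on $\omega$, and then spatial analyticity of the backward heat flow plus the Dirichlet condition forces $\psi\equiv 0$, after which classical unique continuation kills $\varphi$. This is self-contained and sidesteps the enumeration of the point spectrum of $\mathcal A^*$; it is a perfectly acceptable alternative.

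Part~2), however, has a genuine gap, and you have in fact located it yourself. By testing the duality identity against the pure modes $(0,\tilde\phi_k)$ and expanding the resulting inhomogeneous $\varphi$ over the whole basis, you generate, for each odd $k$, a moment equation that couples \emph{all} odd-mode observations $U_j$ through the profiles $\bigl(e^{-\lambda_j(T-t)}-e^{-\lambda_k(T-t)}\bigr)/(\lambda_k-\lambda_j)$; you then defer the resolution to an unspecified ``block-moment problem on the odd part of the spectrum'' together with an unverified convergence claim. That deferred step is precisely the content of the proof, and it is not routine. The paper avoids the coupling altogether by testing against the \emph{generalized eigenfunctions} $\widetilde\Phi_k=(\widetilde\zeta_k,\widetilde\eta_k)^{\top}$ of $\mathcal A^*$ (the Jordan chains you correctly identify, whose existence is exactly condition \eqref{assump-1}): since $e^{-t\mathcal A^*}\widetilde\Phi_k=e^{-t\lambda_k}(\widetilde\Phi_k-t\Phi_k)$, each moment equation in \eqref{moment-problm} involves only the two time profiles $e^{-\lambda_k(T-t)}$ and $(T-t)e^{-\lambda_k(T-t)}$ of the single mode $k$. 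The one remaining cross term, $\int_\omega\widetilde\zeta_k\phi_k$, is then annihilated by exploiting the free constant $B_k$ in the generalized eigenfunction, as in \eqref{constants-B-k}, so that the ansatz $u=\sum_k u_k$ with $u_k$ proportional to $\mathds{1}_\omega\phi_k$ times the biorthogonal functions $q_{k,0},q_{k,1}$ solves the moment problem mode by mode, and the $L^2$ bound follows from the standard estimate on the biorthogonal family together with the uniform lower bound on $\|\mathcal B^*\Phi_k\|_{L^2(\omega)}$. Without this change of test functions (or an actual execution of the block-moment machinery you invoke), your argument for null-controllability is incomplete.
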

The proofs of Theorems \ref{prop:negative} and \ref{prop:improved} rely on a precise counterexample for the negative result and on spectral techniques: the Fattorini-Hautus  criterion (\cite{FATTORINI-MAIN,OLIVE-GUILLAUME}) for the approximate controllability and the moments method for the null-controllability (see e.g. \cite{Fattorini-Russell-1,Fattorini-Russell-2}).  Note that, when $b=0$ and $a\neq 0$,  system \eqref{eq:sys_simplified} boils down to a cascade parabolic  system and, in that case, one can observe that the condition \eqref{assump-1} will not appear any more, see for instance \cite{GB10}.

\subsection{Organization of the paper}
The  paper is organized as follows. In \Cref{sec:well-posed} we present some preliminary results for the well-posedness of system \eqref{System-Linear} and its associated adjoint (see \eqref{System-adjoint} below).  \Cref{Section-Carleman}  is devoted to the proof of \Cref{thm-linear}, and the main tool for this is to establish a suitable Carleman estimate which is proved in Subsection \ref{Subsection-Carleman}.  The study of  nonlinear case and the proof of \Cref{thm-nonlinear} are given in \Cref{Section-nonlinear}. This proof relies on the source term method developed in \cite{Tucsnak-nonlinear} followed by a fixed point argument, and to perform the analysis, we use the  precise control cost ($e^{M/T}$) for the linearized system \eqref{System-Linear}. Finally, we present the proofs of Theorems \ref{prop:negative} and \ref{prop:improved} in \Cref{sec:further}.

\section{Preliminaries}\label{sec:well-posed}

This section is devoted to analyze the well-posedness of the linearized control problem \eqref{System-Linear} and its associated adjoint system 
		\begin{align}\label{System-adjoint}
		\begin{dcases}
		-\vphi_t - \Delta \vphi  = a\psi  + b \avint \psi  + d_1 \theta
		    & \text{in } Q_T, 
		\\
	-\psi_t - \Delta \psi =  d_2 \theta   &\text{in } Q_T, 
	  \\
	 - \Delta \theta +  \kappa  \theta = c \psi   &\text{in } Q_T, 
	  \\
	\vphi= \psi = \theta = 0  &\text{on } \Sigma_T, 
		 \\
			(\vphi,\psi)(T, \cdot) = (\vphi_T,\psi_T)  &\text{in } \Omega,
		\end{dcases}
	\end{align}
where $(\vphi_T, \psi_T)\in [L^2(\Omega)]^2$.  

Here and throughout the paper, $C>0$ denotes a generic positive constant that may vary line to line but independent of the time $T>0$ and the initial data $(y_0,z_0)$ (resp. final data $(\varphi_T,\psi_T)$).

We start with the following.
\begin{proposition}\label{prop-control}
	There exist  constants $C>0$ independent in $T$ or initial data such that   we have the following results. 
	
	\begin{enumerate}[label={\arabic*})] 
\item\label{sol-control} 	For given  data $(y_0,z_0)\in [L^2(\Omega)]^2$ and $(u,v)\in [L^2((0,T)\times \omega)]^2$, there exists unique weak solution $(y,z,w)$ to \eqref{System-Linear}, satisfying 
	\begin{align}\label{regu-1} 
	&	\|(y,z)\|_{[L^\infty(0,T; L^2(\Omega))]^2} + \|(y,z)\|_{[L^2(0,T; H^1_0(\Omega))]^2} + \|(y_t,z_t)\|_{[L^2(0,T; H^{-1}(\Omega))]^2} \notag  \\
&+ \|w\|_{L^\infty(0,T ;  H^2(\Omega) \cap H^1_0(\Omega) )}		\leq C e^{CT}\left(  \|(y_0,z_0) \|_{[L^2(\Omega)]^2} + \|(u,v)\|_{[L^2((0,T)\times \omega)]^2}  \right) .
 \end{align}

\item\label{regu-control} On the other hand, if the initial data is chosen as $(y_0,z_0)\in [H^1_0(\Omega)]^2$, we have the following regularity result
	\begin{align}\label{regu-2} 
&		\|(y,z)\|_{[L^\infty(0,T; H^1_0(\Omega))]^2} + \|(y,z)\|_{[L^2(0,T; H^2(\Omega))]^2} + \|(y_t,z_t, )\|_{[L^2(Q_T)]^2}  
 + \|w\|_{L^\infty(0,T; H^3(\Omega) )} \notag \\
& + \|w_t\|_{L^2(0,T; H^2(\Omega)\cap H^1_0(\Omega))} 
	\leq C e^{CT}\left(  \|(y_0,z_0) \|_{[H^1_0(\Omega)]^2} + \|(u,v)\|_{[L^2((0,T)\times \omega)]^2}  \right) .
	\end{align}
\end{enumerate}
\end{proposition}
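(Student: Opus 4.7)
The plan is to exploit the near-triangular structure of \eqref{System-Linear}: the equation for $y$ is decoupled from $(z,w)$, and once $y$ is known, the pair $(z,w)$ reduces to a single non-local parabolic equation for $z$ after inverting the elliptic operator. Concretely, let $\mathcal{T}_\kappa:=(-\Delta+\kappa)^{-1}$ denote the solution operator of $-\Delta\phi+\kappa\phi=f$ in $\Omega$ with $\phi=0$ on $\partial\Omega$; then the third equation gives $w=\mathcal{T}_\kappa(d_1 y+d_2 z)$, and substituting into the second transforms \eqref{System-Linear} into the cascade
\begin{equation*}
y_t-\Delta y = u\mathds{1}_\omega, \qquad z_t-\Delta z - c d_2\,\mathcal{T}_\kappa z = a y + b\,\avint y + c d_1\,\mathcal{T}_\kappa y + v\mathds{1}_\omega.
\end{equation*}
Since $\mathcal{T}_\kappa$ is bounded on $L^2(\Omega)$, the operator $-\Delta - c d_2\,\mathcal{T}_\kappa$ is a bounded perturbation of the Dirichlet Laplacian, hence generates an analytic semigroup on $L^2(\Omega)$, and classical parabolic theory (Galerkin approximations or Duhamel's formula) yields a unique weak $z$ once $y$ is known. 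Uniqueness of the full triple $(y,z,w)$ for the original system then follows by linearity.

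For the bound in part \ref{sol-control}, I would argue by standard energy methods. Testing the $y$-equation against $y$ and using Young's inequality produces the estimate for $y$ with no Gr\"onwall factor. Testing the modified $z$-equation against $z$, combined with $|\avint y|\leq C\|y\|_{L^2(\Omega)}$ and $\|\mathcal{T}_\kappa\phi\|_{L^2}\leq C\|\phi\|_{L^2}$, yields zero-order terms that are absorbed through Gr\"onwall's inequality; this is the source of the $e^{CT}$ prefactor in \eqref{regu-1}. The bound on $w$ in $L^\infty(0,T;H^2\cap H^1_0)$ follows directly from classical elliptic regularity applied to $(-\Delta+\kappa)w = d_1 y + d_2 z$, using the already obtained $L^\infty_t L^2$ bounds on $(y,z)$.

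For the improved regularity in part \ref{regu-control} with $(y_0,z_0)\in [H^1_0(\Omega)]^2$, I would test each parabolic equation against the time derivative of its own unknown; this yields $\|\nabla y\|_{L^\infty(0,T;L^2(\Omega))} + \|y_t\|_{L^2(Q_T)}$ and an analogous bound for $z$, again with an $e^{CT}$ factor from the coupling terms. Elliptic regularity applied to $-\Delta y = u\mathds{1}_\omega - y_t$ in $L^2(Q_T)$ then promotes $y$ to $L^2(0,T;H^2(\Omega))$, and similarly for $z$. For the $w$-variable, since $d_1 y + d_2 z \in L^\infty(0,T;H^1_0(\Omega))$, elliptic regularity bootstraps $w\in L^\infty(0,T;H^3(\Omega))$; finally, differentiating the elliptic equation in time gives $(-\Delta+\kappa)w_t = d_1 y_t + d_2 z_t$ with right-hand side in $L^2(Q_T)$, which yields $w_t\in L^2(0,T;H^2\cap H^1_0)$.

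The main obstacle is not conceptual but careful bookkeeping: the explicit $e^{CT}$ prefactor in both estimates is essential for the subsequent application in \Cref{Section-nonlinear}, where it will be combined with the control-cost estimate $e^{M/T}$ from \Cref{thm-linear} inside a source-term/fixed-point argument. One must therefore verify that no artificial $T$-dependence sneaks in while absorbing the nonlocal term $\avint y$ or the elliptic coupling $cw$; since the constants associated with these zero-order couplings depend only on $a,b,c,d_1,d_2,\kappa$ and $\Omega$, this tracking is straightforward but should be executed with care.
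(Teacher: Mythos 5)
Your proposal is correct and follows essentially the same route as the paper: energy estimates tested against $(y,z)$ with Gr\"onwall for part \ref{sol-control}, testing against $(y_t,z_t)$ plus elliptic regularity and time-differentiation of the elliptic equation for part \ref{regu-control}. The only (harmless) organizational difference is that you eliminate $w$ via the solution operator $(-\Delta+\kappa)^{-1}$ before doing the estimates, which makes the existence/uniqueness step a bit more explicit than in the paper, where the elliptic equation is instead tested against $w$ alongside the parabolic ones.
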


\begin{proof}
1) Considering regular enough data and multiplying  the equations of \eqref{System-Linear} by  $(y,z,w)$,  we obtain by using Cauchy-Schwarz inequality, that
\begin{equation*}
\begin{aligned}
& \frac{1}{2}\frac{\d}{\dt}	\int_\Omega \left( |y|^2 + |z|^2 \right) + \int_\Omega \left(|\nabla y|^2 + |\nabla z|^2 + |\nabla w|^2 \right) + \kappa \int_\Omega |w|^2 \\
&\leq  \int_\omega\left(| u y| + |vz|\right) + |a|\int_{\Omega} |yz| + |c|\int_\Omega|wz|  +  \int_\Omega (|d_1||yw| + |d_2||zw|) +  C \bigg| \int_\Omega \Big(\avint y \Big) z     \bigg| \\
& \leq \frac{1}{2}\left( \|u\|^2_{L^2(\omega)} +  \|v\|^2_{L^2(\omega)}  \right) + C_\epsilon \|y\|^2_{L^2(\Omega)} + C_\epsilon\|z\|^2_{L^2(\Omega)} + \epsilon\|w\|^2_{L^2(\Omega)} .
\end{aligned}
\end{equation*} 
Taking $\epsilon >0$ small enough, and by means of Poincar\'e inequality, we have 
\begin{align}\label{equation-well-1}
	& \frac{1}{2}\frac{\d}{\dt}	\int_\Omega \left( |y|^2 + |z|^2 \right) + \|y\|^2_{H^1_0(\Omega)} + \|z\|^2_{H^1_0(\Omega)} + \|w\|^2_{H^1_0(\Omega)} \notag \\
	& \leq \frac{1}{2}\left( \|u\|^2_{L^2(\omega)} +  \|v\|^2_{L^2(\omega)}  \right) + C \|y\|^2_{L^2(\Omega)} + C\|z\|^2_{L^2(\Omega)} .
\end{align} 
 By using the Gr\"onwall's lemma, we then have 
\begin{align} \label{well-2}
	\|y\|_{L^\infty(0,T; L^2(\Omega) )} + \|z\|_{L^\infty(0,T; L^2(\Omega))} 
	\leq C e^{CT} \left(\|(y_0, z_0)\|_{[L^2(\Omega)]^2} +  \|(u,v)\|_{[L^2((0,T)\times\omega)]^2}  \right),
\end{align} 
for some $C>0$ that does not depend on $T>0$.

Next, integrating \eqref{equation-well-1}	 over  $(0,T)$ and using \eqref{well-2}, one can obtain 
  \begin{align} \label{well-3}
  		&\|y\|_{L^2(0,T; H^1_0(\Omega) )} + \|z\|_{L^2(0,T; H^1_0(\Omega))}    +   \|w\|_{L^2(0,T; H^1_0(\Omega))} \notag \\
  		&\leq C e^{CT} \left(\|(y_0, z_0)\|_{[L^2(\Omega)]^2} +  \|(u,v)\|_{[L^2((0,T)\times\omega)]^2} 
  		  \right). 
  \end{align} 

\vspace*{.1cm}

To obtain the required estimates for $y_t$ and $z_t$, the idea is to  test the equations of $y$ and $z$ against any $\phi \in H^1_0(\Omega)$ with $\|\phi\|_{H^1_0(\Omega)}\neq 0$, and use the estimates in \eqref{well-2}--\eqref{well-3}. We skip the details here.  


\vspace*{.1cm}
Now, with the regularity $y, z \in L^\infty(0,T; L^2(\Omega))$ in hand, one may obtain more regularity result for $w$ than in $L^2(0,T; H^1_0(\Omega))$. In fact, taking a closer look at the equation 
\begin{align*}
	-\Delta w(t) + \kappa w(t) = d_1 y(t) + d_2 z(t) \ \ \text{in } \Omega, \ \ w(t)=0 \ \ \text{on } \partial \Omega , \ \ t\in (0,T),
	\end{align*}
 we have by the usual elliptic regularity result that $w(t)\in H^2(\Omega)$, and this holds  for almost all $t\in (0,T)$ since $y, z \in L^\infty(0,T; L^2(\Omega))$. Accordingly, we have 
 \begin{align*}
 	\esssup_{t\in (0,T)} \|w(t)\|_{H^2(\Omega)} \leq C \|(y,z)\|_{[L^\infty(0,T; L^2(\Omega))]^2} .
 \end{align*}
Altogether, we finally have the estimate \eqref{regu-1}. 

\vspace*{.1cm} 

2) To obtain higher regularity results,   we test  the first two equations of \eqref{System-Linear} by $(y_t,z_t)$ (with regular enough data) and then using the estimate \eqref{regu-1},  one can obtain  
\begin{align*}
	\|(y, z)\|_{[L^\infty(0,T; H^1_0(\Omega))]^2} + \|(y_t, z_t)\|_{[L^2(Q_T)]^2} 
	 \leq C e^{CT} \left(\|(y_0, z_0)\|_{[H^1_0(\Omega)]^2} +  \|(u,v)\|_{[L^2((0,T)\times\omega)]^2}    \right).
\end{align*}
Then, from the equations of \eqref{System-Linear}, it is not difficult to observe that $y, z \in L^2(0,T; H^2(\Omega))$. Moreover, since $y, z \in L^\infty(0,T; H^1_0(\Omega))$, from the elliptic equation \eqref{System-Linear}$_3$, one has $w\in L^\infty(0,T; H^3(\Omega))$.


On the other hand, since we have $y_t, z_t\in L^2(Q_T)$, from the equation 
\begin{align*}
	-\Delta w_t (t) + \kappa w_t (t)= d_1 y_t(t) + d_2 z_t(t) \ \ \text{in } \Omega, \ \  w_t(t) =0 \ \ \text{ on } \partial \Omega, \ \ t \in (0,T),
\end{align*}
one can conclude 
\begin{align*}
	\|w_t\|_{L^2(0,T; H^2(\Omega)\cap H^1_0(\Omega) )} \leq C(\|y_t\|_{L^2(Q_T)} + \|z_t\|_{L^2(Q_T)} ).
\end{align*}
Accordingly, the regularity estimate \eqref{regu-2} holds.  
\end{proof}

Similar results holds for the adjoint system \eqref{System-adjoint}. More precisely, we state the proposition below. 

\begin{proposition}\label{prop-adjoint}
	There exist  constants $C>0$ independent in $T$ or final  data such that   we have the following results. 
	\begin{enumerate}[label={\arabic*})] 
		\item\label{Item-1} 	For given  data $(\vphi_T,\psi_T)\in [L^2(\Omega)]^2$, there exists unique weak solution $(\vphi,\psi,\theta)$ to \eqref{System-adjoint}, satisfying 
		\begin{align}\label{regu-1-adj} 
			&	\|(\vphi,\psi)\|_{[L^\infty(0,T; L^2(\Omega))]^2} + \|(\vphi,\psi)\|_{[L^2(0,T; H^1_0(\Omega))]^2} + \|(\vphi_t,\psi_t)\|_{[L^2(0,T; H^{-1}(\Omega))]^2} \notag \\
& + \|\theta\|_{L^\infty(0,T; H^2(\Omega) \cap H^1_0(\Omega))}			
	\leq C e^{CT} \|(\vphi_T,\psi_T) \|_{[L^2(\Omega)]^2} .
		\end{align}

		\smallskip 
		
		\item\label{Item-2} On the other hand, if the final data is chosen as $(\vphi_T, \psi_T)\in [H^1_0(\Omega)]^2$, we have the following regularity result
		\begin{align} \label{regu-2-adj} 
			&	\|(\vphi,\psi)\|_{[L^\infty(0,T; H^1_0(\Omega))]^2} + \|(\vphi,\psi)\|_{[L^2(0,T; H^2(\Omega))]^2} + \|(\vphi_t, \psi_t)\|_{[L^2(Q_T)]^2}   \notag \\
& + \|\theta \|_{L^\infty(0,T; H^3(\Omega))}  + \|\theta_t \|_{L^2(0,T; H^2(\Omega) \cap H^1_0(\Omega) )}
		\leq C e^{CT} \|(\vphi_T, \psi_T) \|_{[H^1_0(\Omega)]^2} .
		\end{align}
	\end{enumerate}
\end{proposition}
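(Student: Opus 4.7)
The plan is to mimic the proof of Proposition 2.1 almost verbatim, the only structural novelty being the coupling pattern of \eqref{System-adjoint}: here $\psi$ and $\theta$ form a closed subsystem (the elliptic equation involves only $\psi$), while $\varphi$ is driven by both of them plus a nonlocal term $b\avint\psi$. First I would perform the time reversal $\tilde\varphi(t,\cdot)=\varphi(T-t,\cdot)$, $\tilde\psi(t,\cdot)=\psi(T-t,\cdot)$, $\tilde\theta(t,\cdot)=\theta(T-t,\cdot)$, which recasts \eqref{System-adjoint} as a forward parabolic-parabolic-elliptic system with initial data $(\varphi_T,\psi_T)$. The workhorse of the argument is the elliptic regularity estimate
\begin{equation*}
\|\tilde\theta(t)\|_{H^2(\Omega)\cap H^1_0(\Omega)} \le C\|\tilde\psi(t)\|_{L^2(\Omega)} \quad \text{for a.e.\ } t\in(0,T),
\end{equation*}
which allows one to treat $\tilde\theta$ as a zeroth-order perturbation of $\tilde\psi$.

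For item \ref{Item-1}, I would multiply the $\tilde\psi$ equation by $\tilde\psi$ and use the above elliptic bound to dominate the source term $d_2\tilde\theta$ by $\|\tilde\psi\|_{L^2}$; this produces a closed differential inequality in $\|\tilde\psi\|_{L^2}^2$ and hence, via Grönwall, the bounds $\tilde\psi\in L^\infty(0,T;L^2(\Omega))\cap L^2(0,T;H^1_0(\Omega))$, exactly as in \eqref{well-2}--\eqref{well-3}. Elliptic regularity then upgrades $\tilde\theta$ to $L^\infty(0,T;H^2(\Omega)\cap H^1_0(\Omega))$. Next, multiplying the $\tilde\varphi$ equation by $\tilde\varphi$ and combining Cauchy--Schwarz with the elementary bound $\bigl|\int_\Omega(\avint\tilde\psi)\tilde\varphi\bigr|\le C\|\tilde\psi\|_{L^2}\|\tilde\varphi\|_{L^2}$ for the nonlocal term yields the $L^\infty L^2\cap L^2 H^1_0$ estimate for $\tilde\varphi$. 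The bounds on $\tilde\varphi_t,\tilde\psi_t$ in $L^2(0,T;H^{-1}(\Omega))$ then follow by the standard duality argument (testing each equation against an arbitrary $\phi\in H^1_0(\Omega)$ with $\|\phi\|_{H^1_0(\Omega)}\neq 0$), and everything assembles to \eqref{regu-1-adj}.

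For item \ref{Item-2}, I would follow step 2) of Proposition 2.1 and test the $\tilde\psi$ and $\tilde\varphi$ equations against $\tilde\psi_t$ and $\tilde\varphi_t$ respectively, now using the $L^\infty L^2$ bounds on $\tilde\theta$ and $\avint\tilde\psi$ already obtained to control the right-hand sides. This produces $\tilde\varphi,\tilde\psi\in L^\infty(0,T;H^1_0(\Omega))$ together with $\tilde\varphi_t,\tilde\psi_t\in L^2(Q_T)$; recasting the parabolic equations as elliptic problems in $x$ at each $t$ then yields the $L^2(0,T;H^2(\Omega))$ bounds. The $L^\infty(0,T;H^3(\Omega))$ estimate for $\tilde\theta$ follows immediately from the elliptic equation fed with $\tilde\psi\in L^\infty(0,T;H^1_0(\Omega))$, and differentiating the elliptic equation in time, $-\Delta\tilde\theta_t+\kappa\tilde\theta_t=c\tilde\psi_t$, together with $\tilde\psi_t\in L^2(Q_T)$, gives $\tilde\theta_t\in L^2(0,T;H^2(\Omega)\cap H^1_0(\Omega))$, completing \eqref{regu-2-adj}. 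The only mildly delicate point is resolving the coupling cycle $\tilde\varphi\leftarrow(\tilde\psi,\tilde\theta)$ and $\tilde\psi\leftrightarrow\tilde\theta$; once $\tilde\theta$ is algebraically slaved to $\tilde\psi$ via elliptic regularity, the nonlocal term $b\avint\tilde\psi$ is absorbed in a standard way, and no genuinely new difficulty appears beyond those already handled in Proposition 2.1.
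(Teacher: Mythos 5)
Your proposal is correct and follows essentially the same route as the paper, which does not write out a proof of this proposition but explicitly defers to the argument of \Cref{prop-control}: time reversal, a Gr\"onwall energy estimate on the closed $(\psi,\theta)$ subsystem using the elliptic bound $\|\theta(t)\|_{H^2}\leq C\|\psi(t)\|_{L^2}$, then the $\varphi$ equation with the nonlocal term controlled by $\|\psi\|_{L^2}\|\varphi\|_{L^2}$, and finally testing against time derivatives plus elliptic regularity for the higher-order estimates. Your observation that the coupling order is reversed relative to \eqref{System-Linear} (so one must resolve $\psi$ and $\theta$ before $\varphi$) is exactly the only adaptation needed.
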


	\section{Null-controllability of the linear system}\label{Section-Carleman}
	
	In this section, we will accomplish the proof of \Cref{thm-linear}. The most important step is to  establish a suitable Carleman estimate for the adjoint system \eqref{System-adjoint}.

	\subsection{Carleman weights}\label{sec:carleman_weights}
	We begin with the following result from \cite[Lemma 1.1, Chapter 1]{Fur-Ima}. 
  	There exists a function $\nu \in \C^2(\overline \Omega)$ satisfying   
		\begin{align}	\label{definition_nu}  
		\begin{dcases}
			\nu>0 \quad \text{in } \ \Omega,  \quad \nu = 0  \quad \text{on } \ \partial \Omega, \quad  \max_{\overline \Omega} \nu=1  \\
			|\nabla\nu| \geq \widehat c >0 \quad \text{in } \ \overline{\Omega \setminus \omega} \quad \text{for some } \ \widehat c > 0. 
			\end{dcases}
		\end{align} 
	
Now, for  any $\lambda \geq 2\ln 2$,  we define the weight functions 
	\begin{align}\label{weight_function}
		\alpha(t,x) = \frac{e^{4\lambda } -  e^{\lambda(2+ \nu(x))}}{t(T-t)}	, \quad \xi(t,x) = \frac{e^{\lambda(2+ \nu(x))}}{t(T-t)}, \quad \forall (t,x) \in Q_T.
	\end{align}
We also define 
\begin{align}\label{defi-xi-min}
	  \widehat\xi(t) = \min_{x\in \overline{\Omega}} \xi(t,x) = \frac{e^{2\lambda}}{t(T-t)},  \quad 
	 \xi^*(t) = \max_{x\in \overline{\Omega}} \xi(t,x) = \frac{e^{3\lambda }  }{t(T-t)}, \quad \forall t\in (0,T), 
\end{align}
and 
\begin{align}\label{defi-alpha-max}
	\alpha^*(t) = \min_{x\in \overline{\Omega}} \alpha(t,x) = \frac{e^{4\lambda }-  e^{3\lambda }}{t(T-t)},	\quad 
	 \widehat \alpha(t) = \max_{x\in \overline{\Omega}} \alpha(t,x) = \frac{e^{4\lambda } - e^{2\lambda} }{t(T-t)}, \quad  \forall t\in (0,T). 
\end{align}
	We have the following immediate relations between the weights:
\begin{align}\label{relation-1}
	\widehat \xi \leq \xi \leq \xi^* \  \text{ and } \ e^{-s\widehat \alpha} \leq e^{-s\alpha } \leq e^{-s\alpha^*} \quad \text{in } Q_T.
\end{align}

Moreover, there exists some constant $C>0$ independent in $T$ and $\lambda$, such that one can compute  
\begin{align}\label{esti-t-derivatives}
	\begin{dcases}
	|\widehat \xi^\prime|\leq C T \widehat \xi^{2}  , \ \  	|\widehat \alpha^\prime|\leq C T \widehat \xi^{2} \quad \text{in } \, Q_T , \\
	 |\widehat \xi^{\prime \prime}|\leq C T^2 \widehat \xi^{3} , \ \  \	|\widehat \alpha^{\prime \prime}|\leq C T^2 \widehat \xi^{3} \quad \text{in } \, Q_T .
	 \end{dcases}
\end{align}
We further have the following information: for any $r>1$ and $t\in (0,T)$,
\begin{align*}
	 rs \alpha^*(t)  - (r-1)s \widehat \alpha(t) 
	=& r s \frac{\left[e^{4\lambda }-  e^{3\lambda}\right] }{t(T-t)} - (r-1)s \frac{\left[e^{4\lambda }-  e^{2\lambda} \right]}{t(T-t)}  \\
	=& \frac{s e^{3\lambda}}{t(T-t)} \left( e^{\lambda} - r  \right) + \frac{(r-1)se^{2\lambda}}{t(T-t)} ,
\end{align*}
and thus, it is clear that there exists some $c_0>0$ such that 
\begin{align}\label{condition-max-min}
rs \alpha^*(t)  - (r-1)s \widehat \alpha(t) \geq \frac{c_0 s }{t(T-t)} , \quad  \forall t \in (0,T),
\end{align}
for  $\lambda \geq C$ sufficiently large.

\subsection{Some known Carleman estimates}\label{sec:carleman_ineqs}
Let us write the  Carleman estimate for the heat operator $(\partial_t + \Delta)$, due to the pioneering work by Fursikov and Imanuvilov \cite{Fur-Ima}.  
\begin{lemma}\label{thm-Fur-Ima}
	Let $\alpha$ and $\xi$ be given by \eqref{weight_function}.  Then, there exist  positive constants $C$, $\lambda_0$ and $s_0:=\sigma_0(T+T^{2})$ (with $\sigma_0>0$), depending on $\nu, \omega, \Omega$ 
	 such that for any $\vartheta \in L^2(0,T; H^2(\Omega) \cap H^1_0(\Omega))$, we have 
	\begin{align} \label{Carle_heat}
		s^3\lambda^4 \iint_{Q_T} e^{-2s\alpha} \xi^3 |\vartheta|^2 + s \lambda^2  \iint_{Q_T} e^{-2s\alpha} \xi |\nabla \vartheta|^2 + s^{-1} \iint_{Q_T} e^{-2s\alpha} \xi^{-1} \left(|\vartheta_t|^2 + |\Delta \vartheta|^2 \right)  \notag \\
		\leq C \iint_{Q_T} e^{-2s\alpha} \left|\partial_t \vartheta + \Delta \vartheta \right|^2 + C s^3\lambda^4 \int_0^T \int_{\omega} e^{-2s\alpha} \xi^3 |\vartheta|^2 ,  
	\end{align} 
	for every $\lambda \geq \lambda_0$ and  $s\geq s_0$. 
\end{lemma}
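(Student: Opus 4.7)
The plan is to follow the classical Fursikov--Imanuvilov argument by conjugating the heat operator with the weight $e^{-s\alpha}$. Set $\psi = e^{-s\alpha}\vartheta$ and notice that $\psi$ vanishes on $\partial\Omega$ and at $t=0,T$ because $\alpha$ blows up there. A direct computation gives
\[
e^{-s\alpha}\bigl(\partial_t \vartheta + \Delta\vartheta\bigr) = M_1\psi + M_2\psi + R\psi,
\]
where $M_1\psi = \Delta\psi + s^2|\nabla\alpha|^2\psi + s\alpha_t\psi$ collects the formally symmetric contributions, $M_2\psi = \psi_t + 2s\nabla\alpha\cdot\nabla\psi + 2s(\Delta\alpha)\psi$ collects the antisymmetric ones, and $R\psi$ is a remainder that carries only lower-order terms in $s$ and $\lambda$. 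Taking the $L^2(Q_T)$ norm of this identity yields
\[
\|M_1\psi\|_{L^2}^2 + \|M_2\psi\|_{L^2}^2 + 2\,(M_1\psi, M_2\psi)_{L^2} \lesssim \|e^{-s\alpha}(\partial_t+\Delta)\vartheta\|_{L^2}^2 + \|R\psi\|_{L^2}^2.
\]

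The heart of the matter is to integrate by parts in the cross term $(M_1\psi, M_2\psi)_{L^2(Q_T)}$, expanding it as a sum of several pieces and tracking the powers of $s$, $\lambda$ and $\xi$. Since $\nabla\alpha = -\lambda\xi\nabla\nu$, the dominant positive contributions take the form
\[
s^3\lambda^4 \iint_{Q_T} e^{-2s\alpha}\xi^3 |\nabla\nu|^4 |\psi|^2 \;+\; s\lambda^2 \iint_{Q_T} e^{-2s\alpha}\xi |\nabla\nu|^2 |\nabla\psi|^2,
\]
and, using the lower bound $|\nabla\nu|\geq \widehat c$ on $\overline{\Omega\setminus\omega}$, they absorb the remaining cross and remainder terms outside $\omega$ provided $\lambda\geq \lambda_0$ and $s\geq \sigma_0(T+T^2)$ are large enough. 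The scaling $s\gtrsim T+T^2$ is dictated precisely by \eqref{esti-t-derivatives}, which guarantees that the contributions involving $\alpha_t$, $\alpha_{tt}$ and $R\psi$ can be absorbed uniformly in $T$. What is left is a local residue in $\omega$, which is transferred to the right-hand side of the inequality by adding a term of the form $s^3\lambda^4\int_0^T\!\int_\omega e^{-2s\alpha}\xi^3|\psi|^2$ to both sides.

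To eliminate the $|\nabla\psi|^2$ portion of this residue one performs the standard cutoff argument: pick $\eta\in C_c^\infty(\omega')$ with $\omega\Subset\omega'\Subset\Omega$ and $\eta\equiv 1$ on $\omega$, and integrate by parts in
\[
\int_{\omega'}\eta^2\, e^{-2s\alpha}\xi\,|\nabla\psi|^2 = -\int_{\omega'}\psi\,\nabla\cdot\bigl(\eta^2\, e^{-2s\alpha}\xi\,\nabla\psi\bigr),
\]
absorbing the surplus into the dominant $|\psi|^2$ term already present on the right. Finally, the portion $s^{-1}\iint e^{-2s\alpha}\xi^{-1}(|\vartheta_t|^2+|\Delta\vartheta|^2)$ of \eqref{Carle_heat} is recovered by writing $\vartheta_t = -\Delta\vartheta + (\partial_t+\Delta)\vartheta$, bounding $\Delta\vartheta$ via the already established estimate, and reverting to $\vartheta=e^{s\alpha}\psi$. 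The main obstacle throughout is the careful bookkeeping in the expansion of $(M_1\psi, M_2\psi)$: more than a dozen integration-by-parts terms arise, and each lower-order contribution must be verified to be absorbed simultaneously in $s$ and $\lambda$, uniformly for $s\gtrsim T+T^2$ --- which is exactly what the design of the weight \eqref{weight_function} and the bounds \eqref{esti-t-derivatives} are tailored to guarantee.
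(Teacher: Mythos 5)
The paper does not actually prove this lemma: it is quoted verbatim as the classical Carleman estimate of Fursikov and Imanuvilov and justified only by the citation \cite{Fur-Ima}. So there is no in-paper argument to compare against; what you have written is an outline of the standard proof from the literature, and as an outline it is architecturally correct. The conjugation $\psi=e^{-s\alpha}\vartheta$, the vanishing of $\psi$ at $t=0,T$ and on $\Sigma_T$, the splitting into (formally) symmetric and antisymmetric parts plus a remainder, the identification of the dominant positive terms $s^3\lambda^4\xi^3|\nabla\nu|^4|\psi|^2$ and $s\lambda^2\xi|\nabla\nu|^2|\nabla\psi|^2$ coming from $\nabla\alpha=-\lambda\xi\nabla\nu$, the use of $|\nabla\nu|\geq\widehat c$ on $\overline{\Omega\setminus\omega}$ from \eqref{definition_nu}, the absorption for $\lambda\geq\lambda_0$ and $s\geq\sigma_0(T+T^2)$ driven by the bounds \eqref{esti-t-derivatives}, and the cutoff integration by parts that removes the local $|\nabla\psi|^2$ observation are all exactly the right steps and in the right order.

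Two caveats, neither fatal but both worth stating. First, your write-up is a roadmap rather than a proof: the decisive content of the Fursikov--Imanuvilov argument is precisely the expansion of $(M_1\psi,M_2\psi)_{L^2}$ into its dozen-plus pieces and the term-by-term verification that each one is dominated uniformly for $s\gtrsim T+T^2$; you assert this rather than carry it out, and the exact grouping you propose needs minor adjustment (e.g.\ the coefficient of $(\Delta\alpha)\psi$ must be tuned so that $M_2$ is genuinely antisymmetric, with the surplus pushed into $R\psi$, and $R\psi$ must be checked to be of strictly lower order in $s\lambda\xi$). Second, your recovery of the terms $s^{-1}\iint e^{-2s\alpha}\xi^{-1}(|\vartheta_t|^2+|\Delta\vartheta|^2)$ is slightly out of order: writing $\vartheta_t=-\Delta\vartheta+(\partial_t+\Delta)\vartheta$ presupposes control of $\Delta\vartheta$, and that control is not a consequence of the zero- and first-order estimate already established; it is obtained by keeping $\|M_1\psi\|_{L^2}^2+\|M_2\psi\|_{L^2}^2$ on the left-hand side and isolating $\Delta\psi=M_1\psi-s^2|\nabla\alpha|^2\psi-s\alpha_t\psi$ (and similarly $\psi_t$), bounding the subtracted terms with the lower-order estimates. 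With that step made explicit, and a final density argument to pass from smooth $\vartheta$ to $\vartheta\in L^2(0,T;H^2(\Omega)\cap H^1_0(\Omega))$ with $(\partial_t+\Delta)\vartheta\in L^2(Q_T)$, your sketch matches the standard proof that the paper is implicitly invoking.
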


Next, we present the standard Carleman estimate for the elliptic operator (see e.g. \cite{Fur-Ima}). 
\begin{lemma}\label{thm-elliptic}
		Let $\alpha$ and $\xi$ be given by \eqref{weight_function}.  Then, there exist  positive constants $C$, $\lambda_1$ and $s_1$  depending on $\nu, \omega, \Omega$  such that
for any $\vartheta \in H^2(\Omega) \cap H^1_0(\Omega)$, we have 
	\begin{align}\label{Carle_elliptic}
			s^3\lambda^4 \int_\Omega e^{-2s\alpha} \xi^3 |\vartheta|^2 + s \lambda^2  \int_\Omega e^{-2s\alpha} \xi |\nabla \vartheta|^2
			\leq C \int_\Omega e^{-2s\alpha} \left| \Delta \vartheta \right|^2 + C s^3\lambda^4  \int_{\omega} e^{-2s\alpha} \xi^3 |\vartheta|^2 ,  
	\end{align}
	for every $\lambda \geq \lambda_1$ and  $s\geq s_1$.
	\end{lemma}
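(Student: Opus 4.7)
The inequality \eqref{Carle_elliptic} is an inequality at fixed time, so I treat $t \in (0,T)$ as a parameter and reduce to a classical elliptic Carleman estimate with spatial weights of the form $\alpha(x) = (e^{4\lambda}-e^{\lambda(2+\nu(x))})/\tau$ and $\xi(x) = e^{\lambda(2+\nu(x))}/\tau$, where $\tau = t(T-t)$ plays the role of a rescaling of $s$. Note that $\nabla\alpha = -\nabla\xi = -\lambda\xi\nabla\nu$ and $\Delta\alpha = -\lambda^2\xi|\nabla\nu|^2 - \lambda\xi\Delta\nu$, so every spatial derivative of the weights pulls out a factor of $\lambda\xi$.

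The first step is the standard conjugation. Set $\psi := e^{-s\alpha}\vartheta$, which lies in $H^2(\Omega)\cap H^1_0(\Omega)$ since $\vartheta$ does and the weight is smooth and bounded on $\overline\Omega$. A direct expansion gives
\begin{equation*}
e^{-s\alpha}\Delta\vartheta \;=\; \Delta\psi + 2s\nabla\alpha\cdot\nabla\psi + \bigl(s^2|\nabla\alpha|^2 + s\Delta\alpha\bigr)\psi \;=:\; g.
\end{equation*}
Decompose $g = M_1\psi + M_2\psi$ into a (formally) symmetric and antisymmetric part, typically
\begin{equation*}
M_1\psi = \Delta\psi + s^2|\nabla\alpha|^2\psi, \qquad M_2\psi = 2s\nabla\alpha\cdot\nabla\psi + s\Delta\alpha\psi,
\end{equation*}
and expand $\|M_1\psi\|_{L^2(\Omega)}^2 + \|M_2\psi\|_{L^2(\Omega)}^2 + 2(M_1\psi,M_2\psi)_{L^2(\Omega)} = \|g\|_{L^2(\Omega)}^2$. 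The heart of the proof is the computation of the cross term: integrating by parts using $\psi|_{\partial\Omega}=0$ and collecting leading-order contributions in $\lambda$ and $s$, one obtains positive dominant contributions
\begin{equation*}
s^3\lambda^4\int_\Omega \xi^3|\nabla\nu|^4|\psi|^2 \;+\; s\lambda^2\int_\Omega \xi|\nabla\nu|^2|\nabla\psi|^2,
\end{equation*}
with all remaining error terms of strictly lower order in $s$ or $\lambda$, hence absorbable for $\lambda\geq\lambda_1$ and $s\geq s_1$ large enough.

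On $\overline{\Omega\setminus\omega}$, the property \eqref{definition_nu} guarantees $|\nabla\nu|\geq\widehat c>0$, so the two dominant terms control $s^3\lambda^4\int_{\Omega\setminus\omega}\xi^3|\psi|^2$ and $s\lambda^2\int_{\Omega\setminus\omega}\xi|\nabla\psi|^2$. Extending the integration to all of $\Omega$ produces leftover local integrals on (a slightly enlarged open subset of) $\omega$, which is precisely the observation region in the statement. The gradient local term $\int_{\omega}\xi|\nabla\psi|^2$ is eliminated by the usual cutoff trick: pick $\chi\in\mathcal{C}^\infty_c(\omega)$ with $\chi\equiv 1$ on a smaller set, integrate $\int\chi\xi|\nabla\psi|^2$ by parts, and use Young's inequality to absorb part of it in the $L^2$-term with weight $s^3\lambda^4\xi^3$, leaving only a local $L^2$-term.

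Finally, transferring back to $\vartheta = e^{s\alpha}\psi$ and using $|\nabla\vartheta|^2 \leq C(|\nabla\psi|^2 + s^2\lambda^2\xi^2|\psi|^2)e^{2s\alpha}$ yields \eqref{Carle_elliptic}. The delicate step is the bookkeeping in the cross term $(M_1\psi,M_2\psi)$: many integrals involving $\nabla\alpha$, $\Delta\alpha$, and their products appear after integration by parts, and one must verify that every error term is strictly subordinate, in its power of $s\xi$ or $\lambda$, to the two dominant positive contributions, which is what forces the thresholds $\lambda\geq\lambda_1$ and $s\geq s_1$. Once this careful absorption is done, the localization via $|\nabla\nu|\geq\widehat c$ on $\overline{\Omega\setminus\omega}$ and the cutoff trick for the gradient local term are routine.
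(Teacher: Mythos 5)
Your sketch is correct: it is precisely the classical Fursikov--Imanuvilov argument (conjugation $\psi=e^{-s\alpha}\vartheta$, splitting into $M_1,M_2$, extracting the positive cross terms $s^3\lambda^4\xi^3|\nabla\nu|^4|\psi|^2$ and $s\lambda^2\xi|\nabla\nu|^2|\nabla\psi|^2$, localization via $|\nabla\nu|\geq\widehat c$ on $\overline{\Omega\setminus\omega}$, and the cutoff trick for the local gradient term), applied at each fixed $t$ with $\tau=t(T-t)$ absorbed into $s$. The paper does not prove this lemma at all --- it simply invokes it as the standard elliptic Carleman estimate from \cite{Fur-Ima} --- so your proposal is exactly the argument of the cited reference and is consistent with the paper.
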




We 	also recall the following estimate from \cite[Lemma 6, Section 2.2]{Gue-Takahasi}. 
\begin{lemma}\label{Lemma:carleman-0_T}
	There exists positive constants   $\lambda_2$, $\sigma_2$ and $C$, depending on $\nu, \omega, \Omega$ such that for any $\lambda\geq \lambda_2$, $s\geq s_2:=\sigma_2 T^2$ and $\vartheta \in L^2(0,T)$, we have 
	\begin{align}\label{ineq-q}
		\iintq e^{-2s\alpha} |\vartheta|^2 \leq C \int_0^T \int_{\omega} e^{-2s\alpha}|\vartheta|^2 ,  
	\end{align}
where the function  $\alpha$ is introduced in \eqref{weight_function}. 
\end{lemma}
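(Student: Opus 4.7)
The plan is to exploit the crucial hypothesis that $\vartheta$ depends only on $t$, which turns the claim into a purely pointwise-in-$t$ inequality between two integrals over $\Omega$ and $\omega$ of the weight $e^{-2s\alpha}$. More precisely, since $\vartheta(t)$ can be pulled out of the spatial integral, it suffices to prove
\begin{equation}\label{eq:aux-plan}
\int_{\Omega} e^{-2s\alpha(t,x)}\,\mathrm dx \leq C \int_{\omega} e^{-2s\alpha(t,x)}\,\mathrm dx, \qquad \forall t\in(0,T),
\end{equation}
with $C>0$ independent of $t$. Once \eqref{eq:aux-plan} is established, multiplication by $|\vartheta(t)|^2$ and integration in $t$ yield \eqref{ineq-q} at once.

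To prove \eqref{eq:aux-plan}, I would factor out the $x$-independent piece of the weight by writing
\[
e^{-2s\alpha(t,x)} = e^{-\frac{2s\,e^{4\lambda}}{t(T-t)}}\,\exp\!\left(\frac{2s\,e^{\lambda(2+\nu(x))}}{t(T-t)}\right),
\]
so that the common prefactor cancels and the problem reduces to estimating integrals of the monotone quantity $\exp\!\bigl(\tfrac{2s\,e^{\lambda(2+\nu(x))}}{t(T-t)}\bigr)$. The key geometric input is property \eqref{definition_nu}: because $|\nabla\nu|\geq\widehat c>0$ on $\overline{\Omega\setminus\omega}$ and $\nu=0$ on $\partial\Omega$, the maximum value $\max_{\overline\Omega}\nu=1$ must be attained at some interior point $x_*\in\omega$. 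By continuity of $\nu\in C^2(\overline\Omega)$, there exists a constant $\eta>0$ and an open ball $B\Subset\omega$ centered at $x_*$ such that
\[
\nu(x)\leq 1-\eta \ \text{ on } \overline{\Omega\setminus\omega}, \qquad \nu(x)\geq 1-\tfrac{\eta}{2} \ \text{ on } B.
\]

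Using these bounds, set $A_1:=e^{\lambda(3-\eta)}$ and $A_2:=e^{\lambda(3-\eta/2)}$. Then
\[
\int_{\Omega\setminus\omega} e^{-2s\alpha(t,x)}\,\mathrm dx \;\leq\; |\Omega|\, e^{-\frac{2s\,e^{4\lambda}}{t(T-t)}}\, e^{\frac{2sA_1}{t(T-t)}},
\]
\[
\int_{\omega} e^{-2s\alpha(t,x)}\,\mathrm dx \;\geq\; \int_{B} e^{-2s\alpha(t,x)}\,\mathrm dx \;\geq\; |B|\, e^{-\frac{2s\,e^{4\lambda}}{t(T-t)}}\, e^{\frac{2sA_2}{t(T-t)}}.
\]
Dividing and using $A_2>A_1$ together with the universal bound $\tfrac{1}{t(T-t)}\geq \tfrac{4}{T^2}$, I obtain
\[
\frac{\int_{\Omega\setminus\omega}e^{-2s\alpha(t,x)}\,\mathrm dx}{\int_{\omega}e^{-2s\alpha(t,x)}\,\mathrm dx} \;\leq\; \frac{|\Omega|}{|B|}\,\exp\!\left(-\frac{8s(A_2-A_1)}{T^{2}}\right).
\]
Fixing $\lambda\geq\lambda_2$ so that $A_2-A_1>0$ is a definite constant, and then choosing
\[
\sigma_2 \;\geq\; \frac{\ln(|\Omega|/|B|)}{8(A_2-A_1)},
\]
the ratio above is $\leq 1$ for every $s\geq s_2=\sigma_2 T^2$ and every $t\in(0,T)$. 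Consequently $\int_\Omega e^{-2s\alpha}\,\mathrm dx\leq 2\int_\omega e^{-2s\alpha}\,\mathrm dx$, which gives \eqref{eq:aux-plan} with $C=2$ and completes the proof.

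The only delicate point is the geometric observation that the maximum of $\nu$ is necessarily attained inside $\omega$ (so that $\nu$ is \emph{strictly} smaller on $\overline{\Omega\setminus\omega}$ than on a small ball $B\subset\omega$). Everything else is a direct calculation relying on monotonicity in $\nu$ and the uniform lower bound $\tfrac{1}{t(T-t)}\geq \tfrac{4}{T^{2}}$, which is precisely what forces the scaling $s\geq\sigma_2 T^{2}$.
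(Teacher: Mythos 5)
Your argument is correct, but it takes a genuinely different route from the paper. The paper does not actually prove \Cref{Lemma:carleman-0_T}: it cites \cite[Lemma 6]{Gue-Takahasi} and notes that the estimate follows from a Carleman inequality for the \emph{gradient} operator (\cite[Lemma 3]{Coron-Guerrero}) applied to the $x$-independent function $(t,x)\mapsto\vartheta(t)$, whose gradient term vanishes and leaves precisely \eqref{ineq-q}. You instead give a self-contained elementary proof: you reduce \eqref{ineq-q} to the $t$-uniform comparison $\int_\Omega e^{-2s\alpha}\dx\leq C\int_\omega e^{-2s\alpha}\dx$ and verify it by hand, using the key geometric observation that \eqref{definition_nu} forces the maximum of $\nu$ into $\omega$ (an interior maximum has vanishing gradient, incompatible with $|\nabla\nu|\geq\widehat c$ on $\overline{\Omega\setminus\omega}$), so the weight is exponentially larger on a small ball $B\Subset\omega$ than anywhere on $\Omega\setminus\omega$; together with $1/(t(T-t))\geq 4/T^2$ this correctly explains the scaling $s\geq\sigma_2T^2$. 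The only point to tidy is that your $\sigma_2$ as written depends on $\lambda$ through $A_2-A_1$; since $A_2-A_1=e^{\lambda(3-\eta/2)}-e^{\lambda(3-\eta)}$ is increasing in $\lambda$, you should fix $\sigma_2$ using $\lambda=\lambda_2$ and observe that the resulting choice works uniformly for all $\lambda\geq\lambda_2$, as the statement requires. Your approach buys transparency and independence from the Carleman machinery; the paper's citation buys brevity and reuse of a standard lemma. The underlying mechanism --- only the relative size of the weight on $\omega$ versus $\Omega\setminus\omega$ matters once the spatial gradient plays no role --- is the same in both.
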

The above result is actually a consequence of a Carleman estimate for the gradient operator  which has been proved for instance in \cite[Lemma 3]{Coron-Guerrero} (see also  \cite{Gue-Takahasi}). 


\medskip 

For simplicity, we hereby introduce the following notations. 

\begin{enumerate}[label={\arabic*})] 
\item For any $\vartheta \in L^2(0,T; H^2(\Omega)\cap H^1_0(\Omega)) \cap H^1(0,T; L^2(\Omega))$, we denote
\begin{align}\label{Notation-heat}
		 I_H(s, \lambda;  \vartheta) 
		 : = 	s^3\lambda^4 \iint_{Q_T} e^{-2s\alpha} \xi^3 |\vartheta|^2 + s \lambda^2  \iint_{Q_T} e^{-2s\alpha} \xi |\nabla \vartheta|^2 
		 + s^{-1} \iint_{Q_T} e^{-2s\alpha} \xi^{-1}\left(|\vartheta_t|^2 + |\Delta \vartheta|^2 \right) . 
\end{align}

\item Next, for any $\vartheta \in L^2(0,T; H^2(\Omega)\cap H^1_0(\Omega))$, we denote
\begin{equation}\label{Notation-elliptic}
	\begin{aligned}
		 I_E(s, \lambda;  \vartheta) 
		: = 	s^3\lambda^4 \iint_{Q_T} e^{-2s\alpha} \xi^3 |\vartheta|^2 + s \lambda^2  \iint_{Q_T} e^{-2s\alpha} \xi |\nabla \vartheta|^2  .
	\end{aligned}
\end{equation}
\end{enumerate}

	\subsection{Carleman estimate for the adjoint system}  \label{Subsection-Carleman}

	We now state and prove a Carleman estimate for the adjoint system \eqref{System-adjoint} in presence of the nonlocal term $\displaystyle b\avint \psi$ ($b\neq 0$). We have  made the choice  $d_2\neq 0$ so that the control $u\mathds{1}_\omega$ can indirectly act to the elliptic part $w$ through the state $z$.    
	
\begin{theorem}[Carleman estimate]\label{Thm-Carleman}
	There exist positive constants $\lambda^*$, $s^*:=\sigma^*(T+T^2)$ for some $\sigma^*>0$ and $C$ such that the solution $(\vphi, \psi, \theta)$ to the adjoint system \eqref{System-adjoint}  for  regular enough final data, satisfies  the following Carleman inequality: 
		\begin{align}\label{Carleman-main}
			&I_{H}(s,\lambda; \vphi) + 	I_{H}(s,\lambda; \psi) + 	I_{E}(s,\lambda; \theta) \notag \\
			& \leq  C s^{5}\lambda^{4} \int_0^T \int_\omega e^{-4s\alpha^* + 3s\widehat \alpha}  (\xi^*)^4 |\vphi|^2 +  C s^{11}\lambda^{12}\int_0^T \int_{\omega} e^{-4s\alpha^*+ 2s\widehat \alpha} (\xi^*)^{11} |\psi|^2   ,
		\end{align}
    for all $\lambda \geq \lambda^*$ and $s\geq s^*$. 
\end{theorem}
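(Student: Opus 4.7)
The plan is to apply the Fursikov--Imanuvilov Carleman estimate (\Cref{thm-Fur-Ima}) to the two parabolic equations for $\varphi$ and $\psi$, and the elliptic Carleman estimate (\Cref{thm-elliptic}) pointwise in $t$ (integrated over $(0,T)$) to $\theta$, and then to add the resulting three inequalities. Substituting the source terms from \eqref{System-adjoint} yields global coupling terms of the form $\iintq e^{-2s\alpha}|\psi|^{2}$, $\iintq e^{-2s\alpha}|\theta|^{2}$ and $\iintq e^{-2s\alpha}|\Delta\theta|^{2}=\iintq e^{-2s\alpha}|c\psi-\kappa\theta|^{2}$, each of which is absorbed into $I_{H}(s,\lambda;\psi)+I_{E}(s,\lambda;\theta)$ for $s,\lambda$ large, thanks to the dominant weight $s^{3}\lambda^{4}\xi^{3}$ and the lower bound $\xi\geq\widehat\xi\gtrsim 1/T^{2}$.

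The crucial new ingredient is the nonlocal term $\iintq e^{-2s\alpha}|\avint\psi|^{2}$ inherited from the $\varphi$-equation. Since $\avint\psi$ depends only on $t$, I would first invoke \Cref{Lemma:carleman-0_T} to obtain
\begin{align*}
\iintq e^{-2s\alpha}\left|\avint\psi\right|^{2}\leq C\int_{0}^{T}\!\!\int_{\omega}e^{-2s\alpha}\left|\avint\psi\right|^{2}.
\end{align*}
Then, because $b\neq 0$, the $\varphi$-equation of \eqref{System-adjoint} gives the pointwise identity $b\avint\psi=-\varphi_{t}-\Delta\varphi-a\psi-d_{1}\theta$, so squaring and using Cauchy--Schwarz leads to
\begin{align*}
\int_{0}^{T}\!\!\int_{\omega}e^{-2s\alpha}\left|\avint\psi\right|^{2}\leq \frac{C}{b^{2}}\int_{0}^{T}\!\!\int_{\omega}e^{-2s\alpha}\bigl(|\varphi_{t}|^{2}+|\Delta\varphi|^{2}+|\psi|^{2}+|\theta|^{2}\bigr).
\end{align*}

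The remaining job is to convert the local second-derivative term $\int_{0}^{T}\int_{\omega}e^{-2s\alpha}(|\varphi_{t}|^{2}+|\Delta\varphi|^{2})$ into a local $|\varphi|^{2}$-term, and the local $\int_{0}^{T}\int_{\omega}e^{-2s\alpha}|\theta|^{2}$ into a local $|\psi|^{2}$-term. For the latter I would fix cut-offs $\omega_{0}\subset\subset\omega_{1}\subset\subset\omega$ and $\eta\in C_{c}^{\infty}(\omega_{1})$ with $\eta=1$ on $\omega_{0}$, multiply the elliptic equation $-\Delta\theta+\kappa\theta=c\psi$ by $\eta^{2}e^{-2s\alpha}(\xi^{*})^{p}\theta$ for a suitable exponent $p$, and integrate by parts (no boundary contributions since $\eta$ is compactly supported in $\omega$); Young's inequality then trades $|\theta|^{2}$ for a local $|\psi|^{2}$ plus $|\nabla\theta|^{2}$-contributions that are absorbed into the gradient term of $I_{E}(s,\lambda;\theta)$. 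The $|\varphi_{t}|^{2}+|\Delta\varphi|^{2}$ piece is treated analogously: rewriting $\varphi_{t}+\Delta\varphi=-(a\psi+b\avint\psi+d_{1}\theta)$ and using a localized weight $\eta^{2}e^{-2s\alpha}(\xi^{*})^{q}\varphi$, the second-order and time-derivative pieces are either absorbed into the $s^{-1}\xi^{-1}$-weighted contributions of $I_{H}(s,\lambda;\varphi)$ or converted into a local $|\varphi|^{2}$-term.

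I expect the principal obstacle to be the careful accounting of weights through these reductions: every integration by parts transfers derivatives onto the Carleman weights (producing factors of $s\lambda\xi$) and onto $\eta$, and every use of Young's inequality inflates the polynomial factors in $s,\lambda,\xi$. Upgrading $\alpha,\xi$ on the localized right-hand side to their $x$-independent envelopes $\alpha^{*},\widehat\alpha,\xi^{*}$ from \eqref{defi-xi-min}--\eqref{defi-alpha-max} allows the weights to be pulled out of the spatial integration, at the price of the asymmetric exponent $e^{-4s\alpha^{*}+3s\widehat\alpha}$ and the large powers $(\xi^{*})^{4},(\xi^{*})^{11}$ in \eqref{Carleman-main}. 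The separation \eqref{condition-max-min} then guarantees that, for $\lambda\geq\lambda^{*}$ and $s\geq s^{*}=\sigma^{*}(T+T^{2})$, the remaining cross-terms can all be absorbed, closing the chain of inequalities and yielding the desired estimate.
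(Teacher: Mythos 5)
Your overall skeleton (summing the three Carleman estimates, invoking \Cref{Lemma:carleman-0_T} on the purely time-dependent function $\avint\psi$, and substituting $b\avint\psi=-(\vphi_t+\Delta\vphi+a\psi+d_1\theta)$) matches the paper, but the two reduction steps that carry the real weight of the proof do not work as you propose. First, to remove the local observation term $s^3\lambda^4\int_0^T\int_{\omega_0}e^{-2s\alpha}\xi^3|\theta|^2$ you multiply the \emph{elliptic} equation by $\eta^2e^{-2s\alpha}(\xi^*)^p\theta$ and integrate by parts. Writing $w$ for your weight, the resulting identity is
\begin{equation*}
\int_\Omega\eta^2 w\,|\nabla\theta|^2+\kappa\int_\Omega\eta^2 w\,|\theta|^2
= c\int_\Omega\eta^2 w\,\psi\theta+\frac12\int_\Omega\Delta(\eta^2 w)\,|\theta|^2 ,
\end{equation*}
and the last term is sign-indefinite, supported in $\omega$, and of size $s^2\lambda^2\xi^2$ \emph{relative} to $\int\eta^2 w|\theta|^2$; it is therefore larger than both the quantity you are estimating and the global term $s^3\lambda^4\iintq e^{-2s\alpha}\xi^3|\theta|^2$ available on the left, so it cannot be absorbed anywhere. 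More structurally, the elliptic equation provides no local relation between $\theta|_\omega$ and $\psi|_\omega$ (only the global inverse $(-\Delta+\kappa)^{-1}$). The paper instead uses the \emph{parabolic} relation $\theta=-\tfrac{1}{d_2}(\psi_t+\Delta\psi)$ --- this is exactly where the hypothesis $d_2\neq0$ enters, and your argument never uses it --- and then integrates by parts (together with $\|\theta_t\|_{L^2(\Omega)}\leq C\|\psi_t\|_{L^2(\Omega)}$ from the time-differentiated elliptic equation) to trade the local $\theta$-observation for a local $\psi$-observation.

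Second, the term $\int_0^T\int_\omega e^{-2s\alpha}\bigl(|\vphi_t|^2+|\Delta\vphi|^2\bigr)$ cannot be ``absorbed into the $s^{-1}\xi^{-1}$-weighted contributions of $I_H(s,\lambda;\vphi)$'': that would require $1\leq\epsilon\,s^{-1}\xi^{-1}$, whereas $s\xi\geq s\widehat\xi\gg1$ in the regime $s\geq\sigma^*(T+T^2)$, so the localized term carries a weight that is larger, not smaller, than the corresponding global term on the left. Nor does a localized multiplier $\eta^2e^{-2s\alpha}(\xi^*)^q\vphi$ applied to the $\vphi$-equation bound these second-order quantities by a local $|\vphi|^2$-term; such a multiplier only yields Caccioppoli-type control of $|\nabla\vphi|^2$, and substituting $\vphi_t+\Delta\vphi=-(a\psi+b\avint\psi+d_1\theta)$ back is circular, since that identity is where the term came from. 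The step that actually closes the argument --- and the genuinely new ingredient of the proof, borrowed from \cite{Gue-Takahasi} --- is to integrate by parts in $t$ (resp.\ twice in $x$) so that all derivatives land on one factor, producing $\vphi_{tt}\vphi$ and $\Delta^2\vphi\cdot\vphi$, and then to control $\|\widetilde\rho\,\vphi_{tt}\|_{L^2(Q_T)}$ and $\|\widetilde\rho\,\vphi\|_{L^2(0,T;H^4(\Omega))}$ by a two-step \emph{global} weighted parabolic regularity bootstrap for the full adjoint system (estimates \eqref{weight-esti-1}--\eqref{weight-esti-4}), whose right-hand sides are in turn dominated by the left-hand side of the Carleman inequality. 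This bootstrap is also the source of the specific powers $(\xi^*)^4$, $(\xi^*)^{11}$ and of the exponents $e^{-4s\alpha^*+3s\widehat\alpha}$, $e^{-4s\alpha^*+2s\widehat\alpha}$ appearing in \eqref{Carleman-main}; without it your chain of inequalities does not close.
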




	
	 

	\begin{proof}  

	Let us first  write the individual Carleman estimates for each of  the  equations of \eqref{System-adjoint}. In what follows, according to  Lemma \ref{thm-Fur-Ima},  $\vphi$ satisfies 
	\begin{equation}\label{carleman-phi}
		\begin{aligned}
		I_H(s, \lambda; \vphi) \leq 
		C \iint_{Q_T} e^{-2s\alpha} \bigg(|\psi|^2  +  \left|\avint \psi \right|^2 \bigg)  + C s^3\lambda^4 \int_0^T \int_{\omega} e^{-2s\alpha} \xi^3 |\vphi|^2 ,
		\end{aligned}
	\end{equation}
for every $\lambda \geq \lambda_0$ and $s\geq s_0$.

		The state component  $\psi$ satisfies 		
	\begin{equation}\label{carleman-psi}
			\begin{aligned}
				I_H(s, \lambda; \psi) \leq 
				C \iint_{Q_T} e^{-2s\alpha} |\theta|^2  
				 + C s^3\lambda^4 \int_0^T \int_{\omega} e^{-2s\alpha} \xi^3 |\psi|^2 ,
			\end{aligned}
		\end{equation}
		for every $\lambda \geq \lambda_1$ and $s\geq s_1$.

Finally,  according to Lemma \ref{thm-elliptic}, 
 $\theta$ satisfies  the following estimate,
		\begin{equation}\label{carleman-theta}
		\begin{aligned}
			I_E(s, \lambda; \theta) \leq 
			C \iint_{Q_T} e^{-2s\alpha} |\psi|^2  
			+ C s^3\lambda^4 \int_0^T \int_{\omega} e^{-2s\alpha} \xi^3 |\theta|^2 ,
		\end{aligned}
	\end{equation}
	for every $\lambda \geq \lambda_2$ and $s\geq s_2$.

		\vspace*{.2cm}

Then, there exist positive constants $\lambda_3$ and $\sigma_3$, such that we have    (by virtue of the Carleman estimates \eqref{carleman-phi}, \eqref{carleman-psi} and \eqref{carleman-theta})  
\begin{align}\label{Add-carlemans}
I_H(s,\lambda;  \vphi) + I_H(s,\lambda;\psi) + I_E(s,\lambda; \theta) &\leq C \iint_{Q_T} e^{-2s\alpha} \bigg(|\psi|^2  +  \left|\avint \psi \right|^2  + |\theta|^2  \bigg)  \notag \\
& \ \ + C s^3\lambda^4 \int_0^T \int_{\omega} e^{-2s\alpha} \xi^3 \left(|\vphi|^2 + |\psi|^2 + |\theta|^2 \right) ,
\end{align}
for every $\lambda\geq \lambda_3$ and $s\geq s_3:= \sigma_3(T+T^2)$.


	Observe that 
	\begin{align}
	\iintq e^{-2s\alpha} \left(|\psi|^2 + |\theta|^2 \right) 
	\leq  	
	C T^6  \iintq e^{-2s\alpha} \xi^3 \left(|\psi|^2 + |\theta|^2 \right) ,
	\end{align}
	and thus, the above terms can be absorbed by the associated leading integrals in the left hand side of \eqref{Add-carlemans} for any  $s\geq C T^2$.


\vspace*{.2cm}

Now, we have to find a proper estimate for the nonlocal term of $\psi$ and the observation integral of $\theta$. We begin with the latter. 

\vspace*{.1cm} 
  
\textbf{I. Absorbing the observation integral of $\theta$.}
 Recall that $d_2\neq 0$ and thus from the second equation of \eqref{System-adjoint}, we have  
\begin{align*}
	\theta = -\frac{1}{d_2} \left(\psi_t + \Delta \psi \right). 
\end{align*}
Consider a nonempty open set  $\omega_0 \subset \subset \omega$  and a function 
\begin{align}\label{func-eta}
 \phi \in \C^\infty_c(\omega) , \quad 0\leq \phi \leq 1 \ \ \text{ in } \omega, \quad \phi = 1 \ \ \text{ in } \omega_0 .   
 \end{align}
Recall that, the  Carleman estimates \eqref{carleman-phi}, \eqref{carleman-psi} and \eqref{carleman-theta} can be obtained in the observation domain $\omega_0$ instead of $\omega$, and the same  holds  for  \eqref{Add-carlemans}.  

Using the second equation of \eqref{System-adjoint}, we then have  (since $d_2\neq 0$)
\begin{align}\label{observation-theta}
s^3 \lambda^4 \int_0^T \int_{\omega_0}  e^{-2s\alpha} \xi^3 |\theta|^2 &\leq s^3 \lambda^4 \int_0^T \int_{\omega}  \phi e^{-2s\alpha} \xi^3 |\theta|^2  \notag  \\ 
 &= 	- \frac{1}{d_2} s^3 \lambda^4 \int_0^T \int_{\omega}  \phi e^{-2s\alpha} \xi^3 \theta \, (\psi_t + \Delta \psi)  
 := J_1 + J_2.
\end{align}

-- Let us compute that 
 \begin{align}\label{observation-theta-2}
 |J_1|&=\left|\frac{1}{d_2}	s^3\lambda^4 \int_0^T \int_\omega \phi e^{-2s\alpha} \xi^3 \theta \, \psi_t\right|  
  \notag \\  
 &\leq
  \frac{1}{|d_2|} s^3 \lambda^4 \left|\int_0^T \int_{\omega}\phi (e^{-2s\alpha} \xi^3)_t \theta \psi\right| + \frac{1}{|d_2|} s^3 \lambda^4 \left|\int_0^T \int_\omega \phi e^{-2s\alpha} \xi^3 \theta_t \psi \right| .
 \end{align}
Note that
\begin{align}\label{aux-est-1}
	| (e^{-2s\alpha} \xi^3)_t | \leq C s T e^{-2s\alpha} \xi^{5} .    
\end{align}
Also, by  differentiating the equation \eqref{System-adjoint}$_3$ w.r.t. $t$, we have
\begin{align*}
-\Delta \theta_t + \kappa \theta_t = c\psi_t  \quad \text{in } \Omega, \quad \theta_t =0 \quad \text{on } \partial \Omega , \ \ t\in (0,T). 
\end{align*}
Recall that $\kappa>0$, and  thus solving the above equation, one has
\begin{align}\label{exp-theta-t}
 	\|\theta_t(t, \cdot)\|_{L^2(\Omega)}  \leq C \|\psi_t(t, \cdot)\|_{L^2(\Omega)}, \ \ t \in (0,T) .
\end{align}
Using the information  \eqref{aux-est-1} and \eqref{exp-theta-t} in \eqref{observation-theta-2}, we get for any $\epsilon>0$,
\begin{align}\label{aux-estimate-3}
|J_1|
\leq & C T s^4 \lambda^4 \int_0^T \int_\omega e^{-2s\alpha} \xi^5 |\theta| |\psi| + C s^3 \lambda^4 \int_0^T \int_\omega e^{-2s\alpha} \xi^3 |\theta_t| |\psi| \notag \\
  \leq 
  & C T s^4 \lambda^4 \int_0^T \int_\omega e^{-2s\alpha} \xi^5 |\theta| |\psi| + C s^3 \lambda^4 \int_0^T  e^{-2s\alpha^*} (\xi^*)^3 \|\theta_t\|_{L^2(\omega)} \|\psi\|_{L^2(\omega)} \notag \\
\leq & C T s^4 \lambda^4 \int_0^T \int_\omega e^{-2s\alpha} \xi^5 |\theta| |\psi|
  + C s^3 \lambda^4 \int_0^T  e^{-s\widehat \alpha} (\xi^*)^{-\frac{1}{2}} \|\psi_t\|_{L^2(\Omega)} e^{-2s\alpha^*+s\widehat \alpha} (\xi^*)^{\frac{7}{2}} \|\psi\|_{L^2(\omega)} \notag \\
  \leq & 
  \epsilon s^3\lambda^4 \iintq e^{-2s\alpha}   \xi^3 |\theta|^2  + \epsilon s^{-1} \iint_{Q_T} e^{-2s\widehat \alpha} (\xi^*)^{-1}|\psi_t|^2 
  +\frac{C}{\epsilon} s^7 \lambda^8 \int_0^T \int_{\omega} 
  e^{-4s\alpha^* + 2s\widehat \alpha} (\xi^*)^7 |\psi|^2 ,
\end{align} 
where we have successively used the Cauchy-Schwarz and Young's inequalities, and the definitions \eqref{defi-xi-min}--\eqref{defi-alpha-max}.


\vspace*{.1cm}

-- Next, we focus on the  integral $J_2$ in the right hand side of \eqref{observation-theta}, we have
\begin{align}\label{aux-estimate-4}
|J_2|&=\left|\frac{1}{d_2}	s^3\lambda^4 \int_0^T \int_\omega \phi  e^{-2s\alpha} \xi^3 \theta \, \Delta \psi \right| \notag \\
& = \left|\frac{1}{d_2} s^3 \lambda^4 \int_0^T \int_\omega \nabla (\phi e^{-2s\alpha} \xi^3 \theta) \cdot \nabla \psi \right| \notag \\
&\leq C s^3 \lambda^4 \int_0^T \int_\omega e^{-2s\alpha} \xi^3 |\nabla \theta| |\nabla \psi| + C s^4\lambda^5  \int_0^T \int_\omega e^{-2s\alpha} \xi^4 |\theta| |\nabla \psi| \notag \\
&\leq 
\epsilon s^3 \lambda^4 \iintq e^{-2s\alpha} \xi^3 |\theta|^2 + 
\epsilon s \lambda^2 \iintq e^{-2s\alpha} \xi |\nabla \theta|^2 
+ \frac{C}{\epsilon} s^5 \lambda^6 \int_0^T \int_\omega e^{-2s\alpha} \xi^5 |\nabla \psi|^2 .
\end{align}

Now, fix $\epsilon>0$ small enough in \eqref{aux-estimate-3} and \eqref{aux-estimate-4} so that the integrals with coefficient $\epsilon$ are absorbed by the associated leading integrals in the left hand side of the main estimate \eqref{Add-carlemans}.

 To take care the last integral in the right hand side of \eqref{aux-estimate-4}, we consider the set $\omega_0$ in such a way that there is another  nonempty open set $\omega_1$ verifying  $\omega_0 \subset \subset \omega_1 \subset \subset \omega$ and  (without loss of generality) we can establish the inequality \eqref{aux-estimate-4} in the observation domain $\omega_1$. Then, we choose a smooth function as follows:
$$ \widetilde \phi \in \C^\infty_c(\omega) , \quad 0\leq \widetilde \phi \leq 1 \ \ \text{ in } \omega, \quad \widetilde \phi = 1 \ \ \text{ in } \omega_1 , $$
and we determine that (for some $\varepsilon>0$)
%
%
\begin{align}\label{aux-estimate-5}         
&s^5 \lambda^6 \int_0^T \int_{\omega_1} e^{-2s\alpha} \xi^5 |\nabla \psi|^2 \leq
  s^5 \lambda^6 \int_0^T \int_\omega \widetilde \phi  e^{-2s\alpha} \xi^5 |\nabla \psi |^2   \notag  \\  
&  = s^5\lambda^6\left| \int_0^T \int_\omega  \nabla \cdot (\widetilde \phi e^{-2s\alpha} \xi^5 \nabla \psi)  \psi \right| 
 \notag \\
 & \leq C s^5\lambda^6 \int_0^T \int_\omega e^{-2s\alpha}\xi^5 |\psi \Delta \psi| + C s^6\lambda^7 \int_0^T \int_\omega e^{-2s\alpha} \xi^6 |\psi||\nabla \psi|   
  \notag \\
 & \leq \varepsilon s^{-1} \iintq e^{-2s\alpha} \xi^{-1} |\Delta \psi|^2
 + \varepsilon s\lambda^2 \iintq e^{-2s\alpha} \xi |\nabla \psi|^2
  + \frac{C}{\varepsilon} s^{11} \lambda^{12} \int_0^T \int_\omega  e^{-2s\alpha} \xi^{11}   |\psi|^2 ,
\end{align}
where we have used the fact that 
\begin{align*}
	\left| \nabla_x \big(\widetilde \psi e^{-2s\alpha} \xi^5\big)  \right| \leq C s\lambda e^{-2s\alpha} \xi^6 .
\end{align*}
Now, fix $\varepsilon>0$ small in \eqref{aux-estimate-5} so that the first two integrals in the right hand side can be absorbed in terms of the corresponding leading integrals in the left hand side of \eqref{Add-carlemans}.

\smallskip  

In what follows, using the above estimates, we have the following intermediate inequality:
\begin{multline}\label{Add-carlemans-2} 
		I_H(s,\lambda;  \vphi) + I_H(s,\lambda;\psi) + I_E(s,\lambda; \theta)  
		 \leq C \iint_{Q_T} e^{-2s\alpha}  \left|\avint \psi \right|^2   
	 + C s^3\lambda^4 \int_0^T \int_{\omega} e^{-2s\alpha} \xi^3 |\vphi|^2    \\
	 + C s^{11}\lambda^{12}\int_0^T \int_\omega \left( e^{-2s\alpha} \xi^{11} + e^{-4s\alpha^* + 2s\widehat \alpha} (\xi^*)^7\right) |\psi|^2 , 
\end{multline}
for all $\lambda \geq C^\prime$ and $s\geq C^\prime(T+T^2)$ for some constant $C^\prime>0$.

\vspace*{.1cm}

{\bf II. Absorbing the nonlocal term of $\psi$.} Here, we will find a proper estimate of the nonlocal term sitting in the right hand side of \eqref{Add-carlemans-2}. This is the most important and technical step of our analysis. Since $b \neq 0$, 
we have (from the equation \eqref{System-adjoint}$_1$)
	$$\avint  \psi = -\frac{1}{b} \left(\vphi_t + \Delta \vphi + a\psi + d_1 \theta \right).$$ 
	Using the above relation and  Lemma \ref{Lemma:carleman-0_T}, one has 
\begin{align}\label{observation_nonlocal} 
		&\iintq e^{-2s\alpha} \left|\avint \psi \right|^2\leq	\int_0^T \int_{\omega} e^{-2s\alpha} \left|\avint \psi \right|^2  \notag \\
	 & \leq  C 	\int_0^T \int_{\omega} e^{-2s\alpha} \left( |\vphi_t|^2 + |\Delta \vphi|^2 + |\psi|^2 + |\theta|^2 \right)   \notag \\ 
	&	\leq   C \int_0^T \int_{\omega} e^{-2s\alpha^*} \left( |\vphi_t|^2 + |\Delta  \vphi|^2 \right)  +  C \int_0^T \int_{\omega} e^{-2s\alpha} |\psi|^2 + C \int_0^T \int_{\omega} e^{-2s\alpha} |\theta|^2  ,
\end{align}
since $\alpha^* \leq \alpha$. 

The term $C \int_0^T \int_{\omega} e^{-2s\alpha} |\theta|^2$ can be estimated in a similar manner as described in part I. 
Thus,  it remains to get the estimates for   first and second integrals in the right hand side of \eqref{observation_nonlocal}. This will be done in several steps.

To this end, we need to find some weighted  energy estimate for the solution to \eqref{System-adjoint}. More precisely, we apply a {\em bootstrap argument} to deduce higher regularity estimate for the adjoint states (up to some suitable weights).

	\vspace*{.1cm}

-- \textit{Step 1.} Let us  introduce $\widehat \rho: = e^{-\frac{3}{2} s \widehat \alpha}$ where $\widehat \alpha$ is defined by \eqref{defi-alpha-max}, and consider 
	$$\widehat \vphi : = \widehat \rho \vphi , \quad \widehat\psi:=\widehat  \rho \psi,  \quad \widehat \theta = \widehat \rho \theta ,$$ 
	so that $(\widehat \vphi, \widehat \psi, \widehat \theta)$ satisfy the following set of equations
	\begin{align}\label{Syst-modified-adjoint}
		\begin{dcases}
			- \widehat \vphi_t - \Delta \widehat \vphi  =  a\widehat \psi  + b \avint \widehat \psi  + d_1 \widehat \theta  -  \widehat\rho_t \vphi   &\text{in } Q_T, \\
			-\widehat \psi_t - \Delta \widehat \psi = d_2 \widehat \theta  -  \widehat\rho_t \psi  &\text{in } Q_T, \\
			- \Delta \widehat \theta + \kappa \widehat \theta = c \widehat \psi 	&\text{in } Q_T, \\
			\widehat \vphi = \widehat \psi  = \widehat \theta= 0  &\text{on } \Sigma_T, \\
			(\widehat \vphi, \widehat \psi)(T, \cdot) = (0,0)  &\text{in } \Omega .
		\end{dcases}
	\end{align}
	Thanks to Proposition \ref{prop-adjoint}--Item \ref{Item-2}, we deduce that
	\begin{equation*}
		\begin{aligned}
			\|(\widehat \vphi, \widehat \psi, \widehat \theta) \|_{[L^2(0,T; H^2(\Omega))]^3} + \|(\widehat \vphi_t , \widehat \psi_t, \widehat \theta_t)\|_{[L^2(Q_T)]^3} \leq C \left(\|  \widehat \rho_t \vphi \|_{L^2(Q_T)} +  \|  \widehat \rho_t \psi \|_{L^2(Q_T)} \right) . 
		\end{aligned} 
	\end{equation*}
	Consequently, we have 
	\begin{align}\label{weight-esti-1} 
			&\|\widehat \rho( \vphi, \psi, \theta) \|_{[L^2(0,T; H^2(\Omega))]^3} + \|\widehat \rho (\vphi_t, \psi_t, \theta_t) \|_{[L^2(Q_T)]^3} \notag  \\
			&\leq C \left(\|  \widehat \rho_t \vphi \|_{L^2(Q_T)} +  \|  \widehat \rho_t \psi \|_{L^2(Q_T)} + \|\widehat \rho_t \theta\|_{L^2(Q_T)}  \right) \notag \\
		&	\leq C s T^2 \left(\|  \widehat \rho \vphi \|_{L^2(Q_T)} +  \|  \widehat \rho \psi \|_{L^2(Q_T)} + \|\widehat \rho \theta\|_{L^2(Q_T)}  \right),
	\end{align}         
	for all $s\geq CT^2$, where we have used the fact that $|\widehat \rho_t|\leq C sT^2 \widehat \rho$.

\vspace*{.1cm}
	
-- \textit{Step 2.}	In the next level, we define $\widetilde \rho := \widehat \xi^{-2} \widehat \rho$ ($\widehat \xi$ is defined in \eqref{defi-xi-min}), and consider the equations satisfied by 
	$$\widetilde \vphi : = \widetilde \rho \vphi , \quad \widetilde\psi:=\widetilde  \rho \psi,  \quad \widetilde \theta = \widetilde \rho \theta ,$$ 
	so that $(\widetilde \vphi, \widetilde \psi, \widetilde \theta)$ satisfy the following set of equations
	\begin{align}\label{Syst-modified-adjoint-2}
		\begin{dcases}
			- \widetilde \vphi_t - \Delta \widetilde \vphi  =  a\widetilde \psi  + b \avint \widetilde \psi  + d_1 \widetilde \theta  -  \widetilde\rho_t \vphi   &\text{in } Q_T, \\
			-\widetilde \psi_t - \Delta \widetilde \psi = d_2 \widetilde \theta  -  \widetilde\rho_t \psi  &\text{in } Q_T, \\
			- \Delta \widetilde \theta + \kappa \widetilde \theta = c \widetilde \psi 	&\text{in } Q_T, \\
			\widetilde \vphi = \widetilde \psi  = \widetilde\theta= 0  &\text{on } \Sigma_T, \\
			(\widetilde\vphi, \widetilde \psi)(T, \cdot) = (0,0)  &\text{in } \Omega .
		\end{dcases}
	\end{align}
	From  parabolic regularity  results (see Lemma \ref{Lemma-higher-regul}), we  get
	\begin{align}\label{weight-esti-2} 
			&\|(\widetilde \vphi, \widetilde \psi) \|_{[L^2(0,T; H^4(\Omega))]^2} + \|(\widetilde  \vphi_{tt}, \widetilde \psi_{tt}) \|_{[L^2(Q_T)]^2} \notag  \\
			&\leq C \Big(\|  \widetilde \rho_t \vphi \|_{L^2(0,T; H^2(\Omega))} +  \|  \widetilde \rho_t \psi \|_{L^2(0,T; H^2(\Omega))}  
			+ \|  (\widetilde \rho_t \vphi)_t \|_{L^2(Q_T)} +  \|  (\widetilde \rho_t \psi)_t \|_{L^2(Q_T)} \notag  \\
			& \qquad + \| \widetilde \rho \theta\|_{L^2(0,T; H^2(\Omega)) } + \|(\widetilde \rho \theta)_t\|_{L^2(Q_T)}  + \Big\|\avint\widetilde \rho \psi \Big\|_{L^2(0,T; H^2(\Omega))} + \Big\|\avint(\widetilde \rho \psi )_t\Big\|_{L^2(Q_T)}   \Big) .
	\end{align}    
	But, due to   \eqref{defi-xi-min} and  \eqref{esti-t-derivatives}, we have 
	\begin{align*}
	&	|\widetilde \rho|\leq CT^4 \widehat \rho \leq Cs^2 \widehat \rho , \ \ \
		|\widetilde \rho_t| \leq C s  T \widehat \rho + CT^3 \widehat \rho \leq Cs^2 \widehat \rho,
		 \\
	& 	 |\widetilde \rho_{tt}| \leq C s^2 T^4 \widehat \xi^2 \widehat \rho + C T^2 \widehat \rho + C sT^4 \widehat \xi^2   \widehat \rho \leq C s^4 \widehat \xi^2 \widehat \rho + Cs^2 \widehat \rho,
	\end{align*}
for all $s\geq C(T+T^2)$.

	Using the above information in \eqref{weight-esti-2},   
	 we get, in particular 
	\begin{align}\label{weight-esti-3} 
		&	\|\widetilde \rho \vphi \|_{L^2(0,T; H^4(\Omega))} + \|\widetilde \rho \vphi_{tt} \|_{L^2(Q_T)}\notag  \\
		&  \leq C s^2 \big(\|  \widehat \rho \vphi \|_{L^2(0,T; H^2(\Omega))} +  \|  \widehat \rho \psi \|_{L^2(0,T; H^2(\Omega))}  
			+ \| \widehat \rho \vphi_t \|_{L^2(Q_T)} +  \| \widehat \rho \psi_t \|_{L^2(Q_T)} \big) \notag \\
			& \quad + C s^4\| \widehat \xi^2 \widehat \rho \vphi\|_{L^2(Q_T)} +   C s^2 \|\widehat \rho \vphi\|_{L^2(Q_T)} 
			+ C s^4 \| \widehat \xi^2 \widehat \rho \psi\|_{L^2(Q_T)} +   C s^2 \|\widehat \rho \psi\|_{L^2(Q_T)} 
			\notag \\
			& \quad + Cs^2 \| \widehat \rho \theta\|_{L^2(0,T; H^2(\Omega))} 
			 + C s^2 \|\widehat \rho \theta_t \|_{L^2(Q_T)} .	
	\end{align}   
Applying estimate \eqref{weight-esti-1} from {\em Step 1}, we find 
\begin{equation*}
	\begin{aligned}
		&	\|\widetilde \rho \vphi \|_{L^2(0,T; H^4(\Omega))} + \|\widetilde \rho \vphi_{tt} \|_{L^2(Q_T)} \\
		&\leq  C s^3 T^2 \left(\|\widehat \rho \vphi \|_{L^2(Q_T)} +  \|\widehat \rho \psi\|_{L^2(Q_T)} + \|\widehat \rho \theta\|_{L^2(Q_T)} \right) \\
		& \quad + C s^4\| \widehat \xi^2 \widehat \rho \vphi\|_{L^2(Q_T)} +   C s^2 \|\widehat \rho \vphi\|_{L^2(Q_T)} 
		+ C s^4 \| \widehat \xi^2 \widehat \rho \psi\|_{L^2(Q_T)} +   C s^2 \|\widehat \rho \psi\|_{L^2(Q_T)} ,
	\end{aligned}
	\end{equation*}
 for all $s\geq CT^2$. 
 
Further simplification gives rise to  
 \begin{align}\label{weight-esti-4}
 	\|\widetilde \rho \vphi \|_{L^2(0,T; H^4(\Omega))} + \|\widetilde \rho \vphi_{tt} \|_{L^2(Q_T)} 
 &	 \leq C s^4 \left(\|\widehat \rho \vphi \|_{L^2(Q_T)} +  \|\widehat \rho \psi\|_{L^2(Q_T)} + \|\widehat \rho \theta\|_{L^2(Q_T)} \right) \notag  \\
 &  + C s^4 \left(\| \widehat \xi^2 \widehat \rho \vphi\|_{L^2(Q_T)} + \|\widehat \xi^2 \widehat \rho \psi\|_{L^2(Q_T)} \right) , 
 \end{align}
 for all $s\geq CT^2$.

\vspace*{.1cm}
 
-- \textit{Step 3.} Let us come back to the first two observation integrals in \eqref{observation_nonlocal}.

\vspace*{.1cm} 
 
 $\bullet$ Performing  integration by parts in time, we have 
 \begin{align} \label{main-aux-1}
 		&\int_0^T \int_{\omega_0} e^{-2s\alpha^*} |\vphi_t|^2   
 		 = - 	\int_0^T \int_{\omega_0} \left(e^{-2s\alpha_*} \vphi_t\right)_t \vphi \notag \\
 		&= -\int_0^T \int_{\omega_0} e^{-2s\alpha^*} \vphi_{tt} \vphi + 2s \int_0^T \int_{\omega_0} e^{-2s\alpha^*} \alpha^*_t \vphi_t  \vphi \notag \\
 		& =  -\int_0^T \int_{\omega_0} e^{-2s\alpha^*} \vphi_{tt} \vphi - s \int_0^T \int_{\omega_0} \big(e^{-2s\alpha^*} \alpha^*_t)_t |\vphi|^2 \notag \\
 		& = -\int_0^T \int_{\omega_0} e^{-2s\alpha^*} \vphi_{tt} \vphi + 2s^2 \int_0^T \int_{\omega_0} e^{-2s\alpha^*} |\alpha^*_t|^2 |\vphi|^2 - s \int_0^T \int_{\omega_0} e^{-2s\alpha^*} \alpha^*_{tt} |\vphi|^2 .
 \end{align}
 
 Using the bounds of time derivatives of $\alpha^*$ (as similar as of $\widehat \alpha$ from \eqref{esti-t-derivatives}), we get 
 \begin{align}\label{estimate-phi-t-1}
 	&2s^2 \int_0^T \int_{\omega_0} e^{-2s\alpha^*} |\alpha^*_t|^2 | \vphi|^2 - s \int_0^T \int_{\omega_0} e^{-2s\alpha^*} \alpha^*_{tt} |\vphi|^2     \nonumber \\
 	&\leq Cs^2 T^4 \int_0^T \int_{\omega_0} e^{-2s\alpha^*} \widehat \xi^{4}    |\vphi|^2 
 	\leq Cs^4  \int_0^T \int_{\omega_0} e^{-2s\alpha^*} \widehat \xi^4   |\vphi|^2,
 \end{align}
 for $s\geq CT^2$. 
 
 Next, we recall the weight function $\widetilde \rho$ and the estimate \eqref{weight-esti-4} from the previous step, one has 
 \begin{align} \label{esti-aux-3}
 	&	\int_0^T \int_{\omega_0} e^{-2s\alpha^*}  \vphi_{tt}  \vphi 
 		=\int_0^T \int_{\omega_0} \widetilde \rho \vphi_{tt} \, e^{-2s\alpha^*}(\widetilde  \rho)^{-1} \vphi \notag \\
 		& \leq \epsilon s^{-5} \|\widetilde \rho  \vphi_{tt} \|^2_{L^2(Q_T)} + \frac{C}{\epsilon} s^{5}  \int_0^T \int_{\omega_0} e^{-4 s\alpha^* + 3s\widehat \alpha} \, \widehat \xi^4 |\vphi|^2 \notag \\
 		& \leq C \epsilon s^3 \iint_{Q_T} e^{-3s\widehat \alpha}  \left(  |\vphi|^2 + |\psi|^2 +|\theta|^2\right) + C \epsilon s^3 \iint_{Q_T} e^{-3s\widehat \alpha} \widehat \xi ^4 \left(  |\vphi|^2 + |\psi|^2 \right) \notag 
 		\\ 
 		& \qquad + \frac{C}{\epsilon} s^{5}  \int_0^T \int_{\omega_0} e^{-4 s\alpha^* + 3s\widehat \alpha} \, \widehat \xi^4 |\vphi|^2  \notag \\
 		& \leq C \epsilon s^3\iint_{Q_T} e^{-2s\alpha} \xi^3  \left(  |\vphi|^2 + |\psi|^2 +|\theta|^2\right) + \frac{C}{\epsilon} s^{5}  \int_0^T \int_{\omega_0} e^{-4 s\alpha^* + 3s\widehat \alpha} \, \widehat \xi^4 |\vphi|^2, 
 \end{align}
 for given $\epsilon>0$, where we have used the fact that $\widehat \alpha \geq \alpha$ and $\widehat \xi \leq \xi$.  Note that, the above terms can be dominated by the associated leading terms in the left hand side of \eqref{Add-carlemans-2} for $\epsilon>0$ small.

\vspace*{.1cm}
 
 $\bullet$ Next, we focus on the second integral of \eqref{observation_nonlocal}.  Recall the function $\phi$ given by \eqref{func-eta} and we have 
 \begin{align}\label{obser-laplace}
 	\int_0^T \int_{\omega_0} e^{-2s\alpha^*} |\Delta \vphi|^2 \leq 	\int_0^T \int_{\omega} \phi e^{-2s\alpha^*} |\Delta \vphi|^2	= \sum_{i,j =1}^N \int_0^T \int_{\omega} \phi e^{-2s\alpha^*} \frac{\partial^2 \vphi}{\partial x^2_i}  \frac{\partial^2 \vphi}{\partial x^2_j} . 
 \end{align}
 Integrating by parts w.r.t. $x_j$ ($j=1, ..., N$), we get
 \begin{align*}
 	&  \sum_{i,j =1}^N \int_0^T \int_{\omega} \phi e^{-2s\alpha^*} \frac{\partial^2 \vphi}{\partial x^2_i}  \frac{\partial^2 \vphi}{\partial x^2_j} 
 	 =  - \sum_{i,j = 1}^N \int_0^T \int_{\omega} e^{-2s\alpha^*} \bigg(\phi  \frac{\partial^3 \vphi}{\partial x_j \partial x^2_i}   + \frac{\partial \phi}{\partial x_j}  \frac{\partial^2 \vphi}{\partial x^2_i}   \bigg) \frac{\partial \vphi}{\partial x_j} \\
 	& \qquad \qquad = \sum_{i,j = 1}^N \int_0^T \int_{\omega} e^{-2s\alpha^*} \bigg( \phi  \frac{\partial^4 \vphi}{\partial x^2_j \partial x^2_i} + 2  \frac{\partial \phi}{\partial x_j}  \frac{\partial^3 \vphi}{\partial x_j \partial x^2_i}   
 	+ \frac{\partial^2 \phi}{\partial x^2_j}  \frac{\partial^2 \vphi}{\partial x^2_i} \bigg) \vphi .
 \end{align*}
 Therefore, from  \eqref{obser-laplace} we obtain
 	\begin{align}\label{main-aux-3}
 		\int_0^T \int_{\omega_0} e^{-2s\alpha^*} |\Delta \vphi|^2
 		& \leq 	\int_0^T \int_{\omega} e^{-2s\alpha^*} \left(|\Delta^2 \vphi| + |\nabla (\Delta \vphi)| + |\Delta \vphi| \right)  |\vphi |   \notag 
 		\\
 		& = 	\int_0^T \int_{\omega} \widetilde \rho\left(|\Delta^2 \vphi| + |\nabla (\Delta \vphi)| + |\Delta \vphi| \right) (\widetilde \rho)^{-1} e^{-2s\alpha^*}|\vphi | \notag \\
 		& \leq \epsilon s^{-5} \|\widetilde \rho \vphi \|^2_{L^2(0,T; H^4(\Omega))} + \frac{C}{\epsilon} s^5 \int_0^T \int_\omega e^{-4s\alpha^*+3s\widehat \alpha} \, \widehat \xi ^4 |\vphi|^2 ,   
 	\end{align}
 for some $\epsilon>0$, where we have used the Cauchy-Schwarz and Young's inequalities.

 To estimate  the term $\epsilon s^{-5} \|\widetilde \rho \vphi \|^2_{L^2(0,T; H^4(\Omega))}$, we again use the regularity estimate \eqref{weight-esti-4} as utilized in \eqref{esti-aux-3}, and consequently that terms  can be dominated by the leading terms in the Carleman estimate \eqref{Add-carlemans-2} as long as we choose $\epsilon >0$ small enough.  
 
 \vspace*{.1cm}
 
Finally, by fixing $\epsilon>0$ small enough in \eqref{esti-aux-3} and \eqref{main-aux-3}, and by virtue of \eqref{observation_nonlocal}, \eqref{main-aux-1}--\eqref{estimate-phi-t-1}, the inequality \eqref{Add-carlemans-2} follows
\begin{align*}
		&I_{H}(s,\lambda; \vphi) + 	I_{H}(s,\lambda; \psi) + 	I_{E}(s,\lambda; \theta)   \\
		& \leq  C s^{5}\lambda^{4} \int_0^T \int_\omega e^{-4s\alpha^* + 3s\widehat \alpha}  \xi^4 |\vphi|^2 + C s^{11}\lambda^{12}\int_0^T \int_{\omega} e^{-4s\alpha^*+ 2s\widehat \alpha} (\xi^*)^{11} |\psi|^2  ,
	\end{align*} 
 for any $\lambda \geq \lambda^*$ and $s\geq s^*:=\sigma^*(T+T^2)$ for some positive constants $\lambda^*$, $s^*$ and $C>0$.

 The proof is finished.
 \end{proof}

\subsection{Observability inequality and null-controllability of the linearized system}

In this section, we conclude the proof of \Cref{thm-linear}. From \Cref{Thm-Carleman}, we can obtain the following result.

\begin{proposition}\label{prop_obs}
For every $T>0$, there is a positive constant $M>0$ only depending on $\Omega$, $\omega$, $a$, $b$, $c$, $d_1$, $d_2$, and $\kappa$ such that for every $(\varphi_T,\psi_T)\in [L^2(\Omega)]^2$, the solution $(\varphi,\psi,\theta)$ to the adjoint system \eqref{System-adjoint} satisfies
\begin{equation*}
\|\varphi(0, \cdot)\|^2_{L^2(\Omega)}+\|\psi(0, \cdot)\|^2_{L^2(\Omega)}+\|\theta(0, \cdot)\|^2_{L^2(\Omega)}\leq Me^{M/T}\left(\int_0^T \int_{\omega}|\varphi|^2+\int_0^T \int_{\omega}|\psi|^2 \right).
\end{equation*}
\end{proposition}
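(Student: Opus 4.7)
The plan is to deduce the observability inequality from the Carleman estimate \eqref{Carleman-main} via the classical three-step procedure: fix the Carleman parameters, control the weights on both sides, and propagate the resulting middle-of-the-interval bound down to $t=0$ by a backward-in-time energy estimate. First I would fix $\lambda=\lambda^*$ and set $s=s^*:=\sigma^*(T+T^2)$. On the right-hand side of \eqref{Carleman-main}, the weights $e^{-4s\alpha^*+ks\widehat\alpha}(\xi^*)^\ell$ with $k\in\{2,3\}$ can be bounded uniformly: by \eqref{condition-max-min} one has $-4\alpha^*+k\widehat\alpha \leq -c_0/(t(T-t))$ for $\lambda$ sufficiently large, so optimising the product $r^\ell e^{-c_0 s r}$ in $r = 1/(t(T-t))$ and multiplying by the prefactors $s^5\lambda^4$ and $s^{11}\lambda^{12}$ shows that these weights are bounded by a quantity of order $Me^{M/T}$.

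Second, I would restrict the left-hand side of \eqref{Carleman-main} to the subinterval $[T/4, 3T/4]$, where $e^{-2s\alpha}\xi^3$ (appearing in both $I_H$ and $I_E$) is bounded below by $c\,e^{-C/T}$ for constants $c,C>0$ depending only on $\lambda^*$ and $\Omega$. This yields
\begin{equation*}
\int_{T/4}^{3T/4}\!\!\int_\Omega \bigl(|\vphi|^2 + |\psi|^2 + |\theta|^2\bigr) \leq M e^{M/T}\left(\int_0^T\!\!\int_\omega |\vphi|^2 + \int_0^T\!\!\int_\omega |\psi|^2\right).
\end{equation*}

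Third, I would propagate this bound to $t=0$ using standard energy estimates for the adjoint system \eqref{System-adjoint}: testing the first two equations against $\vphi$ and $\psi$, using $|\avint \psi|\leq C\|\psi\|_{L^2}$ together with the elliptic regularity $\|\theta(t)\|_{H^2}\leq C\|\psi(t)\|_{L^2}$, and applying Gr\"onwall's lemma backward from $t \in [T/4,3T/4]$ down to $0$ yields
\begin{equation*}
\|\vphi(0)\|_{L^2}^2 + \|\psi(0)\|_{L^2}^2 \leq C\bigl(\|\vphi(t)\|_{L^2}^2 + \|\psi(t)\|_{L^2}^2\bigr), \quad t\in [T/4, 3T/4].
\end{equation*}
Integrating in $t$ over $[T/4,3T/4]$, combining with the step-two bound, and handling $\theta(0)$ via $\|\theta(0)\|_{L^2}\leq C\|\psi(0)\|_{L^2}$ from the elliptic equation produces the claimed inequality.

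The main technical obstacle is the careful bookkeeping of the various powers of $s$ and $T$ to ensure that all constants combine into the stated form $Me^{M/T}$ uniformly in $T>0$; the optimisation of $r^\ell e^{-c_0 s r}$ near the temporal boundary, with $s\asymp T+T^2$, is precisely what produces the $e^{M/T}$ blow-up as $T\to 0^+$, so the threshold choice $s=\sigma^*(T+T^2)$ dictated by \Cref{Thm-Carleman} is essential here.
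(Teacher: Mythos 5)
Your proposal is correct and follows exactly the route the paper intends: the paper only sketches this proof, stating that it ``combines energy estimates and the fact that the Carleman weights are appropriately bounded by above in $[0,T]$ and by below in $[T/4,3T/4]$'' and deferring details to \cite[Proposition 2]{HSLB21}, which is precisely your three-step scheme (fix $\lambda=\lambda^*$, $s=s^*$; bound the right-hand weights above via \eqref{condition-max-min} and the left-hand weights below on $[T/4,3T/4]$; propagate to $t=0$ by a backward energy estimate plus elliptic regularity for $\theta$). The only bookkeeping point worth keeping in mind is that the Gr\"onwall factor $e^{CT}$ and the residual power of $s$ are uniform in the form $Me^{M/T}$ only after the usual reduction of the case $T\geq 1$ to observability on a unit subinterval.
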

The proof of this result is standard, it combines energy estimates  and the fact that the Carleman weights are appropriately bounded by above in $[0,T]$ and by below in $[T/4,3T/4]$. We refer the reader to \cite[Proposition 2]{HSLB21} for details in a very similar framework. 

\begin{proof}[Proof of \Cref{thm-linear}] The proof of this result follows classical minimization arguments and a limit procedure. Similar steps have been done in the elliptic-parabolic setting in \cite[Section 3]{FCLM13}.  For that reason, we present a brief proof. Using \Cref{prop_obs}, it is standard to show that the unique minimizer of the quadratic functional
\begin{equation*}
J_\epsilon(\varphi_T,\psi_T)=\frac12 \int_{0}^{T}\int_{\omega}|\varphi|^2 + \frac12 \int_{0}^{T} \int_{\omega}|\psi|^2+\frac{\epsilon}{2}\|(\varphi_T,\psi_T)\|_{[L^2(\Omega)]^2}+\int_{\Omega}\varphi(0,x)y_0+\int_{\Omega}\psi(0,x)z_0, \quad \forall \epsilon>0,
\end{equation*}
yields controls $(u_\epsilon, v_\epsilon)$ which are uniformly bounded and such that the corresponding controlled solution to \eqref{System-Linear}, denoted as $(y_\epsilon,z_\epsilon,w_\epsilon)$, satisfies $\|y_\epsilon(T, \cdot)\|_{L^2(\Omega)}\leq \epsilon$ and $\|z_\epsilon(T, \cdot)\|_{L^2(\Omega)}\leq \epsilon$ for any $\epsilon>0$. Whence, $y_\epsilon(T,\cdot)\to0$ and $z_\epsilon(T,\cdot)\to 0$ strongly in $L^2(\Omega)$ and $(u_\epsilon,v_{\epsilon})\to (u,v)$ weakly in $[L^2((0,T)\times \omega)]^2$ as $\epsilon \to 0$. From the regularity results in \Cref{prop-control} and a standard duality argument, we can prove that up to a subsequence $y_{\epsilon}\to y$, $z_\epsilon\to z$ and $w_\epsilon\to w$ in $L^2(Q_T)$ where $(y,z,w)$ is the solution to \eqref{System-Linear} with the control $(u,v)$ obtained as a limit and satisfy $y(T,\cdot)=z(T,\cdot)=0$. To check that $w(T,\cdot)=0$, we just have to recall that $w(t,\cdot)=(-\Delta +\kappa)^{-1}(d_1y(t,\cdot)+d_2 z(t,\cdot))$ for all $t\in[0,T]$ since $y,z\in \C^0([0,T];L^2(\Omega))$. This ends the proof. 
\end{proof}

\section{Local null-controllability of the nonlinear system}
\label{Section-nonlinear}

This section is devoted to prove the local null-controllability result  for the nonlinear system \eqref{System-main}, i.e., \Cref{thm-nonlinear}, with slight more regular initial data, namely $(y_0, z_0) \in [H^1_0(\Omega)]^2$. The proof will be based on the so-called source term method developed in \cite{Tucsnak-nonlinear} followed by a  fixed point argument and to employ this we shall extensively use the  control cost $Me^{M/T}$ (as $T\to 0^+$) obtained for the linear system given by \Cref{thm-linear}. 

We hereby declare that: throughout this section we assume $0<T\leq 1$ since our main focus is on the small time local null-controllability for the concerned model.

\subsection{Source term method}  Let us develop the source term argument (see \cite{Tucsnak-nonlinear}) in our case. The goal is to prove a null-controllability result for the linearized system \eqref{System-Linear} with additional source terms taken from some suitable weighted space. 

To this end, 
we consider two constants  $p>0$, $q>1$  in such a way that
\begin{align}\label{choice-p_q}
	1<q<\sqrt{2}, \ \ \text{and} \ \ p> \frac{q^2}{2-q^2}.
\end{align}
 Recall that $M>0$ denotes the constant appearing in the control estimate \eqref{control-cost} for the linearized system \eqref{System-Linear}. We now define the functions
\begin{align}\label{def_weight_func}
	\begin{dcases}
		\rho_0(t)= e^{-\frac{pM}{(q-1)(T-t)}}, \\
		\rho_{\Ss}(t)= e^{-\frac{(1+p)q^2 M}{(q-1)(T-t)}},
	\end{dcases}
	\qquad \forall t \in \left[ T\left(1-\frac{1}{q^2}  \right), T\right],
\end{align}
extended in $\left[0, T(1-1/q^2) \right]$ in a constant way  such that the functions $\rho_0$ and $\rho_{\Ss}$ are continuous and non-increasing in $[0,T]$ with $\rho_0(T)=\rho_{\Ss}(T)=0$.

We further consider a weight function  $\rho\in \C^1([0,T])$, given by 
\begin{align}\label{weight-rho}    
	\rho(t) = e^{-\frac{\gamma M}{(q-1)(T-t)} } ,\ \ \forall t \in
	\left[ T\left(1-\frac{1}{q^2}  \right), T\right],  \ \ \text{with }\frac{(1+p)q^2}{2} < \gamma < p, 
\end{align}
which can be  extended as well in $[0, T(1-1/q^2)]$ constantly. In fact, thanks to the choices of $(p,q)$ in \eqref{choice-p_q}, one can ensure that  such $\gamma>0$ exists. Also,
by construction  one may observe that   $\rho(T)=0$, and  
\begin{align} \label{property-rho}
	\rho_0 \leq C \rho, \ \ \rho_{\Ss} \leq C \rho , \ \ |\rho^\prime| \rho_0 \leq C \rho^2, \quad \text{in } [0,T]. 
\end{align}

\begin{remark}\label{Remark-weights}
	With the choice of $\rho$ in \eqref{weight-rho} and $\rho_\Ss$ in \eqref{weight_function},  we compute that 
	\begin{align*}
		\frac{\rho^2(t)}{\rho_\Ss(t)}= e^{\frac{  (1+p)q^2 M - 2\gamma M}{(q-1)(T-t)}}, \quad \forall t \in \left[T\left(1-\frac{1}{q^2}\right) , T\right].
	\end{align*}
	Then, 	due to the choice of $\gamma$ in \eqref{weight-rho} and the fact $(q-1)>0$,  one can observe  that
	\begin{align*}
		\frac{\rho^2(t)}{\rho_{\Ss}(t)} \leq 1, \quad \forall t \in [0,T].
	\end{align*}
Consequently, for any $r>2$, we have 
	\begin{align*}
	\frac{\rho^r(t)}{\rho_{\Ss}(t)} \leq 1, \quad \forall t \in [0,T].
\end{align*}
\end{remark}

With the  weight functions given  by \eqref{def_weight_func}, we now define the following weighted spaces,
\begin{subequations}
	\begin{align}
		\label{space_S}
		\Ss& := \left\{S\in L^2(Q_T) \; \Big| \; \frac{S}{\rho_\Ss} \in L^2(Q_T)  \right\}, \\
		\label{space_Y}
		\Y &:= \left\{(y,z,w)\in [L^2(Q_T)]^3 \; \Big| \; \frac{(y,z,w)}{\rho_0} \in  [L^2(Q_T)]^3   \right\}, \\
		\label{space_V}
		\V& := \left\{ (u,v)\in [L^2((0,T)\times \omega)]^2  \; \Big| \; \frac{(u,v)}{\rho_0}\in [L^2((0,T)\times \omega)]^2  \right\}.
	\end{align}
\end{subequations}
Then, introduce the inner products in the spaces $\Ss$ and $\V$ respectively by 
\begin{align*}
	\big\langle S, \widetilde S \big\rangle_{\Ss}:= \iint_{Q_T} \rho_\Ss^{-2} S \widetilde S  \ \    \text{ and } \ \       \big\langle (u,v), (\widetilde u, \widetilde v) \big\rangle_{\V}:= \int_0^T\int_{\omega}  \rho_0^{-2} \left( u \widetilde u + v \widetilde v \right),
\end{align*}
for any $S, \widetilde S \in \Ss$ and $(u,v), (\widetilde u,\widetilde v) \in \V$. 
The  associated norms in those spaces are  given by 
\begin{align}\label{norm_weighted_space}
	\|S\|_{\Ss} : =\left(\iint_{Q_T} \left|\frac{S}{\rho_{\Ss}} \right|^2  \right)^{1/2}   \ \  \text{ and } \ \  \|(u,v)\|_{\V}:=  \left( \int_0^T\int_{\omega}  \rho_0^{-2} \left( |u|^2 + |v|^2  \right) \right)^{1/2}.
\end{align}

Let us  consider the following  linear control problem,
\begin{align}\label{System-Linear-source}
	\begin{dcases}
		y_t - \Delta y  =  u \mathds{1}_\omega  + S_1
		&\text{in } Q_T, 
			\\
		z_t - \Delta z = v \mathds{1}_\omega +  a y +  b \avint y  + cw  + S_2
		&\text{in } Q_T, 
			\\
		-\Delta w + \kappa w =   d_1 y + d_2 z 
		&\text{in } Q_T, 
		\\
		y = z = w = 0  &\text{on } \Sigma_T, 
		\\
		(y,z)(0, \cdot) = (y_0, z_0)  &\text{in } \Omega,
	\end{dcases}
\end{align}
with source terms $S_1, S_2 \in \Ss$. 

Then, we have the  
  following null-controllability result. 
\begin{proposition}\label{proposition-source-1} 
	For   given initial data  $(y_0, z_0)\in [L^2(\Omega)]^2$, and source terms  $S_1, S_2\in \Ss$, there exists a linear map
	\begin{align}
		\left( (y_0, z_0), S_1, S_2     \right) \in [L^2(\Omega)]^2 \times \Ss\times \Ss \mapsto \left((y,z,w), (u,v)  \right) \in \Y \times \V ,
	\end{align} 
such that $\left((y,z,w), (u,v)  \right)$ solves the set of equations \eqref{System-Linear-source}. 

In addition, they satisfy the following estimate   
\begin{align}\label{estimate-sourse-control}
		\left\| \frac{(y,z)}{\rho_0}  \right\|_{[L^\infty(0,T; L^2(\Omega))]^2}  + 	\left\| \frac{(y,z)}{\rho_0}  \right\|_{[L^2(0,T; H^1_0(\Omega))]^2} 
		+ \left\|\frac{w}{\rho_0}\right\|_{L^\infty(0,T; (H^2\cap H^1_0)(\Omega) ) }
		+ \|(u,v)\|_{\V} 
\notag 	\\
	 \leq C e^{C/T} \left(  \|(y_0, z_0)\|_{[L^2(\Omega)]^2} + \|(S_1, S_2)\|_{\Ss\times \Ss}    \right) ,
\end{align}
for some constant $C>0$ that neither depend on $T$, nor on the initial data and source terms.

In particular, one has 
\begin{align*}
	\left( y,z,w  \right)(T,\cdot) = (0,0,0)  \ \text{ in } \Omega,
\end{align*}
due to the choice of $\rho_0$ in \eqref{def_weight_func}. 
\end{proposition}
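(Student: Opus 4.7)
My plan is to follow the source term method of \cite{Tucsnak-nonlinear} (Liu--Takahashi--Tucsnak), adapted to the parabolic--elliptic setting of \eqref{System-Linear-source}. The idea is to decompose $[0,T]$ geometrically near $T$ and, on each subinterval, split the sought trajectory into a source-driven uncontrolled piece and a source-free controlled piece built from \Cref{thm-linear}. The weights $\rho_0$ and $\rho_\Ss$ in \eqref{def_weight_func} are tailored so that, when the pieces are reassembled, the exponential control cost $Me^{M/\tau}$ is exactly absorbed.

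Concretely, I would set $T_k := T(1-q^{-k})$ for $k\geq 0$, so that $T_0 = 0$, $T_k \nearrow T$, and $\tau_k := T_{k+1}-T_k = Tq^{-k-1}(q-1)$. Starting from $(y_0,z_0)$ and inductively assuming a state $(y_k,z_k)\in[L^2(\Omega)]^2$ at time $T_k$, on $[T_k,T_{k+1}]$ I would decompose $(y,z,w) = (\widehat y,\widehat z,\widehat w) + (\widetilde y,\widetilde z,\widetilde w)$, where $(\widehat y,\widehat z,\widehat w)$ solves \eqref{System-Linear-source} on this interval with source $(S_1,S_2)$, zero control, and zero initial data at $T_k$, while $(\widetilde y,\widetilde z,\widetilde w)$ solves the source-free system \eqref{System-Linear} with initial data $(y_k,z_k)$ at $T_k$, driven to rest at $T_{k+1}$ by a control $(u_k,v_k)$ furnished by \Cref{thm-linear}. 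By the linearity of \eqref{System-Linear-source}, the sum indeed solves the original system on the interval (including the elliptic equation for $w$, which splits accordingly). I would then set $(y_{k+1},z_{k+1}) := (\widehat y,\widehat z)(T_{k+1})$ and iterate. By \Cref{prop-control} applied on $[T_k,T_{k+1}]$ and the very definition of $\Ss$, we have
\[
\|(y_{k+1},z_{k+1})\|_{[L^2(\Omega)]^2} \leq C e^{CT}\rho_\Ss(T_k)\|(S_1,S_2)\|_{\Ss\times \Ss},
\]
so the seeds of the successive iterations decay geometrically. Gluing the pieces by $(u,v):=\sum_k(u_k,v_k)\mathds{1}_{[T_k,T_{k+1}]}$, and similarly for $(y,z,w)$, yields a continuous-at-the-junctions solution of \eqref{System-Linear-source} on $[0,T)$.

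The heart of the matter is then the bookkeeping between $\rho_0^{-2}$, the cost $e^{2M/\tau_k}$, and the source decay $\rho_\Ss(T_{k-1})^2$. A direct computation from \eqref{def_weight_func}, using $T-T_k = Tq^{-k}$ and $1/\tau_k = q^{k+1}/(T(q-1))$, shows the exact cancellation
\[
\Big(\sup_{[T_k,T_{k+1}]}\rho_0^{-2}\Big)\, e^{2M/\tau_k}\,\rho_\Ss(T_{k-1})^2 \leq 1 \qquad \text{for } k\geq 1,
\]
whereas the $k=0$ interval, where $\|(y_0,z_0)\|_{[L^2(\Omega)]^2}$ appears instead of a decayed source-induced state, produces the overall prefactor $e^{C/T}$ in \eqref{estimate-sourse-control}. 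Combining this cancellation with the cost estimate of \Cref{thm-linear} on each $[T_k,T_{k+1}]$ yields
\[
\int_{T_k}^{T_{k+1}}\!\!\int_\omega \rho_0^{-2}(|u_k|^2+|v_k|^2) \leq Ce^{C/T}\|(S_1,S_2)\|^2_{\Ss\times\Ss},
\]
and summing in $k$ gives the $\V$-bound of \eqref{estimate-sourse-control}. A parallel use of the energy estimates of \Cref{prop-control} on each piece, with the same geometric-series argument, provides the weighted $L^\infty_t L^2_x$-bound and $L^2_t H^1_0$-bound for $(y,z)$ and the weighted $L^\infty_t(H^2\cap H^1_0)$-bound for $w$. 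Finally, since $\rho_0(T)=0$ and the iterative estimate forces $\|(y_k,z_k)\|_{[L^2(\Omega)]^2}\to 0$, one has $(y,z)(T,\cdot)=0$ in $L^2(\Omega)$, and $w(T,\cdot)=0$ follows instantly from $(-\Delta+\kappa)w(T,\cdot)=d_1 y(T,\cdot)+d_2 z(T,\cdot)$.

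The hard part will be the fine calibration of the weights: the constraints \eqref{choice-p_q} on $p$ and $q$ are precisely what ensures the above exact cancellation and convergence of the resulting geometric series. The nonlocal coupling $b\avint y$ and the elliptic coupling through $w$ cause no additional difficulty at this stage, since they have already been absorbed into the constants of \Cref{thm-linear} and \Cref{prop-control}; the linear split $(y,z,w)=(\widehat y + \widetilde y, \widehat z + \widetilde z, \widehat w + \widetilde w)$ is compatible with the structure of \eqref{System-Linear-source} and does not see them.
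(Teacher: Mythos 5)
Your architecture is exactly the one the paper has in mind: the authors do not write this proof out but defer it to \cite[Proposition 2.3]{Tucsnak-nonlinear}, and your geometric partition $T_k=T(1-q^{-k})$, the split on each $[T_k,T_{k+1}]$ into a source-driven uncontrolled piece and a source-free piece steered to rest via \Cref{thm-linear} at cost $Me^{M/\tau_k}$, and the weight calibration are precisely that argument. Your cancellation is also computed correctly: with $T-T_k=Tq^{-k}$ and $\tau_k=Tq^{-k-1}(q-1)$ one finds $\rho_0(T_{k+1})^{-2}\,e^{2M/\tau_k}\,\rho_{\Ss}(T_{k-1})^{2}=1$ exactly on the non-truncated range of the weights (and $\leq 1$ where the constant extensions intervene).

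The one step you must tighten is the summation over $k$. Precisely because the cancellation is \emph{exact} — the product equals $1$ and does not decay — your two displayed estimates, which carry the global norm $\|(S_1,S_2)\|_{\Ss\times\Ss}$ on the right-hand side, cannot be summed: they would give $\sum_{k}Ce^{C/T}\|(S_1,S_2)\|^2_{\Ss\times\Ss}=+\infty$. The seeds $(y_k,z_k)$ do not decay geometrically relative to the weights; what makes the series converge is that the seed at $T_{k+1}$ is generated by the source restricted to $(T_k,T_{k+1})$ only, so that
\begin{equation*}
\|(y_{k+1},z_{k+1})\|_{[L^2(\Omega)]^2}\leq Ce^{CT}\,\rho_{\Ss}(T_k)\,\bigl\|(S_1,S_2)/\rho_{\Ss}\bigr\|_{[L^2((T_k,T_{k+1})\times\Omega)]^2}.
\end{equation*}
After the cancellation, each term of the series for $\|(u,v)\|^2_{\V}$ (and likewise for the weighted state norms) is then bounded by the \emph{localized} squared norm of $(S_1,S_2)/\rho_{\Ss}$, and summing over $k$ reconstitutes $\|(S_1,S_2)\|^2_{\Ss\times\Ss}$ because the intervals are disjoint. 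With that standard correction the proof closes; everything else, including the treatment of the elliptic component $w$, the nonlocal coupling, and the identification of $e^{C/T}$ from the $k=0$ interval, is sound.
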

The proof of \Cref{proposition-source-1} can be done by  adapting   \cite[Proposition 2.3]{Tucsnak-nonlinear}  where the crucial point is to make use of the  
precise control cost $Me^{M/T} \|(y_0, z_0)\|_{[L^2(\Omega)]^2}$ obtained in \Cref{thm-linear}, we  also  refer  \cite{BM23,HSLB21} where such result has been proved in the parabolic setup. 

  Let us now recall the weight function $\rho$ given by \eqref{weight-rho}. With this,  we state a regularity result of the controlled trajectory with more regular initial data.
\begin{proposition}\label{proposition-source-regular} 
		 For   given initial data  $(y_0, z_0)\in [H^1_0(\Omega)]^2$ and source terms  $S_1, S_2\in \Ss$, there exists a unique pair of controls $(u,v)\in \V$ of minimal $[L^2((0,T) \times \omega)]^2$-norm, such that the  solution to \eqref{System-Linear-source} satisfies
		\begin{equation}\label{regular-source} 
	\begin{aligned}
	&	\frac{(y,z)}{\rho} \in [L^\infty(0,T; H^1_0(\Omega)  )]^2 \cap  [L^2(0,T; H^2(\Omega)  )]^2  , \ \  
		\frac{(y_t,z_t)}{\rho} \in [L^2(Q_T)]^2, \\
	& \qquad \qquad  	\frac{w}{\rho} \in L^\infty(0,T ; H^3(\Omega) ), \ \ w_t \in L^2(0,T;  H^2(\Omega)\cap H^1_0(\Omega)   ). 
	\end{aligned}
\end{equation}

In addition, the following estimate holds:
	\begin{align}\label{esti-regular-source}
		&	\left\| \frac{(y,z)}{\rho}  \right\|_{[L^\infty(0,T; H^1_0(\Omega))]^2}  + 	\left\| \frac{(y,z)}{\rho}  \right\|_{[L^2(0,T; H^2(\Omega))]^2} + 	\left\| \frac{(y_t,z_t)}{\rho}  \right\|_{[L^2(Q_T)]^2}  \notag   \\  
		   &  
+	\left\| \frac{w}{\rho}  \right\|_{L^\infty(0,T; H^3(\Omega))}  + 	\left\| \frac{w_t}{\rho}  \right\|_{L^2(0,T; (H^2\cap H^1_0)(\Omega))} 
		+ \|(u,v)\|_{\V}  \notag \\
&		\leq C e^{C/T} \left(  \|(y_0, z_0)\|_{[H^1_0(\Omega)]^2} + \|(S_1, S_2)\|_{\Ss\times \Ss}    \right) ,
	\end{align}
	for some constant $C>0$ that neither depend on $T$, nor on the initial data and source terms.  
\end{proposition}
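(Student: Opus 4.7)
The plan is to take the controls $(u,v)$ and the corresponding trajectory $(y,z,w)$ produced by Proposition \ref{proposition-source-1}, and then upgrade the estimate using the extra regularity of $(y_0,z_0)\in [H^1_0(\Omega)]^2$ and the higher-regularity part of Proposition \ref{prop-control} (Item \ref{regu-control}). Uniqueness of the minimal $L^2$-norm control pair is standard and follows from the strict convexity of the HUM-type functional used to construct the controls, so the work reduces to establishing the weighted bound \eqref{esti-regular-source}.

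Concretely, I would introduce the weighted unknowns $\widetilde y:=y/\rho$, $\widetilde z:=z/\rho$, $\widetilde w:=w/\rho$ and divide the three equations of \eqref{System-Linear-source} by $\rho$. Using $y=\rho\widetilde y$ etc., a direct computation shows that $(\widetilde y,\widetilde z,\widetilde w)$ solves a system of exactly the same structure as \eqref{System-Linear-source}, with new controls $(u/\rho,v/\rho)$, new source terms $S_j/\rho+F_j$, where
\begin{equation*}
F_1=-\frac{\rho'}{\rho^2}\,y,\qquad F_2=-\frac{\rho'}{\rho^2}\,z,
\end{equation*}
and new initial data $\bigl(y_0/\rho(0),\,z_0/\rho(0)\bigr)\in [H^1_0(\Omega)]^2$, since $\rho(0)$ is a positive constant by the constant extension of $\rho$ on $[0,T(1-1/q^2)]$.

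Next, I would check that all these new data lie in the spaces required by Proposition \ref{prop-control}--Item \ref{regu-control} (which must be applied in its mild extension to systems with $L^2(Q_T)$ source terms, proved by the same energy method). From $\rho_0\leq C\rho$ in \eqref{property-rho} one obtains $\|u/\rho\|_{L^2((0,T)\times\omega)}\leq C\|u\|_\V$ and similarly for $v$; from $\rho_\Ss\leq C\rho$ one gets $\|S_j/\rho\|_{L^2(Q_T)}\leq C\|S_j\|_\Ss$. The critical step is controlling $F_j$: the property $|\rho'|\rho_0\leq C\rho^2$ in \eqref{property-rho} gives the pointwise bound $|F_j|\leq C|y_j|/\rho_0$, so that $\|F_j\|_{L^2(Q_T)}$ is dominated by $\|(y,z)/\rho_0\|_{[L^\infty(0,T;L^2(\Omega))]^2}$, which is already estimated by \eqref{estimate-sourse-control}. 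Once this is in place, applying Proposition \ref{prop-control}--Item \ref{regu-control} to the system for $(\widetilde y,\widetilde z,\widetilde w)$ yields the regularity claims in \eqref{regular-source} and the quantitative bound \eqref{esti-regular-source}, after combining with the control-cost estimate from Proposition \ref{proposition-source-1} to absorb every right-hand side into $Ce^{C/T}(\|(y_0,z_0)\|_{[H^1_0(\Omega)]^2}+\|(S_1,S_2)\|_{\Ss\times\Ss})$.

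The main obstacle is the handling of the non-constant weight $\rho$: dividing by $\rho$ generates the extra source term $\rho'(y,z)/\rho^2$, which a priori is singular as $t\to T$. The inequality $|\rho'|\rho_0\leq C\rho^2$ from \eqref{property-rho} was specifically tailored for this step and makes those singular-looking terms reduce to quantities already controlled by Proposition \ref{proposition-source-1}; verifying this is the only substantive point, the rest being a standard application of linear parabolic-elliptic regularity.
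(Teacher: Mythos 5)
Your proposal is correct and is essentially the argument the paper intends: the paper defers to \cite[Proposition 2.8]{Tucsnak-nonlinear} together with \Cref{prop-control}--Item \ref{regu-control}, and that reference's proof is exactly your conjugation of \eqref{System-Linear-source} by the weight $\rho$, with the commutator term $-\rho'(y,z)/\rho^2$ absorbed via $|\rho'|\rho_0\leq C\rho^2$ and the already-established bound \eqref{estimate-sourse-control}. The only point worth stating explicitly when writing it up is that \Cref{prop-control}--Item \ref{regu-control} must indeed be invoked in its (routine) extension to systems with $L^2(Q_T)$ right-hand sides, which you have correctly flagged.
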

Again, we  skip the proof here as it can be made by following  the steps of  \cite[Proposition 2.8]{Tucsnak-nonlinear}, and by using the regularity result given in \Cref{prop-control}--\Cref{regu-control}.

\subsection{Application of fixed point argument}\label{Section-fixed point}

In this section, we prove the main theorem of this paper. Here and afterwards,  we denote 
$C_T:= Ce^{C/T}$ for simplicity. 

Now,  assume any initial data $(y_0,z_0)\in [H^1_0(\Omega)]^2$ with 
\begin{align}\label{initial-data-source}
	\|(y_0,z_0)\|_{[H^1_0(\Omega)]^2} \leq \delta ,
\end{align}
where $\delta>0$ will be precisely chosen later, and
introduce the set  
\begin{align}\label{set-S-delta}
	\boldsymbol{\Ss}_\delta : = \left\{ (S_1, S_2) \in \Ss\times \Ss \; | \;  \|(S_1, S_2)\|_{\Ss \times \Ss} \leq \delta   \right\} ,
\end{align} 
where $\Ss$ is defined by \eqref{space_S}. 

 Further, we recall the nonlinearities  appearing in the set of  equations \eqref{System-main}, and define the operator $\mathcal N$ acting on $\boldsymbol{\Ss}_\delta$ as follows:
 \begin{align}\label{operator-N}
 	\mathcal N (S_1, S_2)(t) : = \begin{pmatrix}
 		-\chi_1\nabla\cdot(y \nabla w)  + f_1(y, z, \avint y, \avint z)
 		\vspace*{.15cm} \\
 		-\chi_2\nabla\cdot(z \nabla w)  + f_2(y, z, \avint y, \avint z) ,
 		\end{pmatrix} 
 \end{align}
where $\chi_1, \chi_2>0$, $f_1 , f_2$ are given by \eqref{nonlinear-func}, and $(y,z,w)$ is the solution to \eqref{System-Linear-source} with  initial data  $(y_0,z_0)$ and source terms $(S_1, S_2)$ as given by \eqref{initial-data-source} and \eqref{set-S-delta} respectively.  

\vspace*{.1cm} 
We now proceed with the proof of  main result.

\begin{proof}[Proof of \Cref{thm-nonlinear}]
	In order to conclude the result of \Cref{thm-nonlinear}, it is sufficient to prove that $\mathcal N$ is a contraction map from $\boldsymbol{\Ss}_\delta$ into itself. Throughout the proof, we consider the initial data $(y_0,z_0)\in [H^1_0(\Omega)]^2$ which verifies \eqref{initial-data-source}. Certainly, the regularity results \eqref{regular-source}  holds. 
	
	\smallskip
	
	{\bf Step 1: Stability of the map.}   We have 
	\begin{align}\label{Stability-N} 
		&	\left\| \frac{\mathcal N(S_1, S_2)(t)}{\rho_\Ss(t)}     \right\|_{L^2(\Omega)} \notag  \\
		&\leq C \left( \left\|\frac{\nabla\cdot (y\nabla w)(t)  }{\rho_\Ss(t)}    \right\|_{L^2(\Omega)} +  \left\|\frac{\nabla\cdot (z\nabla w)(t)  }{\rho_\Ss(t)}    \right\|_{L^2(\Omega)} + \sum_{j=1}^2 \left\|\frac{f_j\left(y, z, \avint y, \avint z\right)(t)   }{\rho_\Ss(t)}    \right\|_{L^2(\Omega)} 
		  \right) .
	\end{align}

To this end, as we are in dimension $1\leq N\leq 3$, we shall use  the following Sobolev embeddings 
\begin{align}\label{Sobolev-emb}
H^1(\Omega)\hookrightarrow L^6(\Omega), \ \ \ H^2(\Omega)\hookrightarrow L^\infty(\Omega),
\end{align}
see for instance \cite[Corollary 9.14]{Brezis} and   \cite[Corollaries 9.13 \& 9.15]{Brezis} respectively. 

\smallskip 

$\bullet$ Using \Cref{Remark-weights} and the information \eqref{Sobolev-emb}, it follows that
	\begin{align}\label{Stab-est-1}
	&	\left\|\frac{\nabla\cdot (y\nabla w)(t)  }{\rho_\Ss(t)}    \right\|_{L^2(\Omega)}  \notag \\
		 &\leq  \frac{C\rho^2(t)}{\rho_\Ss(t)}\left[ 
		\left(\int_{\Omega} \left|\frac{y(t)}{\rho(t)}\right|^2 \left|\frac{\Delta w(t)}{\rho(t)} \right|^2 \right)^{1/2}
		+
 			\left(\int_{\Omega} \left|\frac{\nabla y(t)}{\rho(t)}\right|^2 \left|\frac{\nabla w(t)}{\rho(t)} \right|^2 \right)^{1/2} \right]  \notag \\
 	&\leq  C \left\|\frac{y(t)}{\rho(t)}  \right\|_{L^\infty(\Omega)}  \left\|\frac{\Delta w(t)}{\rho(t)}  \right\|_{L^2(\Omega)} + C  \left\|\frac{\nabla y(t)}{\rho(t)}  \right\|_{L^4(\Omega)}  \left\|\frac{\nabla w(t)}{\rho(t)}  \right\|_{L^4(\Omega)} \notag \\
 	& \leq  C \left\|\frac{y(t)}{\rho(t)}  \right\|_{L^\infty(\Omega)}  \left\|\frac{w(t)}{\rho(t)}  \right\|_{H^2(\Omega)} + C  \left\|\frac{\nabla y(t)}{\rho(t)}  \right\|_{H^1(\Omega)}  \left\|\frac{\nabla w(t)}{\rho(t)}  \right\|_{H^1(\Omega)} \notag \\
&  \leq C \left\|\frac{y(t)}{\rho(t)}  \right\|_{H^2(\Omega)}  \left\|\frac{w(t)}{\rho(t)}  \right\|_{H^2(\Omega)} .
	\end{align}

 Similarly, one has 
\begin{align}\label{Stab-est-2}
		\left\|\frac{\nabla\cdot (z\nabla w)(t)  }{\rho_\Ss(t)}    \right\|_{L^2(\Omega)} \leq C \left\|\frac{z(t)}{\rho(t)}  \right\|_{H^2(\Omega)}  \left\|\frac{w(t)}{\rho(t)}  \right\|_{H^2(\Omega)} .
\end{align}	

\smallskip 

$\bullet$ Next, we recall the  nonlinear functions 	$f_1$ and $f_2$ from \eqref{nonlinear-func}, given by 
\begin{equation} 
	\begin{aligned}\label{nonlinear-func-source}
		&f_1 \left(y, z, \avint  y,  \avint z\right) =  \beta_1 \left( y^2+ y z 
		+ y\avint y + y  \avint z   \right) , \\
		&f_2 \left(y, z, \avint  y,  \avint z\right) =    \beta_2\left(  z^2 + yz 
		+   z \avint y + z \avint z   \right) .
	\end{aligned}
\end{equation}
with the functions  $\beta_j\in L^\infty(Q_T)$ for $j=1,2$.  We obtain the estimates for the above nonlinear terms in the following  part.

\begin{itemize} 
\item[(i)]  Using the Sobolev embedding in \eqref{Sobolev-emb} and by means of \Cref{Remark-weights},  one has 
\begin{align}\label{Stab-est-3}
		\left\| \frac{y^2(t)}{\rho_\Ss(t)} \right\|_{L^2(\Omega)} \leq \frac{C \rho^2(t)}{\rho_\Ss(t)} \left\| \frac{y(t)}{\rho(t)} \right\|^2_{L^4(\Omega)} \leq C \left\| \frac{y(t)}{\rho(t)} \right\|^2_{H^1(\Omega)} ,
\end{align}
and similarly, 
\begin{align}\label{Stab-est-4}
	\left\| \frac{z^2(t)}{\rho_\Ss(t)} \right\|_{L^2(\Omega)}  \leq C \left\| \frac{z(t)}{\rho(t)} \right\|^2_{H^1(\Omega)}, \  \ 
	\left\| \frac{y(t)z(t)}{\rho_\Ss(t)} \right\|_{L^2(\Omega)}  \leq C \left\| \frac{y(t)}{\rho(t)} \right\|_{H^1(\Omega)} \left\| \frac{z(t)}{\rho(t)} \right\|_{H^1(\Omega)}.
	\end{align}

\item[(ii)]
 Next, we focus on the estimates  involving  nonlocal terms. We compute 
 \begin{align}\label{Stab-est-6}
 		\left\| \frac{y(t)\avint y(t)}{\rho_\Ss(t)} \right\|_{L^2(\Omega)} &\leq \frac{C \rho^2(t)}{\rho_\Ss(t)} \left( 
 	\int_{\Omega} \left|\frac{y(t)}{\rho(t)}\right|^2 \left| \frac{\avint y(t)}{\rho(t)} \right|^2   	\right)^{1/2} \notag \\
 & \leq C \left( 
 \int_{\Omega} \left|\frac{y(t)}{\rho(t)}\right|^2 \avint \left| \frac{y(t)}{\rho(t)} \right|^2   	\right)^{1/2} \leq C \left\|\frac{y(t)}{\rho(t)}  \right\|^2_{L^2(\Omega)} .
 \end{align}
  Analogously, one can find 
   \begin{align}\label{Stab-est-7}
   &	\left\| \frac{y(t)\avint z(t)}{\rho_\Ss(t)} \right\|_{L^2(\Omega)} \leq C \left\|\frac{y(t)}{\rho(t)}  \right\|_{L^2(\Omega)} \left\|\frac{z(t)}{\rho(t)}  \right\|_{L^2(\Omega)} ,
   \end{align}
and the similar estimates will hold for the terms $z\avint y$ and $z\avint z$. 
\end{itemize}

$\bullet$ Collecting all the estimates from \eqref{Stab-est-1} to \eqref{Stab-est-7} and due to the choices of $\beta_1, \beta_2\in L^\infty(Q_T)$ in \eqref{nonlinear-func-source}, the inequality  \eqref{Stability-N} follows 
\begin{align} \label{Stab-est-8}
		\left\| \frac{\mathcal N(S_1, S_2)(t)}{\rho_\Ss(t)}     \right\|_{L^2(\Omega)}  
	& \leq  C^\prime  \Bigg(\left\|\frac{y(t)}{\rho(t)}  \right\|_{H^2(\Omega)}  \left\|\frac{w(t)}{\rho(t)}  \right\|_{H^2(\Omega)} +  \left\|\frac{z(t)}{\rho(t)}  \right\|_{H^2(\Omega)}  \left\|\frac{w(t)}{\rho(t)}  \right\|_{H^2(\Omega)} \notag \\
 & \qquad \ \  +\left\|\frac{y(t)}{\rho(t)}  \right\|^2_{H^1(\Omega)}  + \left\|\frac{z(t)}{\rho(t)}  \right\|^2_{H^1(\Omega)}
	\Bigg),
\end{align}
for some constant $C^\prime>0$ that does not depend on $T$.

Using the regularity results given by \eqref{regular-source} in \Cref{proposition-source-regular}, we then obtain
\begin{align}\label{esti-invariant}
		\left(\int_0^T \left\| \frac{\mathcal N(S_1, S_2)(t)}{\rho_\Ss(t)}     \right\|^2_{L^2(\Omega)}\right)^{1/2} 
& \leq C^\prime  	\left\|\frac{w}{\rho}  \right\|_{L^\infty(H^2)} \left( \left\|\frac{y}{\rho}  \right\|_{L^2( H^2)} + \left\|\frac{z}{\rho}  \right\|_{L^2(H^2)}    \right)\notag  \\
 &  \qquad \ \ + C^\prime \left\|\frac{y}{\rho}  \right\|^2_{L^\infty( H^1 )} + C^\prime \left\|\frac{z}{\rho}  \right\|^2_{L^\infty( H^1)} .
\end{align}
Now, by virtue of \eqref{esti-regular-source} given by \Cref{proposition-source-regular}, the estimate \eqref{esti-invariant} follows to 
\begin{align}\label{prove-stability}
\left\| \frac{\mathcal N(S_1, S_2)}{\rho_\Ss}     \right\|_{L^2(Q_T)}
 \leq C^\prime C_T^2 \left( \|(y_0, z_0)\|_{[H^1_0(\Omega)]} + \|(S_1, S_2)\|_{\Ss \times \Ss}   \right)^2 
 \leq C^\prime_T \delta^2 , 
\end{align}
with  some updated constant $C^\prime_T>0$. 
Taking $\delta>0$ small enough in \eqref{prove-stability}, we can ensure that $\mathcal N$ stabilizes $\boldsymbol{\Ss}_\delta$.

	\vspace*{.1cm}

{\bf Step 2: Contraction property of the map.} For any $(S_1, S_2)$ and $(\widetilde S_1, \widetilde S_2)$ in $\boldsymbol{\Ss}_\delta$, we denote the associated trajectories by $(y,z,w)$ and $(\widetilde y, \widetilde z, \widetilde w)$ corresponding to the control functions $(u,v)$ and $(\widetilde u, \widetilde v)$ (respectively). This can be ensured by Propositions \ref{proposition-source-1} and \ref{proposition-source-regular}.

The goal is to compute $\left\|\frac{\mathcal N(S_1, S_2) - \mathcal N(\widetilde S_1, \widetilde S_2)}{\rho_\Ss}\right\|_{L^2(Q_T)}$. To find this, we proceed with the following estimates.

$\bullet$	We have 
\begin{align}\label{Difference-1}
	&	\left\| \frac{ \nabla\cdot (y\nabla w)(t) - \nabla\cdot (\widetilde y \nabla \widetilde w)(t) }{\rho_\Ss(t)}  \right\|_{L^2(\Omega)} \notag \\
	& \leq \frac{C \rho^2(t)}{\rho_{\Ss}(t)} \left\|\frac{ y(t) \left(\Delta w(t)  - \Delta \widetilde w(t) \right)}{\rho^2(t)}\right\|_{L^2(\Omega)} 
	+ 
	\frac{C \rho^2(t)}{\rho_{\Ss}(t)} \left\|\frac{ (y(t)  - \widetilde y(t) )\Delta \widetilde w}{\rho^2(t)}\right\|_{L^2(\Omega)} \notag \\
	& \ \ +  \frac{C \rho^2(t)}{\rho_{\Ss}(t)} \left\|\frac{ \nabla y(t) \left(\nabla w(t)  - \nabla \widetilde w(t) \right)}{\rho^2(t)}\right\|_{L^2(\Omega)} 
	 + 
	\frac{C \rho^2(t)}{\rho_{\Ss}(t)} \left\|\frac{ \left(\nabla y(t)  - \nabla \widetilde y(t) \right)\nabla  \widetilde w}{\rho^2(t)}\right\|_{L^2(\Omega)} \notag \\
	& \leq C \left\| \frac{y(t)}{\rho(t)} \right\|_{L^\infty(\Omega)} 
	\left\|\frac{\Delta w(t)  - \Delta \widetilde w(t) }{\rho(t)}\right\|_{L^2(\Omega)} + 
	C \left\| \frac{y(t)-\widetilde y(t)}{\rho(t)} \right\|_{L^\infty(\Omega)} 
	\left\|\frac{\Delta \widetilde w(t)  }{\rho(t)}\right\|_{L^2(\Omega)} \notag \\
	& \ \ + 
	 C \left\| \frac{\nabla y(t)}{\rho(t)} \right\|_{L^4(\Omega)} 
	\left\|\frac{\nabla w(t)  - \nabla \widetilde w(t) }{\rho(t)}\right\|_{L^4(\Omega)} + 
	C \left\| \frac{\nabla y(t)-\widetilde \nabla y(t)}{\rho(t)} \right\|_{L^4(\Omega)} 
	\left\|\frac{\nabla \widetilde w(t)  }{\rho(t)}\right\|_{L^4(\Omega)}  
	\notag \\
&	\leq C \left\| \frac{y(t)}{\rho(t)} \right\|_{H^2(\Omega)} 	\left\|\frac{w(t)  - \widetilde w(t) }{\rho(t)}\right\|_{H^2(\Omega)} + 
C \left\| \frac{y(t)-\widetilde y(t)}{\rho(t)} \right\|_{H^2(\Omega)} 
\left\|\frac{\widetilde w(t)  }{\rho(t)}\right\|_{H^2(\Omega)},
\end{align}
where we have used the standard embeddings given by \eqref{Sobolev-emb}. 


Similar estimation holds for the following quantity, which is 
\begin{align}\label{Difference-2}
		&	\left\| \frac{ \nabla\cdot (z\nabla w)(t) - \nabla\cdot (\widetilde z \nabla \widetilde w)(t) }{\rho_\Ss(t)}  \right\|_{L^2(\Omega)} \notag  \\
		& \leq C \left\| \frac{z(t)}{\rho(t)} \right\|_{H^2(\Omega)} 	\left\|\frac{w(t)  - \widetilde w(t) }{\rho(t)}\right\|_{H^2(\Omega)} + 
		C \left\| \frac{z(t)-\widetilde z(t)}{\rho(t)} \right\|_{H^2(\Omega)} 
		\left\|\frac{\widetilde w(t)  }{\rho(t)}\right\|_{H^2(\Omega)}. 
\end{align}

	\smallskip 

$\bullet$ In this step, we shall find the estimate for 
$$\left\| \frac{f_j\left(y,z , \avint y, \avint z\right)(t) - f_j\left(\widetilde y,\widetilde z , \avint \widetilde y, \avint \widetilde z\right)(t)}{\rho_\Ss(t)}    \right\|_{L^2(\Omega)}, \ \ \ j=1,2. $$

(i) The quadratic terms can be computed as follows:
\begin{align}\label{Difference-11}
\left\| \frac{y^2(t)- \widetilde y^2(t)}{\rho_\Ss(t)}  \right\|_{L^2(\Omega)} \leq C \left\| \frac{y(t)- \widetilde y(t)}{\rho(t)}  \right\|_{H^1(\Omega)} \left( \left\|\frac{y(t)}{\rho(t)}  \right\|_{H^1(\Omega)} + \left\|\frac{\widetilde y(t)}{\rho(t)}  \right\|_{H^1(\Omega)}  \right), 
\\
\label{Difference-12}
		\left\| \frac{z^2(t)- \widetilde z^2(t)}{\rho_\Ss(t)}  \right\|_{L^2(\Omega)} \leq C \left\| \frac{z(t)- \widetilde z(t)}{\rho(t)}  \right\|_{H^1(\Omega)} \left( \left\|\frac{z(t)}{\rho(t)}  \right\|_{H^1(\Omega)} + \left\|\frac{\widetilde z(t)}{\rho(t)}  \right\|_{H^1(\Omega)}  \right), 
\end{align}
and 
\begin{equation}\label{Difference-13}
	\begin{aligned}
	&	\left\| \frac{y(t)z(t)- \widetilde y(t)\widetilde z(t)}{\rho_\Ss(t)}  \right\|_{L^2(\Omega)}\\
	&
	 \leq C \left\| \frac{y(t)- \widetilde y(t)}{\rho(t)}  \right\|_{H^1(\Omega)}  \left\|\frac{z(t)}{\rho(t)}  \right\|_{H^1(\Omega)} 
	 + C \left\| \frac{z(t)- \widetilde z(t)}{\rho(t)}  \right\|_{H^1(\Omega)} 
	 \left\|\frac{\widetilde y(t)}{\rho(t)}  \right\|_{H^1(\Omega)} . 
	\end{aligned}
\end{equation}

\smallskip 
(ii) One may also find the  associated estimates related to the  nonlocal terms; we just write it for one term, and  similar estimation will be true for other related terms.  Indeed, we have 
\begin{align}\label{Difference-14}
		 \left\| \frac{y(t)\avint y(t) - \widetilde y(t) \avint \widetilde y(t) }{\rho_\Ss(t)} \right\|_{L^2(\Omega)} 
		 \leq C \left\|\frac{y(t)}{\rho(t)}  \right\|_{L^2(\Omega)}  \left\| \frac{y(t)-\widetilde y(t)}{\rho(t)} \right\|_{L^2(\Omega)} .
\end{align}

\smallskip 

$\bullet$ 
Thus,   by means of  all the estimates from \eqref{Difference-1} to \eqref{Difference-14}, we have, for some constant  $C^{\prime\prime}>0$, that 
\begin{align}\label{Difference-estimate-L^2}
	&	\left\|\frac{\mathcal N(S_1, S_2)(t) - \mathcal N(\widetilde S_1, \widetilde S_2)(t)}{\rho_\Ss(t)}\right\|_{L^2(\Omega)} 
	\notag \\
	&  \leq C^{\prime\prime} \left\|\frac{w(t) - \widetilde w(t)}{\rho(t)}    \right\|_{H^2(\Omega)} \left( \left\| \frac{y(t)}{\rho(t)} \right\|_{H^2(\Omega)} + \left\| \frac{z(t)}{\rho(t)} \right\|_{H^2(\Omega)}     \right)\notag  \\
	&  \ +
C^{\prime\prime}	\left\| \frac{\widetilde w(t)}{\rho(t)} \right\|_{H^2(\Omega)} \left( \left\|\frac{y(t) - \widetilde y(t)}{\rho(t)}    \right\|_{H^2(\Omega)}  + \left\|\frac{z(t) - \widetilde z(t)}{\rho(t)}    \right\|_{H^2(\Omega)} \right) \notag \\
& \  + C^{\prime\prime} \left( \left\|\frac{y(t)- \widetilde y(t)}{\rho(t)}  \right\|_{H^1(\Omega)} + \left\|\frac{z(t)- \widetilde z(t)}{\rho(t)}  \right\|_{H^1(\Omega)}   \right)  \notag \\ 
&\ \ \ \ \quad \times  \left( \left\| \frac{y(t)}{\rho(t)} \right\|_{H^1(\Omega)} 
+ \left\| \frac{z(t)}{\rho(t)} \right\|_{H^1(\Omega)}  
+ \left\| \frac{\widetilde y(t)}{\rho(t)} \right\|_{H^1(\Omega)}  + \left\| \frac{\widetilde z(t)}{\rho(t)} \right\|_{H^1(\Omega)}  \right) .
\end{align}  
Consequently, we get 
\begin{align}\label{Difference-estimate-L^2-Q}
	&	\left\|\frac{\mathcal N(S_1, S_2) - \mathcal N(\widetilde S_1, \widetilde S_2)}{\rho_\Ss}\right\|_{L^2(Q_T)}  \notag \\
	&\leq  	C^{\prime \prime} \left\|\frac{w- \widetilde w}{\rho}    \right\|_{L^\infty(H^2)} \left( \left\| \frac{y}{\rho} \right\|_{L^2(H^2)} + \left\| \frac{z}{\rho} \right\|_{L^2(H^2)}     \right) \notag \\
& 	+ C^{\prime \prime} \left\|\frac{w}{\rho}    \right\|_{L^\infty(H^2)} \left( \left\| \frac{y-\widetilde y}{\rho} \right\|_{L^2(H^2)} + \left\| \frac{z-\widetilde z}{\rho} \right\|_{L^2(H^2)}     \right) \notag \\
	& \ + C^{\prime \prime}  \left( \left\| \frac{y-\widetilde y}{\rho} \right\|_{L^2(H^1)} + \left\| \frac{z-\widetilde z}{\rho} \right\|_{L^2(H^1)}   \right) \notag  \\
&	\quad \quad \times \left(  \left\|\frac{y}{\rho} \right\|_{L^\infty(H^1)} +   \left\|\frac{z}{\rho} \right\|_{L^\infty(H^1)} +   \left\|\frac{\widetilde y}{\rho} \right\|_{L^\infty(H^1)} +  \left\|\frac{\widetilde z}{\rho} \right\|_{L^\infty(H^1)} \right).
\end{align} 

Let us recall the choices of initial data $(y_0, z_0)$  given by \eqref{initial-data-source}  and source terms $(S_1, S_2), (\widetilde S_1, \widetilde S_2) \in \boldsymbol{\Ss}_\delta$. Also, due to the linearity of the solution map given by \Cref{proposition-source-regular},  we have
\begin{align}\label{prop-contraction}
		\left\| \frac{\mathcal N(S_1,S_2) - \mathcal N(\widetilde S_1, \widetilde S_2)}{\rho_\Ss}  \right\|_{L^2(Q_T)} 
		&\leq C^{\prime \prime} C^2_T \, \delta   \left\|(S_1, S_2) - (\widetilde S_1, \widetilde S_2 )  \right\|_{L^2(Q_T)} \notag \\
	& \leq 
	\frac{1}{2}\left\|(S_1, S_2) - (\widetilde S_1, \widetilde S_2 )  \right\|_{L^2(Q_T)},
\end{align}
for small enough $\delta>0$, and this ensures  the contraction property of the map $\mathcal N$ in the closed ball $\Ss_\delta$.  
   
\smallskip 

Therefore, by Banach fixed point theorem, there exists a unique element $(S^*_1, S^*_2)\in \boldsymbol{\Ss}_\delta$ which is the fixed point of the map $\mathcal N$ in $\boldsymbol{\Ss}_\delta$. 
Then, by Propositions \ref{proposition-source-1} and \ref{proposition-source-regular},  there exists 
a solution-control pair $\left( (y^*,z^*,w^*), (u^*, v^*) \right)\in \Y \times \V$ for the system \eqref{System-Linear-source} (consequently, for the nonlinear system \eqref{System-main})  verifying the  estimates:
     	\begin{align}\label{esti-solution-nonlinear}
     		&	\left\| \frac{(y^*,z^*)}{\rho}  \right\|_{[L^\infty(0,T; H^1_0(\Omega))]^2}  + 	\left\| \frac{(y^*,z^*)}{\rho}  \right\|_{[L^2(0,T; H^2(\Omega))]^2} + 	\left\| \frac{(y^*_t,z^*_t)}{\rho}  \right\|_{[L^2(Q_T)]^2}   \notag  \\     &  
     		+	\left\| \frac{w^*}{\rho}  \right\|_{L^\infty(0,T; H^3(\Omega))}  + 	\left\| \frac{w^*_t}{\rho}  \right\|_{L^2(0,T; (H^2\cap H^1_0)(\Omega))} 
     		+ \|(u^*,v^*)\|_{\V} \notag   \\
     		&		\leq C^*_T \left(  \|(y_0, z_0)\|_{[H^1_0(\Omega)]^2} + \|(S^*_1, S^*_2)\|_{\Ss\times \Ss}    \right) \leq 2C^*_T \delta, 
     \end{align}
for some constant $C^*_T>0$. Moreover, due to the choice of $\rho$ given by \eqref{weight-rho}, it is clear that 
\begin{align*}
	\left( y^*(T,x) , z^*(T,x), w^*(T,x) \right) = (0,0,0), \quad \forall x\in \Omega. 
\end{align*}
This completes the proof of \Cref{thm-nonlinear}.
\end{proof}

\begin{remark}\label{Remark-Nonlinear-Main}
As it has been mentioned in \Cref{Remark-Nonlinear-Intro}, one can allow more general nonlinearities  like \eqref{Nonliner-general}  in the system \eqref{System-main}.  For instance,  if we consider the nonlinear function $y^4$ (i.e., $k=4$, $l=0$ according to \eqref{Nonliner-general}), the required estimate to prove the stability of the map $\mathcal N$ (see \eqref{operator-N}) would be 
\begin{align*}
		\left\| \frac{y^4(t)}{\rho_{\Ss}(t)}  \right\|_{L^2(\Omega)}  = 	\frac{\rho^4(t) }{\rho_\Ss(t)} \left(\int_\Omega \frac{y^8(t)}{\rho^8(t)}\right)^{1/2}
		&	\leq C	\left\| \frac{y(t)}{\rho(t)}  \right\|_{L^\infty(\Omega)} 
		\left\| \frac{y(t)}{\rho(t)}  \right\|^3_{L^6(\Omega)}  \notag  \\
		& \leq C \left\| \frac{y(t)}{\rho(t)}  \right\|_{H^2(\Omega)}
		\left\| \frac{y(t)}{\rho(t)}  \right\|^3_{H^1(\Omega)} ,		
	\end{align*}
	by using the fact that $\frac{\rho^4(t)}{\rho_{\Ss}(t)}\leq 1$ for all $t\in [0,T]$, and the Sobolev embeddings \eqref{Sobolev-emb}, and that
	\begin{align*}
		\left\| \frac{y^4}{\rho_{\Ss}}  \right\|_{L^2(Q_T)} \leq 	\left\| \frac{y}{\rho}  \right\|^3_{L^\infty( H^1)} 	\left\| \frac{y}{\rho}  \right\|_{L^2( H^2)} .
	\end{align*}
	On the other hand, to prove the contraction property of $\mathcal N$, the additional estimate would be 
	\begin{align*}
		\left\| \frac{y^4  - \widetilde y^4}{\rho_{\Ss}}   \right\|_{L^2(Q_T)} \leq C 	\left\| \frac{y  - \widetilde y}{\rho}   \right\|_{L^2(H^2)} \left( \left\| \frac{y}{\rho}   \right\|^3_{L^\infty(H^1)}   + \left\| \frac{\widetilde y}{\rho}   \right\|^3_{L^\infty(H^1)} \right) . 
	\end{align*}
\end{remark}

\section{Controllability results with only one control}\label{sec:further}


\subsection{Setting}  
Before going to the main results of this section, we recall the notation from Subsection \ref{Subsection-spectral-introduction}, that is, 
\begin{equation}\label{eq:eig_prob}
\phi_k(x)=\sin(k\pi x), \ \  x\in (0,1), \quad \lambda_k=k^2\pi^2, \quad \forall k\in\mathbb N^*,
\end{equation}
are the eigenfunctions and eigenvalues of the Laplace operator in 1-d with homogenous Dirichlet boundary conditions. We write $\Lambda=\{\lambda_k\}_{k\geq 1}\subset \mathbb R_+$, $\Phi=\{\phi_k\}_{k\geq 1}\subset L^2(0,1)$, and consider the families of eigenfunctions
\begin{equation}\label{constr-Phi}
\Phi_e:=\{\phi_{2k}\in \Phi: k\in\mathbb N^*\} \quad\text{and}\quad \Phi_o:=\{\phi_{2k-1}\in \Phi: k\in\mathbb N^*\}.
\end{equation}
Clearly $\Phi=\Phi_{e}\cup \Phi_o$ and
\begin{equation}\label{mean-zero-Phi-e}
  \int_0^1 \phi =0, \quad \forall \phi\in\Phi_e. 
\end{equation}
We further recall the spaces
\begin{equation*}
	\mathcal{H}_{e}:=\textnormal{span}\left\{\phi \in \Phi_e \right\} \text{ in } L^2(0,1) \quad\text{and}\quad \mathcal{H}_{o}:=\textnormal{span}\left\{\phi\in \Phi_o \right\} \text{ in } L^2(0,1).
\end{equation*}

\vspace*{.1cm}

Let us now prescribe the following points which will be used throughout this section.
\begin{itemize} 
\item We  define the operator $\A: D(\A) \to [L^2(0,1)]^2$ such that 
\begin{align}\label{def-A}
\A \begin{bmatrix} \zeta \\ \eta \end{bmatrix} = \begin{bmatrix}  - \zeta_{xx}  \\ 
- \eta_{xx} - a \zeta - b \int_0^1 \zeta   \end{bmatrix} 
\end{align}
with $D(\A)= [H^2(0,1) \cap H^1_0(0,1)]^2$. 

\item The adjoint operator $\A^*$  to $\A$ verifies 
\begin{align}\label{def-A-star}
	\A^* \begin{bmatrix} \zeta \\ \eta \end{bmatrix} = \begin{bmatrix}  - \zeta_{xx} -a \eta -  b \int_0^1 \eta    \\ 
		- \eta_{xx}    \end{bmatrix} 
\end{align}
with  $D(\A^*)= [H^2(0,1) \cap H^1_0(0,1)]^2$.

\item One can prove that the operator $(-\A, D(\A))$ or $(-\A^* , D(\A^*))$  generates an analytic semigroup in $[L^2(0,1)]^2$; we denote the associated semigroups by $\{e^{- t \A}\}_{t\geq 0}$ and $\{e^{-t\A^*}\}_{t\geq 0}$ respectively.

 \item The observation operator $\B^*: [L^2(0,1)]^2\to L^2(\omega)$ is defined by 
 \begin{align}\label{obs-op}
 	\B^* = \mathds{1}_\omega \begin{bmatrix} 1 & 0 \end{bmatrix} ,
 \end{align}
so that for any $\begin{bmatrix} \zeta \\ \eta \end{bmatrix} \in [L^2(0,1)]^2$, we have 
$\B^* \begin{bmatrix} \zeta \\ \eta \end{bmatrix} = \mathds{1}_\omega \zeta$.
\end{itemize}


\subsection{A first (non-)controllability result.}
We are in position to state the proof of \Cref{prop:negative}.

\begin{proof}[Proof of \Cref{prop:negative}]
1) To check the first claim, it suffices to check that the adjoint system (to \eqref{eq:sys_simplified})
\begin{equation}\label{eq:adj_simp_a0}
\begin{dcases}
			-\varphi_t - \varphi_{xx}  = b \int_{0}^{1} \psi &\text{in }   (0,T)\times (0,1)  , \\
			-\psi_t - \psi_{xx} = 0  &\text{in } (0,T)\times (0,1), \\
			\varphi = \psi = 0  &\text{on }  (0,T)\times \{0,1\}  , \\
			(\varphi,\psi)(T, \cdot) = (\varphi_T, \psi_T)  &\text{in } (0,1) ,
\end{dcases}
\end{equation}
is not observable for any time $T>0$. We argue by contradiction, we assume that \eqref{eq:adj_simp_a0} is observable, i.e., there is $C_{\textnormal{obs}}>0$ such that the following inequality holds
\begin{equation}\label{eq:obs_ineq_red}
\|\varphi(0,\cdot)\|^2_{L^2(0,1)}+\|\psi(0,\cdot)\|_{L^2(0,1)}^2\leq C_{\textnormal{obs}}\int_0^{T}\int_{\omega}|\varphi|^2,
\end{equation}
for all $(\varphi_T,\psi_T)\in [L^2(0,1)]^2$.

Now, the idea is to construct  a  final datum $(\vphi_T, \psi_T)\in [L^2(0,1)]^2$ for which \eqref{eq:obs_ineq_red} does not hold. 
In fact, we consider $\psi_T(x) = a_0 \phi_{2k_0}(x)$ for some $a_0\in \mathbb R^*$ and  $k_0\in \mathbb N^*$. 
%
%
Then we can express $\psi=\psi(t,x)$, the solution to the second equation of \eqref{eq:adj_simp_a0} as 
$$\psi(t,x)= \alpha_{2k_0}(t) \phi_{2k_0}(x) ,  \quad (t,x) \in (0,T), $$ where $\alpha_{2k_0}$ solves the ode
\begin{equation*}
-\alpha_{2k_0}^\prime+\lambda_{2k_0} \alpha_{2k_0}=0 \;\textnormal{in } (0,T), \quad \alpha_{2k_0}(T)=a_0.
\end{equation*}
Since $\alpha_{2k_0}(t)=e^{-\lambda_{2k_0}(T-t)}a_0$, we observe that
\begin{align*} 
\int_{0}^{1} \psi(t,\xi)\d{\xi} =\int_{0}^{1} e^{-\lambda_{2k_0}(T-t)}a_0 \phi_{2k_0}(\xi)\d{\xi} = 0  ,
\end{align*}
since $\phi_{2k_0}(\xi)= \sin(2k_0 \pi \xi)$ which has mean zero in $(0,1)$. 
%
Thus, we have constructed $\psi_T\in L^2(0,1)$,  $\psi_T\not\equiv 0$, such that the pair $(\varphi,\psi)$ solves
\begin{equation*}
\begin{dcases}
			-\varphi_t - \varphi_{xx}  = 0 &\text{in } (0,T)\times (0,1), \\
			-\psi_t - \psi_{xx} = 0  &\text{in } (0,T)\times (0,1) \\
			\varphi = \psi = 0  &\text{on } (0,T)\times \{0,1\}, \\
			(\varphi,\psi)(T, \cdot) = (\varphi_T, \psi_T)  &\text{in } (0,1), 
\end{dcases}
\end{equation*}
regardless of the choice of $T>0$, the  datum $\varphi_T$, and the coefficient $b$. Clearly, \eqref{eq:obs_ineq_red} cannot hold in such case and this yields a contradiction. 

\vspace*{.1cm}

2) To prove this claim, by duality argument, it suffices to check that system \eqref{eq:adj_simp_a0} verifies the following unique continuation property
\begin{align}\label{eq:ucp}
\textnormal{If $\varphi=0$  in $(0,T) \times \omega$ $\Longrightarrow$ $(\varphi,\psi)= (0,0)$ in $(0,T)\times (0,1)$.}\tag{UCP}
\end{align}

To this end, fix $(\varphi_T,\psi_T)\in L^2(0,1)\times \mathcal H_o$ arbitrarily. If $\varphi=0$ in $(0,T)\times \omega$, from the first equation of \eqref{eq:adj_simp_a0} we have
\begin{equation}\label{eq:cons_ucp_1}
b\int_0^1 \psi(t,\xi)\d{\xi}=0 \ \textnormal{ in } \, (0,T) . 
\end{equation}
Since $\psi_T\in\mathcal H_o$, we can write it as $\psi_T(x)=\sum_{k\geq 1} c_k \phi_{2k-1}(x)$ for some real sequence $\{c_k\}_{k\geq 1}\in \ell^2$. In turn, the solution to the second equation of \eqref{eq:adj_simp_a0} can be expressed as 
\begin{equation}\label{eq:sol_exact_odd}
  \psi(t,x)=\sum_{k\geq 1}e^{-\lambda_{2k-1}(T-t)}c_k \phi_{2k-1}(x), \quad  \forall (t,x)\in [0,T]\times [0,1],
\end{equation}
 and thus we can rewrite \eqref{eq:cons_ucp_1} as 
\begin{equation}\label{eq:cons_ucp_2}
b\int_0^1 \sum_{k\geq 1}e^{-\lambda_{2k-1}(T-t)}c_k \phi_{2k-1}(\xi) \d{\xi}=0  \ \ \textnormal{in } \, (0,T).
\end{equation}
Using that $\|\phi_k\|_{L^\infty(0,1)}\leq 1$ for all $k\geq 1$ (recall \eqref{eq:eig_prob}), we see that for any fixed $t\in(0,T)$
\begin{align}\notag
\int_0^1 \sum_{k\geq 1}\left|e^{-\lambda_{2k-1}(T-t)}c_k \phi_{2k-1}(\xi)\right|d{\xi} 
&\leq \int_0^1 \bigg(\sum_{k\geq 1}|c_k|^2\bigg)^{1/2}\bigg(\sum_{k\geq 1}e^{-2\lambda_{2k-1}(T-t)}\bigg)^{1/2}\d{\xi} \\
&\leq  C \bigg(\sum_{\lambda \in \Lambda}e^{-2\lambda(T-t)}\bigg)^{1/2} \leq \frac{C^\prime}{(T-t)^{1/2}}<+\infty, \label{eq:bound_integ_sol}
\end{align}
uniformly with respect to $k$,  where we have used that $\{c_k\}_{k\geq 1}\in \ell^2$ and \cite[Proposition A.5.38]{Boy20} in the last line. In view of \eqref{eq:bound_integ_sol}, by Fubini--Tonelli theorem, we have that \eqref{eq:cons_ucp_2} can be further simplified to 
\begin{align}\label{eq:second_iden_ucp}
 \sum_{k\geq 1}e^{-\lambda_{2k-1}(T-t)} \frac{2 c_k }{(2k-1)\pi} =0   
,\quad \forall t\in(0,T) .
\end{align}
Now, observe that the sequence $\{\lambda_{2k-1}\}_{k\geq 1}$ satisfies the following uniform gap property: for all $k, l\geq 1$,  there is some $\rho_0>0$ such that, 
\begin{align}\label{spactral-gap}
	|\lambda_{2k-1} - \lambda_{2l-1}| \geq \rho_0 \left| (2k-1)^2 - (2l-1)^2 \right| , \ \ \text{if } k\neq l.
\end{align}
This gap condition ensures that there  exists a bi-orthogonal family $\{q_{2k-1}\}_{k\geq 1}\subset L^2(0,T)$ to the family $\{e^{-\lambda_{2k-1}(T- \cdot )}\}_{k\geq 1}$ (see e.g. \cite{Fattorini-Russell-1,Fattorini-Russell-2}), so that 
\begin{align*}
	\int_0^T q_{2l-1}(t) e^{-\lambda_{2k-1}(T-t)} \d t = \delta_{l,k}, \quad \forall l, k \geq 1  .
\end{align*}
Taking the inner product of $q_{2l-1}$ ($l\geq 1$)  with \eqref{eq:second_iden_ucp}  gives 
\begin{align*}
\bigg(\sum_{k\geq 1} e^{-\lambda_{2k-1} (T-\cdot)} \frac{2 c_k }{(2k-1)\pi}, \, q_{2l-1} \bigg)_{L^2(0,T)} = 0  \textnormal{ $\Longrightarrow$  $c_l = 0$  for  $l \in \mathbb N^*$}. 
\end{align*}
Thus, we have shown that $c_k=0$ for all $k\geq 1$, that is $\psi\equiv 0$ in $(0,T)\times (0,1)$ (from \eqref{eq:sol_exact_odd}) 
Once we have this, 
   the first equation of \eqref{eq:adj_simp_a0} verifies
\begin{equation}
-\varphi_t-\varphi_{xx}=0 \;\;  \text{in }\, (0,T)\times (0,1), \ \ \ \vphi(T, \cdot) = \vphi_T, \ \ \text{in } (0,1)
\end{equation}
with homogeneous Dirichlet boundary conditions, which together with the initial assumption that $\varphi=0$ in $(0,T)\times\omega$, allows us to use standard unique continuation properties for the heat equation (for instance, Holmgren's Uniqueness Theorem, see \cite{Hor90}) to deduce that $\varphi=0$ in $(0,T)\times (0,1)$.  This completes the proof of point 2).

\vspace*{.1cm}

3)  To see that system \eqref{eq:sys_simplified} is  not approximately controllable in $L^2(0,1)\times \mathcal H_e$,  we will show  that \eqref{eq:ucp} does not hold. 

Let us consider $\psi_T(x) = c_0 \phi_{2m_0}(x)$ for some $c_0 \in \mathbb R^*$ and $m_0\in \mathbb N^*$ so that $\psi_T \in \mathcal H_e$ and $\psi_T \not\equiv 0$. With this in hand, the solution to the second equation of \eqref{eq:adj_simp_a0}  is of the form 
\begin{align*}
	\psi(t,x) = c_0 e^{-\lambda_{2m_0}(T-t)} \phi_{2m_0}(x), \quad \forall (t,x) \in (0,T)\times (0,1).  
\end{align*}
In particular, it verifies
\begin{align*}
	b \int_0^1 \psi(t,\xi) \d \xi = 0 ,
\end{align*}
since $\phi_{2m_0}\in \Phi_e$ (see \eqref{constr-Phi}--\eqref{mean-zero-Phi-e}).  

Therefore, the  first equation boils down to
\begin{align*}
	-\vphi_t - \vphi_{xx} = 0 , \, \text{ in } \, (0,T)\times (0,1)
\end{align*}
with homogeneous Dirichlet boundary conditions, and the solution of which is simply $\vphi = 0$ in $(0,T)\times (0,1)$ as soon as $\vphi_T=0$.   

Thus, we have shown that there is some non-trivial final data $(\vphi_T, \psi_T)= (0, c_0 \phi_{2m_0}(\cdot)) \in L^2(0,1)\times \mathcal H_e$ (consequently, a non-trivial solution $(\vphi, \psi)$) for the system \eqref{eq:adj_simp_a0}, which verifies $\vphi=0$ in $(0,T)\times \omega$. This is against the Unique Continuation Property. Indeed, with the choice of data $\psi_T$ made above, the equations of $(\vphi, \psi)$ are actually decoupled and that the first equation of \eqref{eq:adj_simp_a0} never sees the action of $\psi$.  

The proof is complete.  
\end{proof}

%
%
%

\subsection{Improving the result.} The crucial part in this section is to find good spectral properties of the associated adjoint system. To this end, we write the eigenvalue problem for $\A^*$ (introduced in \eqref{def-A-star}), given  by
\begin{equation}\label{eigen-eq}
	\begin{dcases}
	- \zeta_{xx}  - a \eta -  b \int_{0}^{1} \eta = \lambda \zeta &\text{in }   (0,1) , \\
		 - \eta_{xx} = \lambda \eta   &\text{in }  (0,1), \\
		\zeta(0) = \zeta(1) = 0  , \\
		\eta(0) = \eta(1) = 0 .
	\end{dcases}
\end{equation}
The set of eigenfunctions, denoted by $\{\Phi_k\}_{k\geq 1}$, are given by 
\begin{align}\label{eigenfunc}
	\Phi_{k}(x) =  \begin{pmatrix} \phi_k(x) \\ 0 \end{pmatrix} \quad \text{with $\phi_k(x)=\sin(k\pi x)$}, \quad \forall x\in (0,1),  
\end{align}
associated to the set of eigenvalues $\{\lambda_k\}_{k\geq 1}$ with $\lambda_k = k^2\pi^2$.


 Let us   look for the generalized eigenfunctions of the corresponding operator. The associated problem reads as: find the pair $(\widetilde \zeta_k, \widetilde \eta_k)$ for each $k\geq 1$, such that 
 \begin{equation}\label{gena-eigen-eq}
 	\begin{dcases}
 		- \widetilde \zeta_{k,xx}  - a\widetilde \eta_{k} -  b \int_{0}^{1} \widetilde \eta_k = k^2\pi^2 \widetilde \zeta_k + \sin(k\pi x)   &\text{in }   (0,1) , \\
 		- \widetilde \eta_{k,xx} = k^2\pi^2 \widetilde \eta_k    &\text{in }  (0,1), \\
 		\widetilde \zeta_k(0) = 	\widetilde \zeta_k(1) = 0  , \\
 			\widetilde \eta_k(0) = 	\widetilde \eta_k(1) = 0 .
 	\end{dcases}
 \end{equation}
Clearly,
\begin{align}\label{func-tilde-eta}
	\widetilde \eta_k(x) = A_k \sin(k\pi x), \quad 
\forall x\in (0,1),  \ \ \forall k\geq 1
\end{align}
with a real constants $A_k$, solve the second equation of \eqref{gena-eigen-eq}. 
Thus the problem reduces to find all $\widetilde \zeta_k$ verifying
\begin{align}\label{single-eq-phi}
&	\widetilde \zeta_{k,xx} + k^2\pi^2 \widetilde \zeta_k = - (a A_k +1) \sin(k\pi x) - \frac{2bA_k}{k\pi} \sin^2\left(\frac{k\pi}{2}\right)  , \quad \text{in } (0,1),\\ 
& \text{with } \ \widetilde \zeta_k(0) = \widetilde \zeta_k(1) = 0, \qquad \forall k\geq 1 . \label{boundary-condition}
	\end{align} 
By means of equation \eqref{single-eq-phi} and the condition $\widetilde \zeta_k(0)=0$, one may consider  the solution to \eqref{single-eq-phi} of the form 
\begin{align}
	\widetilde \zeta_k(x) = \left[\frac{(aA_k +1)x}{2k\pi} + \frac{2bA_k}{k^3\pi^3}\sin^2\left( \frac{k\pi}{2}\right) \right] \cos(k\pi x)  + B_k \sin(k\pi x)  - \frac{2bA_k}{k^3\pi^3} \sin^2\left(\frac{k\pi}{2}\right), 
\end{align} 
for all $x\in (0,1)$ and $k\geq 1$ where $B_k$  are real constants. 

Using the condition $\widetilde \zeta_k(1)=0$, we then have 
\begin{align}\label{def-A-k}
	A_k  = \frac{(-1)^{k+1}}{(-1)^k a    - \frac{8b}{k^2\pi^2} \sin^4\left( \frac{k\pi}{2} \right)  } , \quad \forall k\geq 1.
\end{align}
Precisely, one can observe that
\begin{align}\label{def-A-k-2}
	A_k = \begin{dcases}
		-\frac{1}{a}, \  & \text{for $k$ even}, \\   
		-\frac{1}{a+ \frac{8b}{k^2\pi^2}}, \  & \text{for $k$ odd}.
	\end{dcases}
\end{align}
At this point, we need the assumption \eqref{assump-1} to ensure that $a+\frac{8b}{k^2\pi^2} \neq 0$ for any $k$ odd.  

In fact,  for $k$ even it is enough to consider $B_k=0$ which trivially solves the equation \eqref{single-eq-phi}.   Thus, it is reasonable to consider the following solutions to \eqref{single-eq-phi}, given by 
\begin{align}\label{func-tilde-zeta}
	\widetilde \zeta_k(x)=
	\begin{dcases}
	  0,  \quad \text{for $k$ even},\\
	 \frac{2b(2x-1)}{k\pi(ak^2\pi^2 + 8b)} \cos(k\pi x) + B_k \sin (k\pi x) +  \frac{2b}{k\pi(ak^2\pi^2 + 8b)}, \quad \text{for $k$ odd},
	\end{dcases}
\end{align}
and for all $x\in (0,1)$, with real constants $B_k$ for  $k$ odd. 

Therefore, the set of generalized functions, denoted by $\{ \widetilde \Phi_k\}_{k\geq 1}$, are
\begin{align}\label{gena-eigen-func}
	\widetilde \Phi_k(x) = \begin{pmatrix}
		\widetilde \zeta_k(x) \\ \widetilde \eta_k(x) 
	\end{pmatrix} , \quad \forall x\in (0,1), \ \ \forall k\geq 1,
\end{align}
where $\widetilde \zeta_k$ and $\widetilde \eta_k$ are given by \eqref{func-tilde-zeta} and \eqref{func-tilde-eta} respectively with $A_k$ are given by \eqref{def-A-k-2}.  

\vspace*{.1cm}

Moreover,  the set of (generalized) eigenfunctions $\{ \Phi_k, \widetilde \Phi_k\}_{k\geq 1}$ forms a {\em complete family} in $[L^2(0,1)]^2$, where $\Phi_k$ and $\widetilde \Phi_k$ are respectively given by \eqref{eigenfunc} and \eqref{gena-eigen-func}.

\vspace*{.1cm}

Now, we are in position to prove the required controllability results in \Cref{prop:improved}.  We start with the first part.

\begin{proof}[Proof of Theorem \ref{prop:improved} - part 1)]
	
 {\em Approximate controllability.} Recall the definition of observation operator $\B^*$ from \eqref{obs-op} and that for each eigenfunction $\Phi_k$, we have $\B^*\Phi_k (x) = \mathds{1}_\omega \sin(k\pi x)$, which cannot identically vanish in $\omega$. Thus, by using Fattorini-Hautus criterion (see \cite{FATTORINI-MAIN}, \cite{OLIVE-GUILLAUME}), the system \eqref{eq:sys_simplified} is {\em approximately controllable} under the assumption $a, b\neq 0$. 
 \end{proof}

To prove the part 2) of \Cref{prop:improved}, we need further investigation on the observation terms. In fact, we have  the following result.

\begin{lemma}\label{Lemma-obs}
	There exists some constant $\rho_1>0$, independent in $k\in \mathbb N^*$, such that for each $k\geq 1$, the observation terms satisfy
	\begin{align*}
		\|\B^* \Phi_k \|_{L^2(\omega)} \geq \rho_1.
	\end{align*} 
\end{lemma}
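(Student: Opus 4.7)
The plan is elementary: the quantity to bound from below is
\[
\|\mathcal{B}^*\Phi_k\|_{L^2(\omega)}^2 = \int_\omega \sin^2(k\pi x)\,\mathrm{d}x,
\]
so the task reduces to a uniform (in $k$) lower bound on $\int_\omega \sin^2(k\pi x)\,\mathrm{d}x$ for an arbitrary nonempty open set $\omega\subset(0,1)$.

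First, I would fix a nontrivial open interval $(\alpha,\beta)\subset\omega$ (possible since $\omega$ is nonempty and open), so that it suffices to bound $\int_\alpha^\beta \sin^2(k\pi x)\,\mathrm{d}x$ from below. Using the standard identity $\sin^2(k\pi x)=\frac{1}{2}(1-\cos(2k\pi x))$ and integrating explicitly, I get
\[
\int_\alpha^\beta \sin^2(k\pi x)\,\mathrm{d}x = \frac{\beta-\alpha}{2} - \frac{\sin(2k\pi\beta)-\sin(2k\pi\alpha)}{4k\pi} \geq \frac{\beta-\alpha}{2} - \frac{1}{2k\pi}.
\]

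Next, choose $k_0\in\mathbb{N}^*$ large enough so that $\frac{1}{2k_0\pi} \leq \frac{\beta-\alpha}{4}$; then for every $k\geq k_0$ one has the uniform lower bound $\int_\omega \sin^2(k\pi x)\,\mathrm{d}x \geq \frac{\beta-\alpha}{4}$. For the finitely many remaining indices $k\in\{1,\dots,k_0-1\}$, each integral $\int_\omega\sin^2(k\pi x)\,\mathrm{d}x$ is strictly positive (since $\sin(k\pi\,\cdot\,)$ is a nonzero real-analytic function and $\omega$ has positive Lebesgue measure), hence admits a positive minimum. Taking the smaller of that minimum and $\sqrt{(\beta-\alpha)/4}$ produces the desired constant $\rho_1>0$ independent of $k$.

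I do not expect a real obstacle here: the statement is a straightforward observation about $L^2$-masses of Dirichlet eigenfunctions on any fixed open subset, and the only subtlety is the split between large and small $k$ (large $k$ handled by the oscillatory cancellation above, small $k$ by a finite minimum argument). No appeal to the assumption \eqref{assump-1} or to the structure of the coupled system is needed for this particular lemma.
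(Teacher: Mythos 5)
Your proposal is correct and follows essentially the same route as the paper: restrict to a subinterval of $\omega$, compute $\int \sin^2(k\pi x)\,\mathrm{d}x$ explicitly, absorb the $O(1/k)$ oscillatory term for large $k$, and handle the finitely many small $k$ by positivity. (Your sign on the $\sin$ difference is in fact the correct one; the paper's displayed formula has a harmless sign typo there.)
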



\begin{proof} 


 Since  $\B^*\Phi_k\neq 0$ for all $k\geq 1$, it is enough to find the lower bounds for large $k$. 
 Let   $(r_1, r_2)\subset \omega$ (for some $0<r_1<r_2<1$)  be a connected component of $\omega$. 
Then we find  
\begin{align*}
\int_{r_1}^{r_2} \sin^2(k\pi x) \dx &= \frac{1}{2} \int_{r_1}^{r_2}  \left( 1-\cos(2k\pi x) \right) \dx \\
&= \frac{1}{2}(r_2 - r_1) + \frac{1}{4k\pi} \left( \sin(2k\pi r_2) - \sin(2k\pi r_1)      \right) .
\end{align*}
As a matter of fact, there exists some constant $c_0>0$ such that
\begin{align}\label{obs-est}
	\|\B^* \Phi_k\|^2_{L^2(\omega)} \geq c_0 (r_2-r_1)>0, \quad \text{for large enough }k\geq 1,
\end{align}
and the lemma follows. 
\end{proof}

We are in position to prove the second part of Theorem \ref{prop:improved}.  

\begin{proof}[Proof of Theorem \ref{prop:improved} - part 2)]

 {\em Null-controllability.} Let us first write the equivalent formulation of the null-control problem.  The system \eqref{eq:sys_simplified} is null-controllable at time $T>0$ if and only if for any given $(\vphi_T, \psi_T)\in [L^2(0,1)]^2$, there exists a control $u\in L^2((0,T)\times \omega)$ such that the following identity holds:
%
%
\begin{align}\label{control-equation}
	-\left( \begin{bmatrix} y_0 \\ z_0 \end{bmatrix},  e^{-T\A^*} \begin{bmatrix} \vphi_T \\ \psi_T \end{bmatrix} \right)_{[L^2(0,1)]^2}  = \int_0^T \left( u(t,\cdot),  \B^* 
	  e^{-(T-t) \A^*} \begin{bmatrix} \vphi_T \\ \psi_T \end{bmatrix} \right)_{[L^2(0,1)]^2} \dt  , 
\end{align}
where $\B^*$ has been introduced in \eqref{obs-op}. 
	
Now, recall that $\lbrace \Phi_{k}, \widetilde \Phi_{k} \rbrace_{k\geq  1}$ (defined by  \eqref{eigenfunc}-\eqref{gena-eigen-func}) forms a complete family in $[L^2(0,1)]^2$, so it is enough to check the controllability equation  \eqref{control-equation} 
 for $\Phi_{k}$ and $\widetilde \Phi_{k}$ for each $k\geq 1$. This indeed tells us that  for any $(y_0,z_0) \in [L^2(0,1)]^2$,  the input
$u \in L^2((0,T)\times \omega)$ is a null-control for \eqref{eq:sys_simplified}  if and only if we have
\begin{equation}\label{moment-problm} 
	\begin{dcases}
		-e^{-T\lambda_k} \big( y_{0}, \phi_k \big)_{L^2(0,1)} = \int_0^T\int_\omega  u(t,x)e^{-\lambda_{k} (T-t)} \phi_k(x) \dx\dt, \quad  &\forall k\geq 1, \\ 
		-e^{-T\lambda_{k}}\left( \big( y_{0}, \widetilde \zeta_k - T \phi_k \big)_{L^2(0,1)} + \big( z_{0}, \widetilde \eta_k  \big)_{L^2(0,1)}\right)  \\
			\qquad \qquad \qquad \qquad \quad \ \  = \int_0^T\int_\omega u(t,x)e^{-\lambda_{k} (T-t)}\left(\widetilde \zeta_k(x) - (T-t) \phi_k(x)  \right) \dx\dt, \quad   &\forall k\geq 1,
	\end{dcases}
\end{equation}
where we have used the formulation of  $\B^*$  (given by \eqref{obs-op}) and  the fact that 
\begin{align*}
e^{-t\A^*} \Phi_k = e^{-t\lambda_k} \Phi_k , \ \ \ e^{-t\A^*} \widetilde{\Phi}_{k} = e^{-t\lambda_k} \big( \widetilde \Phi_k - t \Phi_k   \big), \quad \forall t\in [0,T] .   
\end{align*}


The  set of equations \eqref{moment-problm} is the moments problem in our case and we solve it in the following steps.   

\begin{itemize}
	\item {\em Existence of bi-orthogonal family.} Observe that, the set of eigenvalues $\{\lambda_k\}_{k\geq 1}$ of the associated adjoint operator $\A^*$ to the system \eqref{eq:sys_simplified} verifies the uniform gap property: \begin{align}
		|\lambda_k - \lambda_n| \geq c_1(k^2-n^2), \quad \forall k\neq n , \ k,n\geq 1,
	\end{align}
	with some constant $c_1>0$ that does not depend on $k, n$. 
	
	Thus, by means of \cite[Theorem 1.2]{Ammar-Khodja-JMPA} (see Theorem \ref{Thm-biortho} in the present paper), there exists bi-orthogonal family $\{q_{k,j}\}_{k\geq 1, j=0,1}\subset L^2(0,T)$ to the family of exponential functions $\{(T-\cdot)^j e^{-\lambda_k (T-\cdot)}\}_{k\geq 1, j=0,1}$, that is 
	\begin{align}
		\int_0^T q_{k,j} (T-t)^i e^{-\lambda_l(T-t)} \dt = \delta_{k,l} \delta_{j,i}, \quad \forall k,l \geq 1, \ j=0,1. 
	\end{align}
In addition, for any given $\veps>0$, there exists a constant $C(\veps, T)>0$ such that  
\begin{align}\label{bound-biortho}
	\|q_{k,j}\|_{L^2(0,T)} \leq C(\veps, T) e^{\veps \lambda_k}, \quad \forall k\geq 1, \ j=0,1.
\end{align}

\item {\em Construction of a control.}  We now consider
\begin{align}\label{func-u}
	u(t,x) = \sum_{k\geq 1} u_{k}(t,x), \quad \forall (t,x)\in (0,T)\times \omega , \ \  \text{ where}
\end{align}
\begin{align}\label{func-u-k}
	u_k(t,x) = &-\frac{e^{-T\lambda_k}}{\|\B^* \Phi_k\|^2_{L^2(\omega)}} (y_0, \phi_k)_{L^2(0,1)} \mathds{1}_\omega \phi_k(x) q_{k,0}(t) \notag  \\
	& 
+\frac{e^{-T\lambda_{k}}}{\|\B^*\Phi_k\|^2_{L^2(\omega)}}\left( \big( y_{0}, \widetilde \zeta_k - T \phi_k \big)_{L^2(0,1)} + \big( z_{0}, \widetilde \eta_k  \big)_{L^2(0,1)}\right) \mathds{1}_\omega \phi_k(x) q_{k,1}(t).
\end{align}
At this point, we recall that $\widetilde \zeta_k=0$ in $[0,1]$ for $k$ even and  for  $k$ odd, we have 
\begin{align*}
 \widetilde \zeta_k(x)=\frac{2b(2x-1)}{k\pi(ak^2\pi^2 + 8b)} \cos(k\pi x) + B_k \sin (k\pi x) +  \frac{2b}{k\pi(ak^2\pi^2 + 8b)}.
\end{align*}
 In above, one can choose the constants $B_k$ in  such a way that 
\begin{align}\label{constants-B-k} 
\int_\omega \widetilde \zeta_k(x) \phi_k(x) \dx =  \int_\omega \widetilde \zeta_k(x) \sin(k\pi x) \dx = 0, \ \ \forall k \text{ odd},
\end{align} 
which is possible since the coefficients of $B_k$ in \eqref{constants-B-k} are non-vanishing, as  $\int_\omega \sin^2(k\pi x)\dx\neq0$.

Then, it is not difficult to observe that the choice of $u(t,x)$ given by \eqref{func-u}--\eqref{func-u-k} with the fact \eqref{constants-B-k} solves the set of moments problem \eqref{moment-problm}.

\vspace{.1cm}
 
 \item {\em Bound of the control.} It remains to prove that  $u\in L^2((0,T)\times \omega)$.
 
 It is clear that the functions $\phi_k$ and $\widetilde \zeta_k$ are bounded in $L^2(0,1)$ uniformly w.r.t. $k\geq 1$. This, together with the lower bounds of $\|\B^*\Phi_k\|_{L^2(\omega)}$ from Lemma \ref{Lemma-obs} and the bounds of bi-orthogonal family in \eqref{bound-biortho}, we have 
 \begin{align*}
 	\|u\|_{L^2((0,T)\times \omega)} &\leq C \sum_{k\geq 1} e^{-T\lambda_k} \|q_{k,j}\|_{L^2(0,T)} \|(y_0, z_0)\|_{[L^2(0,1)]^2} \\
 	& \leq
 	 C(\veps, T) \sum_{k\geq 1} e^{-T\lambda_k} e^{\veps \lambda_k} \|(y_0, z_0)\|_{[L^2(0,1)]^2} \\
 &  \leq C(T) \sum_{k\geq 1} e^{-\frac{T}{2}\lambda_k} 
  \|(y_0, z_0)\|_{[L^2(0,1)]^2} 	, \ \ \text{for } \veps = \frac{T}{2} , \\
&  \leq C(T) 
\|(y_0, z_0)\|_{[L^2(0,1)]^2}    .
 \end{align*}
\end{itemize} 

This completes the proof of null-controllability, that is the second part of Theorem \ref{prop:improved}.  
\end{proof}

	\appendix

	\section{A parabolic regularity result}
	
	\begin{lemma}\label{Lemma-higher-regul}
	  Let $y_0 \in H^3(\Omega) \cap H^1_0(\Omega)$ and $g\in L^2(0,T; H^2(\Omega))$  with $g_t \in L^2(Q_T)$. Then, the solution $y$ to the following equations
	  \begin{equation*}
	  	\begin{dcases}
	  		y_t - \Delta y = g  & \text{ in } Q_T, \\
	  		y= 0               & \text{ on }  \Sigma_T, \\
	  		y(0, \cdot) = y_0 & \text{ in } \Omega,  
	  	\end{dcases}
	  \end{equation*}
	 satisfies the following estimate 
	\begin{equation*}
		\begin{aligned}
	& \|y\|_{\C^0([0,T]; H^3(\Omega) \cap H^1_0(\Omega) )}	+ \| y \|_{L^2(0,T ; H^4(\Omega))}  + \|y_t\|_{L^2(0,T; H^2(\Omega))} + \|y_{tt}\|_{L^2(Q_T)} \\
		& \leq C \left( \|y_0\|_{H^3(\Omega)\cap H^1_0(\Omega)} + \|g\|_{L^2(0,T; H^2(\Omega))} + \|g_t\|_{L^2(Q_T)}  \right), 
		\end{aligned}
	\end{equation*}   		
for some constant $C>0$. 
	\end{lemma}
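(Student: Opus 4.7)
The strategy is a bootstrap argument that alternates between standard $L^2$-parabolic maximal regularity and elliptic regularity for the Dirichlet Laplacian, gaining two spatial derivatives in two steps.

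First, I would invoke classical maximal regularity for the heat equation with $H^1_0$ initial data and $L^2$ source: under the weaker hypothesis $y_0\in H^1_0(\Omega)$ and $g\in L^2(Q_T)$ one has $y\in L^2(0,T; H^2(\Omega)\cap H^1_0(\Omega))\cap \mathcal C^0([0,T]; H^1_0(\Omega))$ with $y_t\in L^2(Q_T)$ and the corresponding estimate; this is exactly the level of regularity already used in Propositions~\ref{prop-control}--\ref{prop-adjoint}.

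Next I would differentiate in time: setting $w:=y_t$, the function $w$ formally solves
\begin{equation*}
w_t-\Delta w = g_t \ \text{ in } Q_T, \quad w=0 \ \text{ on } \Sigma_T, \quad w(0,\cdot)=\Delta y_0+g(0,\cdot) \ \text{ in } \Omega.
\end{equation*}
Since $y_0\in H^3(\Omega)\cap H^1_0(\Omega)$, we have $\Delta y_0\in H^1(\Omega)$, and the intermediate-derivative theorem applied to $g\in L^2(0,T;H^2(\Omega))\cap H^1(0,T;L^2(\Omega))$ ensures $g(0,\cdot)\in H^1(\Omega)$. Applying Step~1 to $w$ then produces $y_t\in L^2(0,T;H^2(\Omega)\cap H^1_0(\Omega))\cap \mathcal C^0([0,T];H^1_0(\Omega))$ and $y_{tt}\in L^2(Q_T)$, with the desired quantitative bound in terms of the norms of $y_0$, $g$, and $g_t$.

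Finally, I would upgrade the spatial regularity of $y$ by a pointwise-in-time elliptic bootstrap. Rewriting the equation as a Poisson problem in $x$ for each $t$,
\begin{equation*}
-\Delta y(t,\cdot)=g(t,\cdot)-y_t(t,\cdot) \ \text{ in } \Omega, \quad y(t,\cdot)=0 \ \text{ on } \partial\Omega,
\end{equation*}
the right-hand side lies in $L^2(0,T; H^2(\Omega))$ by Step~2 and by the hypothesis on $g$; classical elliptic regularity for the Dirichlet Laplacian then gives $y\in L^2(0,T; H^4(\Omega))$. The same pointwise elliptic estimate, combined with $g-y_t\in \mathcal C^0([0,T]; H^1(\Omega))$ (using $y_t\in \mathcal C^0(H^1_0)$ and the embedding $L^2(H^2)\cap H^1(L^2)\hookrightarrow \mathcal C^0(H^1)$ for $g$), yields the $\mathcal C^0([0,T]; H^3(\Omega)\cap H^1_0(\Omega))$ bound.

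\emph{Main obstacle.} The only subtle point is the compatibility condition $\Delta y_0+g(0,\cdot)\in H^1_0(\Omega)$ required in order to apply Step~1 to $w$; in full generality this must be verified by hand or absorbed via an approximation argument by smoother data satisfying it, but in the actual application to the weighted adjoint problem \eqref{Syst-modified-adjoint-2} the weight $\widetilde \rho$ vanishes at the endpoints and the induced ``initial'' data is identically zero, so compatibility is automatic. A minor additional point is that $H^4$-elliptic regularity requires $\Omega$ to be slightly smoother than $\mathcal C^2$, which is implicitly assumed.
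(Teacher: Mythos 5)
The paper never actually proves this lemma: it is stated in the appendix as a standard parabolic regularity fact and invoked without proof (to pass from system \eqref{Syst-modified-adjoint-2} to the estimate \eqref{weight-esti-2}), so there is no argument of the authors' to measure yours against. Your two-step bootstrap --- $L^2$ maximal regularity for $y$, then for $w=y_t$ after differentiating the equation in time, then a pointwise-in-time elliptic upgrade to get $L^2(0,T;H^4)$ and $\C^0([0,T];H^3)$ --- is the standard route, and with the caveats you state it is correct.

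Your ``main obstacle'' should, however, be promoted from a caveat to a missing hypothesis: without $\Delta y_0+g(0,\cdot)\in H^1_0(\Omega)$ the asserted estimate is genuinely false, so it cannot be ``absorbed via an approximation argument'' --- one cannot approximate one's way around an inequality that fails. Concretely, take $\Omega=(0,1)$, $g\equiv 0$, $y_0(x)=x(1-x)\in H^3(\Omega)\cap H^1_0(\Omega)$. Then $y_t=e^{t\Delta}\Delta y_0=e^{t\Delta}(-2)$, and the constant $-2$ has Dirichlet Fourier coefficients $|c_k|\sim 1/k$ for $k$ odd, so that $\int_0^T\|y_t(t)\|^2_{H^2(0,1)}\,\mathrm{d}t\gtrsim \sum_{k}\lambda_k|c_k|^2=+\infty$; the norm $\|y_t\|_{L^2(0,T;H^2)}$ on the left of the lemma is not even finite. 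The correct statement therefore needs the compatibility condition $\Delta y_0+g(0,\cdot)\in H^1_0(\Omega)$ (and, as you also note, $\partial\Omega$ smoother than the $\C^2$ assumed in the paper for the $H^4$ and $H^3$ elliptic steps). Your closing observation is the right repair: in the only place the lemma is used, system \eqref{Syst-modified-adjoint-2} is backward in time with zero final data and with weights $\widetilde\rho$, $\widetilde\rho_t$ vanishing at the endpoints, so the induced initial datum and the source vanish at the relevant endpoint and compatibility is automatic. In short, it is the appendix statement that needs amending, not your proof.
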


		\section{Existence of bi-orthogonal family to the family of exponentials}
	
	In this section, we recall  a result concerning the existence of bi-orthogonal family to the exponential family   from \cite{Ammar-Khodja-JMPA}, more precisely Theorem 1.2 from that paper. 
	
	\begin{theorem}\label{Thm-biortho}
		Let us fix $p\in \mathbb N^*$ and $T\in (0,\infty]$. Assume that $\{\Lambda_k\}_{k\geq 1}$ is a sequence of complex numbers such that 
		\begin{align*}
			\begin{dcases} 
				\re (\Lambda_k) \geq \delta |\Lambda_k|, \ \  |\Lambda_k - \Lambda_l| \geq \rho|k-l|, \ \ \forall k\neq l, \ k,l\geq 1 , \\
				\sum_{k\geq 1} \frac{1}{|\Lambda_k|} < \infty ,
			\end{dcases}
		\end{align*}
		for two positive constants $\delta$ and $\rho$. Then, there exists a family $\{q_{k,j}\}_{k\geq 1, 0\leq j \leq p-1} \subset L^2(0,T; \mathbb C)$ bi-orthogonal to $\{t^j e^{-\Lambda_k t}\}_{k\geq 1, 0\leq j\leq p-1}$, that is 
		\begin{align*}
			\int_0^T   t^j e^{-\Lambda_k t} \, \overline{q_{l,i}(t)} \,  \dt = \delta_{k,l} \delta_{j,i}, \quad \forall k,l\geq 1, \ 0\leq i, j \leq p-1 .
		\end{align*} 
		In addition, for any $\veps>0$ there exists a positive constant $C(\veps, T)$ for which 
		\begin{align*}
			\|q_{k,j}\|_{L^2(0,T; \mathbb C)} \leq C(\veps, T) e^{\veps \re(\Lambda_k)}, \quad  \forall k,l\geq 1, \ 0\leq i, j \leq p-1 .
		\end{align*}
	\end{theorem}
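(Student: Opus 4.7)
The strategy follows the classical Fattorini--Russell approach, combining a Weierstrass-type construction in complex analysis with the Paley--Wiener theorem. I describe it for $p=1$ first and then indicate the modifications needed for general $p$.

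The first step is to construct a Weierstrass product
$$E(z) := \prod_{k\geq 1}\left(1-\frac{z}{\Lambda_k}\right),$$
whose absolute convergence is guaranteed by $\sum_k |\Lambda_k|^{-1}<+\infty$. This makes $E$ entire of order at most one and, by the same hypothesis, of arbitrarily small exponential type, so that for every $\veps>0$ we have $|E(z)|\leq C_\veps e^{\veps|z|}$ on $\mathbb{C}$. The sector condition $\re\Lambda_k\geq \delta|\Lambda_k|$ together with the gap condition $|\Lambda_k-\Lambda_l|\geq \rho|k-l|$ furnish matching lower bounds $|E'(\Lambda_k)|\geq c_\veps\,e^{-\veps\,\re \Lambda_k}$, and via a maximum-modulus argument on discs of radius $\rho/2$, polynomial lower bounds $|E(z)|\gtrsim |z-\Lambda_k|\,e^{-\veps\,\re \Lambda_k}$ near each zero. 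For each $k$ set
$$M_k(z) := \frac{E(z)}{E'(\Lambda_k)\,(z-\Lambda_k)},$$
which is an entire function satisfying $M_k(\Lambda_l)=\delta_{kl}$ for every $l\geq 1$.

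Next I would mollify $M_k$ so that its restriction to the imaginary axis becomes square-integrable and its exponential type remains at most $T$. Fix $\veps>0$ and pick an entire function $\sigma_\veps$ of exponential type $T/2$ satisfying $|\sigma_\veps(z)|\leq C_\veps e^{\veps|z|}$ on $\mathbb{C}$ and $|\sigma_\veps(iy)|\leq C(1+y^2)^{-1}$ on $i\mathbb{R}$ (for instance the square of a Beurling mollifier). Define
$$G_k(z) := M_k(z)\,\frac{\sigma_\veps(z)}{\sigma_\veps(\Lambda_k)}\,e^{-Tz/2}.$$
Then $G_k$ still interpolates $G_k(\Lambda_l)=\delta_{kl}$, has exponential type $\leq T$, and lies in $L^2(i\mathbb{R})$. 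By the Paley--Wiener theorem there exists $g_k\in L^2(-T/2,T/2;\mathbb{C})$ whose two-sided Laplace transform is $G_k$; setting $q_{k,0}(t):=\overline{g_k(T/2-t)}$ produces a function in $L^2(0,T;\mathbb{C})$ bi-orthogonal to $\{e^{-\Lambda_l\,\cdot}\}_l$ on $(0,T)$. Plancherel's identity then yields
$$\|q_{k,0}\|_{L^2(0,T)}^2 = \frac{1}{2\pi}\int_{\mathbb{R}}|G_k(iy)|^2\,\d y \leq C(\veps,T)\,e^{2\veps\,\re \Lambda_k},$$
where the exponential prefactor comes entirely from the lower bounds $|\sigma_\veps(\Lambda_k)|\geq c\,e^{-\veps\,\re \Lambda_k}$ and $|E'(\Lambda_k)|\geq c_\veps\,e^{-\veps\,\re \Lambda_k}$, after redefining $\veps$.

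For general $p\geq 2$, the scheme is lifted by replacing $E$ with $E^p$ and setting
$$G_{k,j}(z) := \frac{E(z)^p}{(z-\Lambda_k)^p}\,P_{k,j}(z)\,\frac{\sigma_\veps(z)}{\sigma_\veps(\Lambda_k)^p}\,e^{-Tz/2},$$
where $P_{k,j}$ is a polynomial of degree $\leq p-1$ chosen to solve the lower-triangular $p\times p$ system that enforces $(-1)^i G_{k,j}^{(i)}(\Lambda_k) = \delta_{ji}$ for $0\leq i\leq p-1$; all other interpolation identities at $\Lambda_l$ ($l\neq k$) are automatic thanks to the $p$-fold zero of $E(z)^p$ there. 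The norm estimate propagates with an $\veps$-independent $p$-dependent constant. The main obstacle is the quantitative lower-bound analysis of the Weierstrass product: obtaining $|E(z)|^{-1}\leq C_\veps\,e^{\veps\,\re \Lambda_k}$ uniformly in suitable neighborhoods of each $\Lambda_k$ requires both the sector hypothesis, which confines all $\Lambda_k$ to a proper cone (so that a Carleman-type harmonic-measure argument applies), and the gap hypothesis, which ensures non-overlapping discs of comparable size around each zero. Once these bounds are in hand, the remaining difficulty is bookkeeping the various occurrences of $\veps$ in $\sigma_\veps$, in the estimates on $E$, and in the final norm bound so that a single $\veps$ survives in the statement.
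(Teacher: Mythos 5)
First, a point of comparison: the paper does not prove \Cref{Thm-biortho} at all. It is recalled verbatim from \cite{Ammar-Khodja-JMPA} (Theorem 1.2 there) and used as a black box in the proof of \Cref{prop:improved}, so there is no in-paper argument to measure your proposal against; the relevant benchmark is the proof in that reference. Your outline follows the same classical Fattorini--Russell scheme that underlies it: a Weierstrass product $E$ vanishing exactly on $\{\Lambda_k\}$, the interpolating multipliers $E(z)/\bigl(E'(\Lambda_k)(z-\Lambda_k)\bigr)$, a Beurling-type mollifier to gain square-integrability on the imaginary axis while keeping the exponential type under control, and Paley--Wiener plus Plancherel to produce the biorthogonal family with the $e^{\veps \re(\Lambda_k)}$ bound. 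So the roadmap is the right one.

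As a proof, however, it is incomplete precisely where the work lies. The two estimates $|E(z)|\leq C_\veps e^{\veps|z|}$ and, above all, $|E'(\Lambda_k)|\geq c_\veps e^{-\veps\re(\Lambda_k)}$ uniformly in $k$ are asserted rather than derived; you flag the latter yourself as ``the main obstacle'', but without it the final bound $\|q_{k,j}\|_{L^2(0,T)}\leq C(\veps,T)e^{\veps\re(\Lambda_k)}$ --- the only part of the statement that makes the moments method work, since it must be beaten by $e^{-T\lambda_k}$ --- is not established. This lower bound is exactly where the sector condition $\re(\Lambda_k)\geq\delta|\Lambda_k|$, the gap condition, and the summability hypothesis interact, and it occupies the bulk of the proof in the cited reference. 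Two further points need repair. (i) Exponential-type bookkeeping: with $M_k$ of type $\veps$, $\sigma_\veps$ of type $T/2$ and the factor $e^{-Tz/2}$ of type $T/2$, your $G_k$ has type up to $T+\veps$, so Paley--Wiener only yields support in an interval of length roughly $2T$, not $T$; the types must be halved, and the case $T=\infty$, which the statement allows, must be treated separately (there the mollifier is unnecessary but $L^2$ decay on the imaginary axis still has to be checked). (ii) For $p\geq 2$ the triangular interpolation system for $P_{k,j}$ is indeed solvable, but its solution involves $E'(\Lambda_k)^{-p}$ and higher derivatives of $E$ at $\Lambda_k$, so the uniform lower-bound analysis has to be extended to those quantities as well. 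In short: same strategy as the source the paper quotes, but the quantitative heart of the argument is missing.
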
   
	For more details and recent results  about the existence bi-orthogonal family to the  exponentials can be found for instance in \cite{AB-FB,Boy20}.

\bibliographystyle{siam}
\bibliography{references_nonlocal}

\bigskip \bigskip \bigskip 
					
\begin{flushleft}
\textbf{Kuntal Bhandari}\\
Institute of Mathematics\\
Czech Academy of Sciences\\
\v{Z}itn\'a 25, 11567 Praha 1\\
Czech Republic\\
\texttt{bhandari@math.cas.cz} 

\bigskip
\bigskip

\textbf{Víctor Hernández-Santamaría }\\
Instituto de Matemáticas\\
Universidad Nacional Autónoma de México \\
Circuito Exterior, Ciudad Universitaria\\
04510 Coyoacán, Ciudad de México, Mexico\\
\texttt{victor.santamaria@im.unam.mx}\\
\end{flushleft}

\end{document}